\renewcommand{\b}{\beta}
\newcommand{\calG}{\mathcal G}
\renewcommand{\L}{\Lambda}
\newcommand{\PSL}{{\rm PSL}}
\newcommand{\ga}{\gamma}
\newcommand{\deH}{\partial \mathbb H}
\renewcommand{\P}{{\rm P}}
\newcommand{\p}{{\rm p}}
\newcommand{\wt}{\widetilde}
\newcommand{\de}{\partial}
\newcommand{\calK}{\mathcal K}
\newcommand{\calX}{\mathcal X}
\newcommand{\bR}{\mathbf R}
\newcommand{\g}{\gamma}
\renewcommand{\l}{\lambda}
\newcommand{\G}{\Gamma}
\newcommand{\R}{\mathbb R}
\newcommand{\Z}{\mathbb Z}
\renewcommand{\P}{\mathbb P}
\renewcommand{\H}{\mathbb H}
\newcommand{\F}{\mathbb F}
\newcommand{\N}{\mathbb N}
\newcommand{\calC}{\mathcal C}
\newcommand{\SL}{{\rm SL}}
\renewcommand{\l}{\lambda}
\newcommand{\Cc}{\mathcal C}
\newcommand{\Ee}{\mathcal E}
\newcommand{\Gg}{\mathcal G}
\newcommand{\Xx}{\mathcal X}
\newcommand{\calL}{\mathcal L}
\newcommand{\Rr}{\mathcal R}
\newcommand{\Vv}{\mathcal V}
\newcommand{\MLc}{\mathcal{ML}_c}
\newcommand{\Sp}{{\rm Sp}}
\newcommand{\<}{\langle}
\renewcommand{\>}{\rangle}
\newcommand{\Syst}{\mathrm{Syst}}
\newcommandx{\ioo}[2]{I_{(#1,#2)}}
\newcommandx{\ioc}[2]{I_{(#1,#2]}}
\newcommandx{\ico}[2]{I_{[#1,#2)}}
\newcommandx{\icc}[2]{I_{[#1,#2]}}
\newcommand{\supp}{\mathrm{supp}}
\newcommand{\carrier}{\mathrm{carr}}
\newcommand{\CcK}{\mathcal C_{cc}^K}
\newcommand{\Ccc}{\mathcal C_{cc}}
\newcommand{\bq}{\begin{equation}}
\newcommand{\eq}{\end{equation}}
\newcommand{\bqn}{\begin{equation*}}
\newcommand{\eqn}{\end{equation*}}
\newcommand{\ba}{\begin{aligned}}
\newcommand{\ea}{\end{aligned}}
\newcommand{\be}{\begin{enumerate}}
\newcommand{\ee}{\end{enumerate}}
\newcommand{\bei}{\begin{itemize}}
\newcommand{\eei}{\end{itemize}}
\newcommand{\bsm}{\left(\begin{smallmatrix}}
\newcommand{\esm}{\end{smallmatrix}\right)}                   
\newcommand{\bpm}{\begin{pmatrix}}
\newcommand{\epm}{\end{pmatrix}}
\newcommand{\barr}{\begin{displaymath}\begin{array}{cccc}}
\newcommand{\earr}{\end{array}\end{displaymath}}
\newcommand{\barrl}{\begin{displaymath}\begin{array}{lcl}}
\newcommand{\earrl}{\end{array}\end{displaymath}}
\newcommand{\barl}{\begin{displaymath}\begin{array}{l}}
\newcommand{\earl}{\end{array}\end{displaymath}}
\newcommand{\bxym}{ \begin{displaymath}\xymatrix }
\newcommand{\exym}{\end{displaymath}}
\theoremstyle{plain}
\newtheorem{thm}{Theorem}[section]
\newtheorem{lem}[thm]{Lemma}
\newtheorem{prop}[thm]{Proposition}
\newtheorem{cor}[thm]{Corollary}
\newtheorem*{teo*}{Theorem}
\newtheorem{thm?}{Theorem}[section]
\theoremstyle{definition}
\newtheorem{example}[thm]{Example}
\newtheorem{defn}[thm]{Definition}
\newtheorem{remark}[thm]{Remark}
\newtheorem*{rem}{Remark}
\numberwithin{equation}{section}
\newcommand{\thismonth}{\ifcase\month 
  \or January\or February\or March\or April\or May\or June%
  \or July\or August\or September\or October\or November%
  \or December\fi}
\date{\today}
\begin{document}
\title{}

\title[A structure theorem for geodesic currents]%
      {A structure theorem for geodesic currents and length spectrum compactifications}
  
\author[]{M. Burger}
\address{Department Mathematik, ETH Zentrum, 
R\"amistrasse 101, CH-8092 Z\"urich, Switzerland}
\email{burger@math.ethz.ch}

\author[]{A. Iozzi}
\address{Department Mathematik, ETH Zentrum, 
R\"amistrasse 101, CH-8092 Z\"urich, Switzerland}
\email{iozzi@math.ethz.ch}

\author[]{A. Parreau}
\address{Univ. Grenoble Alpes, CNRS, Institut Fourier, F-38000
  Grenoble, France}
\email{Anne.Parreau@univ-grenoble-alpes.fr}

\author[]{M. B. Pozzetti}
\address{Mathematical Institute, Heidelberg University, Im Neuenheimer feld 205, 69120 Heidelberg, Germany }
\email{pozzetti@mathi.uni-heidelberg.de}
\thanks {Beatrice Pozzetti thanks Anna Wienhard for insightful conversations. Marc Burger thanks Francis Bonahon and Kasra Rafi for enlightening conversations on geodesic currents and Kasra Rafi for suggesting to prove a decomposition theorem for geodesic currents.
\\
\indent
Beatrice Pozzetti was partially supported by the SNF grant P2EZP2\_159117, and by the DFG project PO 2181/1-1. Marc Burger and Alessandra Iozzi were partially supported by the SNF grant 2-77196-16. Alessandra Iozzi acknowledges moreover support from U.S. National Science Foundation grants DMS 1107452, 1107263, 1107367 "RNMS: Geometric Structures and Representation Varieties" (the GEAR Network). 
Marc Burger thanks the Leverhulme Trust for supporting his visit to the University of Cambridge as Leverhulme Visiting Professor.  
Marc Burger, Alessandra Iozzi and Beatrice Pozzetti  would like to thank the Isaac Newton Institute for Mathematical Sciences, Cambridge, for support and hospitality during the program ``Non-Positive Curvature Group Actions and Cohomology'' where work on this paper was undertaken. This work was supported by EPSRC grant no P/K032208/1.}

\date{\today}
\begin{abstract}
We find a canonical decomposition of a geodesic current on a surface
of finite type arising from a topological decomposition of the surface
along special geodesics. We show that each component either is
associated to a measured lamination or has positive systole.
For a current with positive systole, we show that  the intersection
function on the set of closed curves is bilipschitz equivalent to the
length function with respect to a hyperbolic metric.
We show that the subset of currents with positive systole is open and
that the mapping class group acts properly discontinuously on it.
As an application, we obtain in the case of compact surfaces a structure theorem
on the length functions appearing in the length spectrum compactification both
of the Hitchin and of the maximal character varieties 
and determine therein an open set of discontinuity 
for the action of the mapping class group.
\end{abstract}

\maketitle


\section{Introduction}\label{sec:intro}
Let $\Sigma=\G\backslash \H^2$ be a complete hyperbolic surface of finite area. 
Recall that a geodesic current is a $\G$-invariant and flip invariant Radon measure 
on the space $(\partial\H^2)^{(2)}$ of distinct pairs of points in the boundary of $\H^2$. 
Geodesic currents occur in many different contexts. 
For instance geodesic currents play a fundamental role in the study of hyperbolic structures \cite{Bon}, 
negatively curved metrics \cite{Otal}, singular flat structures \cite{DLR}, Hitchin and maximal representations \cite{Martone_Zhang}. 

Our motivation comes form the study of various compactifications of maximal character varieties;  
we will present a simple application in this article.  
The aim of this paper is 
to exhibit a canonical decomposition of a general geodesic current 
 where either each component is associated to a measured lamination or 
 it has a behavior comparable to the one of a Liouville current.
In a forthcoming paper, continuing the study initiated in \cite{BP}
of maximally framed representations of surface groups into $\Sp(2n,\F)$,
where $\F$ is a non-Archimedean real closed field, we will show how to associate
to such a representation a geodesic current whose intersection function
on closed geodesics gives the length function.
Together with the results of this paper, 
this will lead to information concerning the real spectrum compactification
of maximal character varieties.


\medskip

Given a geodesic current $\mu$, 
we consider the set $\Ee_\mu$ of \emph{special} closed geodesics, namely
\bq\label{eq:E}
\ba
\Ee_\mu:=\{c\subset\Sigma:\,c\text{ is a closed geodesic such that }i(\mu, c)=0\text{ and}&\\
i(\mu,c')>0\text{ for every closed geodesic }c'\text{ with }i(c,c')>0&\}\,.
\ea
\eq
Here $i$ denotes the Bonahon intersection pairing on the set of geodesic currents,
which extends the geometric intersection number of closed geodesics 
and in fact the notation $i(\mu,c)$ really refers to the intersection number of $\mu$ and
the geodesic current $\delta_c$ associated to the closed geodesic $c$ (see Section \ref{sec:preliminaries} for more details).

Observe that $\Ee_\mu$ consists of pairwise disjoint simple closed curves that decompose the surface into a union of subsurfaces with geodesic boundary
\bq\label{eq:V}
\Sigma=\bigcup_{v\in\Vv_\mu} \Sigma_v\,.
\eq  
For such a subsurface $\Sigma_v$ let $\mathcal G_v\subset (\partial \H^2)^{(2)}$ be the set of geodesics whose projection lies in the interior $\mathring\Sigma_v$ of $\Sigma_v$, and, for a geodesic current $\nu$ let 
$$\Syst_{\Sigma_v}(\nu)=\inf\{i(\nu, c)|\,c\subset\mathring\Sigma_v \text{ closed geodesic }\}\,,$$
while $\Syst(\nu)$ refers to the analogous quantity with the infimum taken over all the geodesics in $\Sigma$.

\begin{thm}\label{thm_intro:dec}
Let $\mu$ be a geodesic current on a complete hyperbolic surface of finite area $\Sigma=\Gamma\backslash\H^2$,  
and let $\Ee_\mu$ and $\Vv_\mu$ as in \eqref{eq:E} and \eqref{eq:V}.
We have
$$\mu=\sum_{v\in \Vv} \mu_v+\sum_{c\in \Ee}\lambda_c\delta_{c}\,,$$
where $\mu_v$ is supported in $\mathcal G_v$ and $\delta_{c}$ is the geodesic current associated to the closed geodesic $c$. 
Furthermore, for every $v\in \Vv_\mu$ for which $\mu_v\neq 0$ precisely one of the following holds:
\begin{enumerate}
\item either $\Syst_{\Sigma_v}(\mu_v)>0$,
\item or $\mu_v$ is the geodesic current associated to a measured lamination 
 compactly supported in $\mathring\Sigma_v$ and intersecting every curve in $\Sigma_v$.
\end{enumerate}
\end{thm}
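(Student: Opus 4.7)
The argument proceeds in three stages: extracting the combinatorics of $\mathcal E_\mu$, disintegrating $\mu$, and performing the local analysis of each piece $\mu_v$. First, every $c\in\mathcal E_\mu$ must be simple, since otherwise $i(c,c)>0$ and applying the defining condition with $c'=c$ would yield $i(\mu,c)>0$, a contradiction. The same argument with $c,c'\in\mathcal E_\mu$ satisfying $i(c,c')>0$ is impossible, so distinct curves of $\mathcal E_\mu$ are disjoint; this legitimizes the topological decomposition \eqref{eq:V}. For the disintegration, the Bonahon box formula reads $i(\mu,c)=0$ as saying that the set $T_c\subset(\partial\H^2)^{(2)}$ of geodesics transversally crossing the axis of $c$ is $\mu$-null. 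Since $\mathcal E_\mu$ is countable, $\mu$-a.e. geodesic is either an axis of some $c\in\mathcal E_\mu$ or crosses no curve in $\mathcal E_\mu$, and in the latter case projects to $\mathring\Sigma_v$ for some $v$. Writing $\lambda_c\delta_c$ for the atomic part on the axes and $\mu_v$ for the restriction to $\Gg_v$ yields the decomposition.

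Fix $v$ with $\mu_v\neq 0$. If $\Syst_{\Sigma_v}(\mu_v)>0$ we are in case (1). The crux of case (2) is the implication
\[
\Syst_{\Sigma_v}(\mu_v)=0\ \Longrightarrow\ i(\mu_v,\mu_v)=0,
\]
after which Bonahon's classical characterization identifies $\mu_v$ with a measured lamination. Granting this, I would argue that the lamination must fill $\mathring\Sigma_v$ as follows. A closed $c\subset\mathring\Sigma_v$ with $i(\mu_v,c)=0$ satisfies condition (i) in the definition of $\mathcal E_\mu$ but cannot lie in $\mathcal E_\mu$, since it is in the interior, so condition (ii) must fail, yielding a curve $c''$ with $i(c,c'')>0$ and $i(\mu,c'')=0$. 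A repetition of the Stage 2 argument (any curve crossing $\partial\Sigma_v$ would meet some $c_\ast\in\mathcal E_\mu$ transversally and hence force $i(\mu,c'')>0$) shows $c''\subset\mathring\Sigma_v$. Now $\supp(\mu_v)$ must avoid both $c$ and $c''$, confining it to a proper subsurface of $\mathring\Sigma_v$, and iterating exhausts the finite topological complexity of $\mathring\Sigma_v$ to contradict $\mu_v\neq 0$. Atomic simple-closed-curve components of $\mu_v$ in $\mathring\Sigma_v$ are excluded by the same argument, so the lamination is diffuse and its minimal components are compactly contained in $\mathring\Sigma_v$.

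The \emph{main obstacle} is the displayed implication $\Syst_{\Sigma_v}(\mu_v)=0\Rightarrow i(\mu_v,\mu_v)=0$. Contrapositively, if $i(\mu_v,\mu_v)>0$ then $\supp(\mu_v)$ contains a transverse self-crossing at some $p\in\mathring\Sigma_v$, producing a positive-mass box of crossing geodesic pairs around $p$. The delicate step is to promote this local phenomenon into a uniform lower bound $i(\mu_v,c)\geq\varepsilon$ valid for every closed geodesic $c\subset\mathring\Sigma_v$. This conversion from a single self-crossing to a positive systole is an essentially global statement, requiring a careful interplay between the filling that the maximality of $\mathcal E_\mu$ forces on $\mu_v$ and the bounded topological complexity of $\mathring\Sigma_v$.
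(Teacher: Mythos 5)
Your Stage 1 (combinatorics of $\Ee_\mu$) is correct and matches the paper's observation, and your Stage 2 (disintegration) is essentially the argument of Theorem~\ref{thm:decBP}, though the paper is more careful: it builds subsurfaces from connected components of the graph $\calG_\mu(0)$ of all curves with vanishing intersection (not just $\Ee_\mu$), and it uses Proposition~\ref{prop:L(rho(gamma))=0} to show that concatenations of curves with $i(\mu,\cdot)=0$ are again $\mu$-somewhat short, which is what makes the subsurfaces behave correctly. But the proposal has a genuine gap exactly at what you yourself flag as the ``main obstacle'': the implication $\Syst_{\Sigma_v}(\mu_v)=0 \Rightarrow i(\mu_v,\mu_v)=0$. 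You only state this implication and describe a heuristic for the contrapositive; you do not prove it, and the contrapositive route you sketch (a self-crossing gives a local positive box, then ``promote to a uniform systole bound'') is not the way to go: there is no obvious mechanism for turning a single positive-mass box into a lower bound on $i(\mu_v,c)$ over \emph{all} closed curves $c$, and indeed the paper never argues in this direction.

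The paper's proof of that implication (Theorem~\ref{thm:main_currents}, $(1)\Rightarrow(3)$) is the substance of Section~\ref{sec:str_geo_curr}, and it goes \emph{directly}, built around the notion of a $\mu$-somewhat short geodesic (Definition~\ref{defn: somewhat short}). From $\Syst_{\Sigma_v}(\mu_v)=0$, one first uses the Martone--Zhang surgeries (Proposition~\ref{prop:1.23}, Corollary~\ref{cor:1.24}) to produce a sequence of \emph{simple} closed geodesics $\gamma_k$ with $i(\mu,\delta_{\gamma_k})\to 0$; their lengths go to infinity, so a subsequence of axes converges to a geodesic $(a,b)$, which a limit argument (Proposition~\ref{prop:1.25}) shows is $\mu$-somewhat short, simple, and recurrent. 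Then one shows (Propositions~\ref{lem:simple_ss_geod} and \ref{thm:1.5}) that $\overline{\Gamma(a,b)}$ is a lamination of $\mu$-somewhat short geodesics; a dynamical argument on complementary regions and crowns (Lemma~\ref{lem:1.14}, Corollary~\ref{lem:1.15}, Proposition~\ref{thm:1.13}) shows it fills $\Sigma_v$; and Proposition~\ref{lem:1.17} identifies $\supp(\mu_v)$ with this lamination, whence $i(\mu_v,\mu_v)=0$. None of this machinery, in particular somewhat short geodesics, appears in your proposal. You also omit the thrice-punctured sphere, which requires its own treatment (Proposition~\ref{prop_intro:prop1}) because the surgery argument needs $\Sigma_v$ to contain a non-peripheral simple closed curve. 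Your final filling argument (iterating on curves $c''$ with $i(c,c'')>0$, $i(\mu,c'')=0$) is a reasonable sketch once the lamination is in hand, but the decisive step is still missing.
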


A similar result, in the specific case of currents $\mu$ associated to degenerations of singular flat structures on surfaces, was proven by Duchin, Leininger and Rafi \cite[Theorem 5]{DLR}. Note, however, that our definition of systole doesn't coincide with the one used in the proof of \cite[Theorem 5]{DLR}.

The second case of the dichotomy in Theorem \ref{thm_intro:dec} admits equivalent interesting characterizations, 
in particular using the new concept of $\mu$-somewhat short geodesics,
a generalization of the concept of closed geodesic with vanishing $\mu$-intersection: a geodesic $\sigma\subset \H^2$ is $\mu$-somewhat short if it doesn't intersect any geodesic in the support of $\mu$ (see Section \ref{subsec:1.0} for more details). We state here the new characterization in the simpler case in which the decomposition of Theorem \ref{thm_intro:dec} is trivial, and refer to Theorem \ref{thm:main_currents} for a more general statement.

\begin{thm}\label{thm_intro:main_currents}  Let $\mu$ be a geodesic current 
with positive intersection with every simple closed curve. 
Then the following are equivalent:
\be
\item\label{item:main_currentsI(1)} $\Syst(\mu)=0$;
\item\label{item:main_currentsI(2)} There exists a recurrent geodesic $(a,b)$ that is $\mu$-somewhat short;
\item\label{item:main_currentsI(3)} The support of $\mu$ is a $\Gamma$-invariant lamination in $\H^2$
whose projection on $\Sigma$ is compactly supported, minimal and surface filling.
\ee
\end{thm}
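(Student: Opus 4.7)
My plan is to establish the three conditions cyclically, $(1)\Rightarrow(2)\Rightarrow(3)\Rightarrow(1)$, with the main technical work concentrated in $(2)\Rightarrow(3)$. For $(3)\Rightarrow(1)$: since $\supp(\mu)$ is a minimal measured lamination and cannot be a single simple closed geodesic (which would have self-intersection zero, contradicting the hypothesis $i(\mu,c)>0$ for every simple closed $c$), standard train-track approximation produces simple closed curves $c_n$ with $i(\mu,c_n)\to 0$, so $\Syst(\mu)=0$. For $(1)\Rightarrow(2)$: starting from closed geodesics $c_n$ with $i(\mu,c_n)\to 0$, pick lifts $(a_n,b_n)\in(\partial\H^2)^{(2)}$, apply elements of $\Gamma$ to keep them in a fixed compact slice, and extract a convergent subsequence $(a_n,b_n)\to(a,b)$. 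If $(a,b)$ crossed some $\sigma\in\supp(\mu)$ transversely, an open neighborhood $U$ of $\sigma$ with $\mu(U)>0$ would, by openness of the crossing relation, also cross $(a_n,b_n)$ for large $n$, forcing $i(\mu,c_n)\ge\mu(U)$ and contradicting $i(\mu,c_n)\to 0$; so $(a,b)$ is $\mu$-somewhat short. Recurrence is then obtained by extracting, via Zorn's lemma, a minimal closed $\Gamma$-invariant subset of the closed $\Gamma$-invariant set of $\mu$-somewhat short geodesics.

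The heart of the proof is $(2)\Rightarrow(3)$. Given a recurrent $\mu$-somewhat short $(a,b)$, I first show $(a,b)\in\supp(\mu)$: otherwise its projection is a recurrent geodesic in the open complement of the projection of $\supp(\mu)$ in $\Sigma$, but the hypothesis rules out any homotopically non-trivial simple closed curve inside that complement, so each component is a disk or cusp neighborhood and admits no recurrent geodesic. Hence $\Gamma\cdot(a,b)\subset\supp(\mu)$, and since the $\mu$-somewhat short property is $\Gamma$-invariant, $\Gamma\cdot(a,b)$ is pairwise non-crossing; its closure $L:=\overline{\Gamma\cdot(a,b)}\subset\supp(\mu)$ is therefore a $\Gamma$-invariant lamination, minimal by recurrence of $(a,b)$. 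Moreover, closedness of the non-crossing relation gives that every leaf of $L$ is also $\mu$-somewhat short, so every $\sigma\in\supp(\mu)$ fails to cross any leaf of $L$.

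To finish $(2)\Rightarrow(3)$ I must prove $\supp(\mu)=L$ and verify surface-filling and compactness. Surface-filling follows from the hypothesis: a homotopically non-trivial simple closed curve $c$ in a complementary region of $L$ would be disjoint from $L\subset\supp(\mu)$, so by $i(\mu,c)>0$ the remaining $\mu$-mass would sit in $\supp(\mu)\setminus L$ within that region, and a careful analysis of the geodesic flow on the region, combined with the hypothesis applied to simple essential curves inside it, should yield a contradiction. Once $L$ is surface-filling, the classical fact that any geodesic not crossing a surface-filling lamination must be a leaf of it forces $\supp(\mu)\subset L$, hence equality. Compactness is handled analogously using peripheral simple closed geodesics around cusps. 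The main obstacle will be this last step: one must simultaneously rule out non-trivial complementary regions of $L$ and exotic geodesics in $\supp(\mu)\setminus L$ (such as diagonals of complementary polygons), using only the hypothesis on simple closed curves without a priori control on how $\mu$-mass distributes between $L$ and its complement.
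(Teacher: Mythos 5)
Your $(3)\Rightarrow(1)$ via train tracks is fine, and your $(1)\Rightarrow(2)$ is in the right spirit, but the inequality $i(\mu,c_n)\geq\mu(U)$ does not follow merely from $(a_n,b_n)$ crossing every geodesic in $U$: the set of geodesics crossing the \emph{full} axis of $\gamma_n$ has $\mu$-measure $\sum_{k\in\Z}i(\mu,\delta_{\gamma_n})$, which is infinite whenever $i(\mu,\delta_{\gamma_n})>0$. One needs $U$ to fit inside a fundamental domain for the $\langle\gamma_n\rangle$-action, which the paper arranges by first passing to \emph{simple} closed geodesics (via Corollary~\ref{cor:1.24}, which uses the surgeries of Proposition~\ref{prop:1.23} and hence the special treatment of the thrice-punctured sphere in Proposition~\ref{prop_intro:prop1}) and then noting $\ell(\gamma_n)\to\infty$. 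That same divergence of lengths is also what yields recurrence of the limit $(a,b)$ directly; your Zorn's lemma route is not spelled out, and in the cusped case closed $\Gamma$-invariant subsets of $(\partial\H^2)^{(2)}$ need not be compact, so the chain condition is not automatic.

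The more serious gap is in $(2)\Rightarrow(3)$, at the very first step. The claim that a recurrent $\mu$-somewhat short geodesic $(a,b)$ must lie in $\supp(\mu)$ is false, and so is your supporting assertion that the complement of $\carrier(\mu)$ admits no recurrent geodesic: a diagonal of a complementary ideal polygon of a compactly supported minimal filling lamination stays in a compact set, spirals onto the lamination, is recurrent and $\mu$-somewhat short, yet it is not in $\supp(\mu)$. For the same reason the ``classical fact'' you invoke — that a geodesic not crossing a surface-filling lamination is a leaf — is not a fact: such a diagonal does not cross the lamination and is not a leaf. You flag ``diagonals of complementary polygons'' as an obstacle at the end of your plan, but the earlier claim $(a,b)\in\supp(\mu)$ silently assumes that obstacle away, and the ``pairwise non-crossing'' of $\Gamma(a,b)$ you rely on only follows once $(a,b)\in\supp(\mu)$ is known. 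The paper avoids this circle by a different route: it first upgrades the somewhat short geodesic to a \emph{simple} one (Proposition~\ref{lem:simple_ss_geod}), so $\overline{\Gamma(a',b')}$ is automatically a lamination of somewhat short geodesics; it then proves directly that diagonals of complementary regions of \emph{that} lamination are $\mu$-somewhat short (Lemma~\ref{lem:1.14}, Corollary~\ref{lem:1.15}), deduces that crown boundaries are $\mu$-somewhat short hence impossible (Proposition~\ref{thm:1.13}), so the lamination is filling; and only then identifies it with $\supp(\mu)$ (Proposition~\ref{lem:1.17}), because any $g\in\supp(\mu)\setminus\wt\Lambda$ would cross one of those $\mu$-somewhat short diagonals. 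This structural work on complementary regions is exactly what your proposal leaves as ``a careful analysis of the geodesic flow,'' and it is the content of the theorem.
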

Recall that a lamination is surface filling if it intersects every simple closed curve. We say that a geodesic $(a,b)$ is recurrent 
if at least one among $a$ and $b$ does not correspond to a cusp.

The next result explains to which extent currents with positive systole behave like a Liouville current, that is a current whose intersection computes the length in a hyperbolic structure (see Example \ref{ex:2.1}(2)). 
To this end recall that the space of geodesic currents $\mathcal C(\Sigma)$ is endowed with the weak*-topology 
as topological dual of the space of continuous functions with compact support on $(\partial \H^2)^{(2)}$. 
The quotient $\P\mathcal C(\Sigma)$ of $\mathcal C(\Sigma)^*$, the space of non-zero currents, 
by the positive scalar multiplication is then compact (see \cite{B2}).

Choose $c_1,\ldots,c_n$ any collection of closed geodesics in $\Sigma$ 
such that $\bigcup_{i=1}^nc_i$ cuts $\Sigma$ into discs containing at most one cusp, 
and let us consider the geodesic current $\lambda=\sum_i\delta_{c_i}$. 
Then $\lambda$ is surface filling and we may represent points $[\mu]\in\P\mathcal C(\Sigma)$ 
by the corresponding representative $\mu$ with $i(\mu,\lambda)=1$.

\begin{thm}\label{thm_intro:posSyst}
The set $\Omega=\{[\mu]|\; \Syst(\mu)>0\}\subset\P\mathcal C(\Sigma)$ is open.
Moreover for every $[\mu]\in\Omega$, and for every $K\subset \Sigma$ compact, 
there exist a neighborhood $V_{[\mu]}$ of $\mu$ and constants $0<C_1\leq C_2$ 
such that for every $[\nu]\in V_{[\mu]}$ and every geodesic $c$ contained in $K$ we have 
$$C_1\ell(c)\leq i(\nu,c)\leq C_2\ell(c).$$
 Here $\ell(c)$ denotes the hyperbolic length of the geodesic $c$.
\end{thm}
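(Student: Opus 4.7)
The plan is to establish the upper and lower bi-Lipschitz bounds separately and then deduce openness of $\Omega$. For the \emph{upper bound}, I would use a standard flow-box argument: cover $K$ by finitely many small embedded disks $D_1,\ldots,D_N$ in $\Sigma$ of diameter less than the infimum $r_0$ of the injectivity radius of $\Sigma$ on $K$, and let $\widehat{D_i} \subset \Gamma\backslash(\partial\H^2)^{(2)}$ be the relatively compact set of unparameterized geodesics of $\Sigma$ meeting $D_i$. The function $\nu \mapsto \nu(\widehat{D_i})$ is uniformly bounded on some weak* neighborhood of $\mu$. Subdividing any closed geodesic $c \subset K$ of length $\ell(c)$ into at most $\lceil \ell(c)/r_0 \rceil$ subarcs, each contained in some $D_i$, and bounding the intersection per subarc by $\nu(\widehat{D_i})$ yields $i(\nu, c) \le C_2\,\ell(c)$ with $C_2$ locally uniform in $\nu$.

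The \emph{lower bound} is the heart of the argument and I would prove it by contradiction. Assume it fails; then there exist representatives $\nu_n \to \mu$ with $i(\nu_n,\lambda)=1$ and closed geodesics $c_n \subset K$ with $i(\nu_n, c_n)/\ell(c_n) \to 0$. Setting $\mu_n := \delta_{c_n}/i(\delta_{c_n},\lambda)$ gives $i(\mu_n,\lambda)=1$. The upper bound applied to the filling current $\lambda$ itself, together with the complementary lower bound $i(\delta_{c_n},\lambda) \ge C\,\ell(c_n)$ for $c_n \subset K$ (which holds because $\lambda$ is surface-filling), gives $i(\delta_{c_n},\lambda) \asymp \ell(c_n)$, and hence
\[
i(\nu_n,\mu_n) \;=\; \frac{i(\nu_n, c_n)}{i(\delta_{c_n},\lambda)} \;\longrightarrow\; 0.
\]
By compactness of $\P\mathcal C(\Sigma)$ we may pass to a subsequence with $\mu_n \to \mu_\infty \neq 0$, supported on geodesics projecting into $K$. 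Bonahon's continuity of intersection then yields $i(\mu,\mu_\infty) = 0$. If $\ell(c_n)$ stays bounded, a subsequence of $c_n$ converges to a closed geodesic $c_\infty \subset K$ with $i(\mu, c_\infty) = 0$, directly contradicting $\Syst(\mu)>0$. Otherwise $\ell(c_n) \to \infty$ and $\mu_\infty$ is a nontrivial measured lamination compactly supported in $K$.

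The \emph{main obstacle} is to rule out such a measured lamination under $\Syst(\mu) > 0$. The vanishing $i(\mu, \mu_\infty) = 0$ means, via Fubini, that for $\mu_\infty$-almost every leaf $\ell$ of $\supp(\mu_\infty)$ the current $\mu$ assigns zero mass to the open set of geodesics transversally crossing $\ell$. Since the complementary set of geodesics not crossing $\ell$ is closed, $\supp(\mu)$ is contained in it, i.e.\ $\ell$ is a $\mu$-somewhat short geodesic. Because $\mu_\infty \neq 0$ is compactly supported in $K$, such leaves $\ell$ exist and are recurrent (their orbits under the geodesic flow stay in a compact subset of $T^1\Sigma$). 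On the other hand $\Syst(\mu) > 0$ implies that $\mu$ has positive intersection with every simple closed curve, so Theorem \ref{thm_intro:main_currents} applies, and the implication (2)$\Rightarrow$(1) forces $\Syst(\mu) = 0$, a contradiction.

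Finally, \emph{openness} of $\Omega$ follows at once: all closed geodesics of the finite-area surface $\Sigma$ lie in a fixed compact subsurface $K_0$ (the $\varepsilon$-thick part for small $\varepsilon$), with lengths bounded below by the systole $\ell_0 > 0$ of $\Sigma$. Applying the bi-Lipschitz lower bound on $K_0$ yields $\Syst(\nu) \ge C_1 \ell_0 > 0$ for every $\nu$ in the corresponding neighborhood of $\mu$, so $\Omega$ is open.
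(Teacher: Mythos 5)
Your overall strategy---deduce the bi-Lipschitz estimate by a compactness/contradiction argument, extracting a limiting nonzero compactly-carried current $\mu_\infty$ with $i(\mu,\mu_\infty)=0$, then deriving from this a $\mu$-somewhat short recurrent geodesic and invoking Theorem~\ref{thm_intro:main_currents}---is essentially the argument the paper packages as Theorem~\ref{thm:intersection} combined with the corollary at the end of Section~\ref{sec:pos}. Two remarks on the contradiction step: first, the paper normalizes the curves by the Liouville current ($i(\mathcal{L},\delta_c)=\ell(c)$), which makes $\delta_{c_n}/\ell(c_n)\in\CcK(\Sigma)_{\mathcal{L}}$ immediately and avoids your detour through $i(\delta_{c_n},\lambda)\asymp\ell(c_n)$. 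Second, your assertion that ``$\mu_\infty$ is a nontrivial measured lamination'' when $\ell(c_n)\to\infty$ is not justified---the self-intersection $i(\mu_n,\mu_n)$ need not tend to $0$---but fortunately the rest of your argument never uses it: the existence of a $\mu$-somewhat short recurrent geodesic in $\supp(\mu_\infty)$ only needs $\mu_\infty$ to be a nonzero current with compact carrier. The ``Fubini'' step is handled in the paper more explicitly (proof of (3)$\Rightarrow$(1) in Theorem~\ref{thm:intersection}) for \emph{every} point of $\supp(\mu_\infty)$, not just almost every; your version would need a little more care but is fixable.

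The one genuine gap is in your deduction of openness. You assert that ``all closed geodesics of the finite-area surface $\Sigma$ lie in a fixed compact subsurface $K_0$.'' This is false when $\Sigma$ is noncompact: on the modular surface, for instance, closed geodesics with many self-intersections penetrate arbitrarily deep into the cusp, so no compact set contains all of them, and your bi-Lipschitz estimate on $K_0$ says nothing about the infimum $\Syst(\nu)$ taken over \emph{all} hyperbolic conjugacy classes. The correct statement---and what the paper uses---is that \emph{simple} closed geodesics lie in a fixed compact set (a Margulis-type argument shows a simple closed geodesic cannot enter deep into a cusp horoball), together with Corollary~\ref{cor:1.24}, which reduces $\Syst(\nu)$ to an infimum over simple closed geodesics. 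With this fix your argument goes through; as written it only proves openness when $\Sigma$ is compact. Note the paper in fact proves openness first and independently (Corollary~\ref{cor:open}), by showing the set $\{\Syst=0\}$ is closed via Theorem~\ref{thm:intersection}(2) and compactness of $\MLc(\Sigma)_{\lambda_0}$---a route that sidesteps this issue entirely because compactly supported measured laminations all carry in a fixed compact $K$.
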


This theorem is a consequence of a new characterization of currents with vanishing systole 
in term of their intersection with measured laminations (see Theorem \ref{thm:intersection} for a precise statement).

An immediate consequence of Theorem \ref{thm_intro:posSyst}  and the fact that the mapping class group acts properly discontinuously on the Teichm\"uller space is the following:
\begin{cor}
The mapping class group of $\Sigma$ acts properly discontinuously on $\Omega$.
\end{cor}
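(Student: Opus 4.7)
The plan is to argue by contradiction: assuming a compact subset $K\subset\Omega$ meets infinitely many distinct $\mcg$-translates $\phi_n K$, I will produce a sequence $\phi_n\cdot g_0\in\mathcal T(\Sigma)$ (for a fixed reference metric $g_0$) that stays in a compact subset, contradicting proper discontinuity of the $\mcg$-action on $\mathcal T(\Sigma)$. Pick $[\mu_n]\in K$ with $[\phi_n\mu_n]\in K$, and, after extracting subsequences, assume $[\mu_n]\to[\mu_\infty]$ and $[\phi_n\mu_n]\to[\nu_\infty]$ in $K$.

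The key geometric observation is that on a finite-area hyperbolic surface every closed geodesic lies in a fixed compact subset $K_\Sigma\subset\Sigma$, obtained by removing sufficiently small horoball neighborhoods of the cusps (no closed geodesic enters these, by the Margulis lemma). Applying Theorem~\ref{thm_intro:posSyst} at both $[\mu_\infty]$ and $[\nu_\infty]$ with this $K_\Sigma$ yields, for $n$ large, constants $C_i,D_i>0$ such that, writing $\nu_n:=\phi_n\mu_n/t_n$ with $t_n:=i(\phi_n\mu_n,\lambda)$ for the $\lambda$-normalized representative of $[\phi_n\mu_n]$,
$$C_1\ell_{g_0}(c)\le i(\mu_n,c)\le C_2\ell_{g_0}(c)\qquad\text{and}\qquad D_1\ell_{g_0}(c)\le i(\nu_n,c)\le D_2\ell_{g_0}(c)$$
for \emph{every} closed geodesic $c$. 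Combining these with the $\mcg$-equivariance $i(\phi_n\mu_n,c)=i(\mu_n,\phi_n^{-1}c)$, and noting that $\phi_n^{-1}c$ is again a closed geodesic and hence again in $K_\Sigma$, yields
$$\frac{t_nD_1}{C_2}\,\ell_{g_0}(c)\;\le\;\ell_{g_0}(\phi_n^{-1}c)\;\le\;\frac{t_nD_2}{C_1}\,\ell_{g_0}(c).$$

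To finish, I would take the infimum over all closed geodesics $c$; since $c\mapsto\phi_n^{-1}c$ is a bijection of free homotopy classes, $\inf_c\ell_{g_0}(\phi_n^{-1}c)=\Syst(g_0)>0$, and the two resulting scalar inequalities pin $t_n$ to the bounded interval $[C_1/D_2,\,C_2/D_1]$. Rewriting $\ell_{g_0}(\phi_n^{-1}c)=\ell_{\phi_n\cdot g_0}(c)$, the displayed two-sided bound then says $\ell_{\phi_n\cdot g_0}$ and $\ell_{g_0}$ are bilipschitz equivalent uniformly in $n$ on the set of closed curves; by the standard compactness criterion in $\mathcal T(\Sigma)$ (bounded lengths on a filling family of curves) the sequence $\{\phi_n\cdot g_0\}$ remains in a compact subset of $\mathcal T(\Sigma)$, contradicting proper discontinuity. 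I expect the main obstacle to be controlling the projective rescaling factor $t_n$: the two bilipschitz comparisons alone only determine $\ell_{\phi_n\cdot g_0}$ up to an overall scaling, and it is the $\mcg$-invariance of the systole $\Syst(g_0)$ that is needed to kill this scaling ambiguity.
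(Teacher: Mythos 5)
Your overall strategy is exactly the one the paper intends with its one-line proof: use the local bilipschitz comparison of Theorem~\ref{thm_intro:posSyst} to transport proper discontinuity from Teichm\"uller space, after extracting limits in the compact set $K$ and controlling the projective normalization factor $t_n$. For a \emph{compact} surface $\Sigma$ the argument as you have written it is complete and correct (there $K_\Sigma=\Sigma$, so the bilipschitz bound applies to every closed geodesic, and your infimum/normalization bookkeeping for $t_n$ goes through).

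However, your ``key geometric observation'' is false when $\Sigma$ is a non-compact finite-area surface, which is the generality in which the corollary is stated (the paper's blanket compactness hypothesis is only imposed afterwards, for the character-variety applications). It is not true that every closed geodesic avoids a fixed horoball neighborhood of the cusps: closed geodesics can make arbitrarily deep cusp excursions. For instance, with $\beta$ a parabolic generating the cusp and $\alpha$ hyperbolic, the axis of $\alpha\beta^n$ penetrates to depth roughly $\log n$ in the cusp as $n\to\infty$, so there is no compact $K_\Sigma\subset\Sigma$ containing all closed geodesics. The Margulis lemma only rules out a closed geodesic being \emph{contained} in a cusp region, not from entering it. This breaks the step where you write ``$\phi_n^{-1}c$ is again a closed geodesic and hence again in $K_\Sigma$'': once $\phi_n^{-1}c$ leaves $K_\Sigma$, Theorem~\ref{thm_intro:posSyst} no longer provides the comparison $C_1\ell(\phi_n^{-1}c)\le i(\mu_n,\phi_n^{-1}c)\le C_2\ell(\phi_n^{-1}c)$, so the chain of inequalities pinning $t_n$ and bounding $\ell_{\phi_n\cdot g_0}$ collapses. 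In the non-compact case an additional argument is needed, e.g.\ controlling the cusp excursion of $\phi_n^{-1}c_j$ for a fixed filling family $\{c_j\}$, or replacing the pointwise comparison over all geodesics by a comparison against a compactly carried filling current; as written, your proof only establishes the corollary for compact $\Sigma$.
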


In a forthcoming paper we will use the results of this paper to show
that, for a general surface of finite type $\Sigma$, the mapping class
group of $\Sigma$ acts properly discontinuously on the character
variety of maximal representations.

We turn now to the applications to the length spectrum compactifications of maximal and Hitchin character varieties.
We assume from now on that $\Sigma=\Gamma\backslash\H^2$ is compact.
The extension of these results to the case in which $\Sigma$ is not compact requires additional tools 
and will be presented in a forthcoming paper.

Recall that a representation $\rho:\pi_1(\Sigma)\to\Sp(2n,\R)$ is maximal if its Toledo invariant achieves
its maximal value $n(2g-2)$; see \cite{BILW} for details. 
The space $\mathrm{Max}(\Sigma,n)$ of $\Sp(2n,\R)$-conjugacy classes of maximal representations
forms a union of connected components of the character variety of $\pi_1(\Sigma)$ in $\Sp(2n,\R)$.
The Hitchin component $\mathrm{Hit}(\Sigma,n)$ is the connected component of the character variety
of $\pi_1(\Sigma)$ in $\PSL(n,\R)$ containing $i_n\circ h$, where $i_n:\PSL(2,\R)\to\PSL(n,\R)$
is the $n$-dimensional irreducible representation of $\PSL(2,\R)$ and $h:\pi_1(\Sigma)\to\PSL(2,\R)$
is an orientation preserving hyperbolization.  
In the sequel  $\calX(\Sigma,n)$  will refer either to
$\mathrm{Max}(\Sigma,n)$ or to $\mathrm{Hit}(\Sigma,n)$.

In the spirit of Thurston and following Parreau \cite{Parreau12}, 
the space $\calX(\Sigma,n)$ 
can be compactified using length functions and in turn, length functions 
in the boundary come from certain isometric actions on affine buildings.
We now define these length functions in their proper context.
Let $\F$ be a real closed field containing $\R$, which in the case in which $\F\neq\R$,
we assume endowed with a non-Archimedean valuation.  
We let $\mathrm{G}_n(\F)$ be either $\Sp(2n,\F)$ or $\SL(n,\F)$,
and let $\Xx_n^\R$ be the associated symmetric space if $\F=\R$ 
or the associated Bruhat--Tits building if $\F$ is non-Archimdean.
Let $\nu:\mathrm{G}_n(\F)\to\overline{\mathfrak a^+}$ be the Weyl chamber valued translation vector
(see \cite[p.~3]{Parreau12}) and let $\|\,\,\|$ be the Weyl group invariant norm on $\mathfrak a$ defined as follows:
\begin{itemize}
\item in the case of the symplectic group,
\bqn
\overline{\mathfrak a^+}=\{(x_1,\dots,x_n)\in\R^n:\,x_1\geq\dots\geq x_n\geq0\}
\eqn
and
\bqn
\|(x_1,\dots,x_n)\|:=\sum_{i=1}^nx_i\,;
\eqn
\item in the case of the special linear group,
\bqn
\overline{\mathfrak a^+}=\{(x_1,\dots,x_n)\in\R^n:\,x_1\geq\dots\geq x_n\text{ and }\,x_1+\dots+ x_n=0\}
\eqn
and
\bqn
\|(x_1,\dots,x_n)\|:=x_1-x_n\,.
\eqn
\end{itemize}
Then for $g\in\mathrm{G}_n(\F)$ we define
\bqn
L(g)=\|\nu(g)\|\,.
\eqn

Given a representation $\rho$ of $\pi_1(\Sigma)$ into $\mathrm{G}_n(\F)$, 
if  $\mathscr{C}$ denotes  the set of conjugacy classes of non-trivial elements of $\pi_1(\Sigma)$, 
we define $L_\rho:\mathscr{C}\to\R$ by
\bqn
L_\rho(c):=L(\rho(\gamma))\,,
\eqn
where $\gamma\in\pi_1(\Sigma)$ represents $c\in\mathscr{C}$.  
Endowing  $\R_{\geq0}^\mathscr{C}$ with the product topology 
and $\P(\R_{\geq0}^\mathscr{C})$ with the quotient topology, we now
consider the continuous map $\calL:\calX(\Sigma,n) \to \P(\R_{\geq0}^\mathscr{C})$ 
induced by $\rho\mapsto[L_\rho]$.
Here and in the sequel, $[L]\in\P(\R_{\geq0}^\mathscr{C})$ denotes the equivalence class of a non-zero length function $L$
up to multiplication by positive scalars.
The boundary of $\calX(\Sigma,n)$ in the length compactification is
$$\partial\calL(\Sigma,n):=\cap_\calK \overline{\calL(\calX(\Sigma,n)-\calK)}$$
where the intersection is taken over all compact subsets $\calK\subset
\calX(\Sigma,n)$. It is a compact subset of $\P(\R_{\geq0}^\mathscr{C})$.
This compactification can be equivalently described as the closure of $\calX$ regarded 
as a subspace of the product of the Alexandrov compactification $\hat X$ and $\P(\R^\mathscr C_{\geq 0})$. 

Given $[L]\in\P(\R_{\geq0}^\mathscr{C})$, we consider in analogy with \eqref{eq:E} the set $\Ee_{[L]}$ of special geodesics
\bq\label{eq:EL}
\ba
\Ee_{[L]}:=\{c\subset\Sigma:\,c\text{ is a closed geodesic such that }L(c)=0\text{ and}&\\
L(c')>0\text{ for every closed geodesic }c'\text{ that intersects }c&\}\,.
\ea
\eq
Then $\Ee_{[L]}$ consists of pairwise disjoint simple closed geodesics that decompose the surface
into a union of subsurfaces with geodesic boundary
\bq\label{eq:VL}
\Sigma=\bigcup_{v\in\Vv_{[L]}} \Sigma_v\,.
\eq
Let 
\bqn
\mathrm{Syst}_{\Sigma_v}(L):=\inf\{L(c):c\subset\Sigma_v^\circ\text{ is closed}\}
\eqn
and
\bqn
\mathrm{Syst}(L):=\inf\{L(c):c\subset\Sigma\text{ is closed}\}\,.
\eqn

\begin{cor}\label{cor_intro:1.5}  Let $[L]$ be in $\partial\calL(\Sigma,n)$ and let $\Ee_{[L]}$
and $\Vv_{[L]}$ be as in \eqref{eq:EL} and \eqref{eq:VL}.  Then one of the following holds for $\Sigma_v$:
\be
\item $L$ vanishes on $\pi_1(\Sigma_v)$;
\item $L(c)>0$ for every closed geodesic $c\subset\Sigma_v^\circ$ and $L$ is the length function
associated to a measured lamination compactly supported in $\Sigma_v^\circ$;
\item $\mathrm{Syst}_{\Sigma_v}(L)>0$.
\ee
\end{cor}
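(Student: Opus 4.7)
The plan is to reduce Corollary \ref{cor_intro:1.5} to the decomposition Theorem \ref{thm_intro:dec} by associating to the length function $L$ a geodesic current $\mu$ on $\Sigma$ with $i(\mu,c)=L(c)$ for every closed geodesic $c$; once such a current is in hand the three cases of the corollary correspond directly to the dichotomy of Theorem \ref{thm_intro:dec}.

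First I would construct the current. For any representation $\rho\in\calX(\Sigma,n)$ valued in $\mathrm{G}_n(\R)$ one attaches a geodesic current $\mu_\rho$ with $i(\mu_\rho,c)=L_\rho(c)$, which is essentially the content of Martone--Zhang in both the Hitchin and the maximal case. Since $[L]\in\partial\calL(\Sigma,n)$ arises as a limit $[L_{\rho_k}]\to[L]$ with $\rho_k$ eventually leaving every compact subset of $\calX(\Sigma,n)$, I would pick scalars $t_k>0$ normalizing $t_k\mu_{\rho_k}$ to the compact slice $\{\nu\in\Cc(\Sigma):i(\nu,\lambda)=1\}$ associated to a surface-filling current $\lambda$ as in the paragraph preceding Theorem \ref{thm_intro:posSyst}, extract a subsequential limit $\mu$, and exploit continuity of the Bonahon intersection pairing to conclude that $i(\mu,c)$ is proportional to $L(c)$ on closed geodesics. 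Absorbing the proportionality into the choice of representative of $[L]$, I may assume $i(\mu,c)=L(c)$.

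With $\mu$ available, matching $\Ee_\mu$ with $\Ee_{[L]}$ is immediate from $i(\mu,c)=L(c)$ and the defining conditions, so $\Vv_\mu=\Vv_{[L]}$ and Theorem \ref{thm_intro:dec} yields a decomposition $\mu=\sum_v\mu_v+\sum_{c\in\Ee}\lambda_c\delta_c$. For any closed geodesic $c\subset\Sigma_v^\circ$ only the term $\mu_v$ contributes to $i(\mu,c)$: the currents $\mu_{v'}$ with $v'\neq v$ are supported on geodesics not projecting to $\Sigma_v^\circ$, and each $c'\in\Ee$ is a boundary component of the decomposition and hence disjoint from $c$. Thus $L(c)=i(\mu_v,c)$ for every interior closed geodesic of $\Sigma_v$, and the three alternatives for $\mu_v$ in Theorem \ref{thm_intro:dec} translate directly into the three cases of the corollary: if $\mu_v=0$ then $L$ vanishes on all interior closed geodesics of $\Sigma_v$ and, since the boundary components of $\Sigma_v$ belong to $\Ee_{[L]}$, on all of $\pi_1(\Sigma_v)$, giving (1); if $\mu_v$ is a measured lamination compactly supported in $\Sigma_v^\circ$ then $L$ restricted to $\pi_1(\Sigma_v^\circ)$ is its length function, giving (2); and if $\Syst_{\Sigma_v}(\mu_v)>0$ the equality $L(c)=i(\mu_v,c)$ gives (3).

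The hardest step I anticipate is the construction of $\mu$ itself. The argument above relies on the relative compactness of the normalized currents $t_k\mu_{\rho_k}$ in $\Cc(\Sigma)$ and on the joint continuity of $i$, both classical on the compact surface $\Sigma$; the delicate point is arranging a single normalization that works simultaneously for all test curves and that survives the passage to the limit without collapsing. Once the forthcoming non-Archimedean construction announced in the introduction is in place, one may alternatively build $\mu$ directly from the Parreau-type non-Archimedean representation associated to the boundary point, bypassing the limiting procedure altogether.
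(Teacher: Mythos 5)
Your proposal is correct and follows essentially the same route as the paper: use the Martone--Zhang correspondence to represent boundary length functions by geodesic currents, then apply Theorem~\ref{thm_intro:dec}. The paper packages your explicit sequential/normalization argument more compactly by observing that the continuous map $I:\P\calC(\Sigma)\to\P(\R_{\geq 0}^{\mathscr C})$, $[\mu]\mapsto[i(\mu,\cdot)]$, has compact (hence closed) image containing $\calL(\Sigma,n)$, and therefore its closure; working projectively from the outset sidesteps the bookkeeping of matching your scalars $t_k$ against the scalars implicit in $[L_{\rho_k}]\to[L]$, which is precisely the ``delicate point'' you flag. Your second paragraph, checking that only $\mu_v$ contributes to $i(\mu,c)$ for $c\subset\Sigma_v^\circ$ and translating the three alternatives, is correct and fills in details the paper leaves implicit in ``implies immediately.''
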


Observe that $\partial\calL_\mathrm{Max}(\Sigma,1)=\partial\calL_\mathrm{Hit}(\Sigma,2)$ 
is the Thurston boundary of Teichm\"uller space and consequently the third case of the trichotomy
does not occur in this case.

However, as we will show in a forthcoming paper, for $n\geq2$ the third case of the trichotomy can occur
for length functions in $\partial\calL_\mathrm{Max}(\Sigma,n)$.
To state the corollary concerning these length functions it will be convenient to choose preferred
representatives for elements in $\partial\calL(\Sigma,n)$.
Let $\{c_1,c_2,\dots,c_{6g-6}\}$ be a surface filling family of simple closed geodesics.
We will see that any $[L]\in \overline{\calL(\Sigma,n)}$ satisfies $\sum_{i=1}^{6g-6}L(c_i)>0$
and will henceforth represent a class $[L]$ by the representative satisfying $\sum_{i=1}^{6g-6}L(c_i)=1$.
Let then
\bqn
\Omega(\Sigma,n):=\{[L]\in\partial\calL(\Sigma,n):\mathrm{Syst}(L)>0\}\,.
\eqn
The mapping class group acts on $\P(\R_{\geq0}^\mathscr{C})$
preserving 
$\partial\calL(\Sigma,n)$ and $\Omega(\Sigma,n)$.  
Let $\ell$ be the length function for the hyperbolic metric on $\Sigma=\Gamma\backslash\H^2$.

\begin{cor}\label{cor_intro:1.6}  The set $\Omega(\Sigma,n)$ is an open subset of $\partial\calL(\Sigma,n)$
on which the mapping class group acts properly discontinuously.  
In fact for every $[L]\in\Omega(\Sigma,n)$ there is a neighborhood $V_{[L]}$ of $[L]$
in $\Omega(\Sigma,n)$ and constants $0<C_1\leq C_2$ such that for every $[L']\in V_{[L]}$
\bqn
C_1\ell(c)\leq L'(c)\leq C_2\ell(c)\,.
\eqn
\end{cor}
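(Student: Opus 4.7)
The plan is to identify the length-spectrum boundary $\partial\calL(\Sigma,n)$ with a subset of $\P\mathcal C(\Sigma)$ and then deduce the corollary directly from Theorem~\ref{thm_intro:posSyst}. The key ingredient, provided by the work of Martone--Zhang in the Archimedean case and by the forthcoming paper mentioned in the introduction in the non-Archimedean case, is a construction associating to every representation $\rho$ representing a point of $\calX(\Sigma,n)$ a nonzero geodesic current $\mu_\rho\in\mathcal C(\Sigma)$ with $L_\rho(c)=i(\mu_\rho,c)$ for every $c\in\mathscr C$. Since $\P\mathcal C(\Sigma)$ is compact, passing to a convergent subsequence of $[\mu_\rho]$ along any sequence leaving compact sets of $\calX(\Sigma,n)$ yields a continuous injection $\iota:\partial\calL(\Sigma,n)\hookrightarrow\P\mathcal C(\Sigma)$ characterized by $\iota[L]=[\mu]$ whenever $L(c)=i(\mu,c)$ for all $c\in\mathscr C$.

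With $\iota$ in hand, openness and the bilipschitz estimate follow from Theorem~\ref{thm_intro:posSyst}. Indeed $\Syst(L)=\Syst(\mu)$ whenever $\iota[L]=[\mu]$, so $\Omega(\Sigma,n)=\iota^{-1}(\Omega)$ is open by continuity of $\iota$. Since $\Sigma$ is compact, every closed geodesic lies in the compact set $K=\Sigma$, and Theorem~\ref{thm_intro:posSyst} applied to $\iota[L]\in\Omega$ yields a neighborhood $V_{\iota[L]}\subset\P\mathcal C(\Sigma)$ and constants $0<C_1\leq C_2$ such that $C_1\ell(c)\leq i(\nu,c)\leq C_2\ell(c)$ for every $[\nu]\in V_{\iota[L]}$. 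The normalization $i(\nu,\sum_i\delta_{c_i})=1$ used there matches the normalization $\sum_i L'(c_i)=1$ fixed before the statement of Corollary~\ref{cor_intro:1.6}, so $V_{[L]}:=\iota^{-1}(V_{\iota[L]})$ with the same constants delivers the claimed bilipschitz bound.

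For proper discontinuity, suppose a sequence $\phi_n\in\mcg$ satisfies $[L_n]\to[L]$ and $\phi_n\cdot[L_n]\to[L']$ with $[L],[L']\in\Omega(\Sigma,n)$; the $\mcg$-action is given by $(\phi\cdot L)(c)=L(\phi^{-1}c)$. For $n$ large both $L_n$ and $\phi_n\cdot L_n$ lie in the bilipschitz neighborhoods of $[L]$ and $[L']$ respectively, so (after taking the worst pair of constants) the normalized representatives satisfy
$$C_1\ell(c)\leq L_n(c)\leq C_2\ell(c),\qquad C_1\ell(c)\leq L_n(\phi_n^{-1}c)\leq C_2\ell(c)$$
uniformly in $c\in\mathscr C$ and $n$. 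Dividing gives $\ell(\phi_n^{-1}c)\in[C_1/C_2,\,C_2/C_1]\cdot\ell(c)$, so the pulled-back hyperbolic metric $\phi_n^{-1}\cdot m_0$ (where $m_0$ is the metric with length function $\ell$) has systole uniformly bounded away from zero and lengths of the surface-filling family $c_1,\dots,c_{6g-6}$ uniformly bounded. Hence $\{\phi_n^{-1}\cdot m_0\}$ remains in a compact subset of Teichm\"uller space, and proper discontinuity of $\mcg$ on Teichm\"uller space forces $\{\phi_n\}$ to be finite, establishing proper discontinuity on $\Omega(\Sigma,n)$.

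The main difficulty lies in Step 1: constructing the geodesic current $\mu_\rho$ canonically attached to a representation, particularly in the non-Archimedean setting, is a substantial task relegated to the forthcoming paper. Granting this correspondence, the rest of Corollary~\ref{cor_intro:1.6} is a formal translation of Theorem~\ref{thm_intro:posSyst} combined with the classical link between bilipschitz comparison of marked length spectra and compactness in Teichm\"uller space.
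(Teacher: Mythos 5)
Your approach is essentially the paper's: identify $\partial\calL(\Sigma,n)$ with a subset of $\P\calC(\Sigma)$ via the intersection map, then pull Theorem~\ref{thm_intro:posSyst} back through that identification. However, the central technical step — that the map $\iota$ you introduce is well defined and continuous — is precisely what you cannot get for free. Existence of a current $[\mu]$ with $L(\cdot)=i(\mu,\cdot)$ for a given boundary point $[L]$ follows, as you observe, from compactness of $\P\calC(\Sigma)$ and continuity of the projectivized intersection map $I:\P\calC(\Sigma)\to\P(\R_{\geq 0}^{\mathscr C})$; but a priori two different limiting currents could produce the same length function, in which case $\iota$ would not be well defined, let alone continuous. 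The paper closes this gap explicitly by invoking Otal's theorem (\cite{Otal}) that $I$ is a homeomorphism onto its image; this gives both injectivity (so the current attached to $[L]$ is unique) and continuity of the inverse. You should cite this result rather than asserting that the subsequence construction "yields a continuous injection."

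Your proper-discontinuity argument is a genuine addition: the paper dispatches this in a single sentence, deferring to the corresponding statement for $\Omega\subset\P\calC(\Sigma)$, whereas you spell out a Teichm\"uller-space compactness argument. The idea is right, but the normalization step is not correct as written. If $L_n$ is normalized by $\sum_i L_n(c_i)=1$, then the normalized representative of $\phi_n\cdot L_n$ is $\beta_n L_n(\phi_n^{-1}\cdot)$ with $\beta_n = \bigl(\sum_i L_n(\phi_n^{-1}c_i)\bigr)^{-1}$, not $L_n(\phi_n^{-1}\cdot)$ itself. What the bilipschitz estimate actually gives is
\[
\frac{C_1}{\beta_n C_2}\,\ell(c)\ \le\ \ell(\phi_n^{-1}c)\ \le\ \frac{C_2}{\beta_n C_1}\,\ell(c)\qquad\text{for all }c,
\]
with an unknown scalar $\beta_n$, so "dividing" does not immediately give $\ell(\phi_n^{-1}c)\in[C_1/C_2,\,C_2/C_1]\cdot\ell(c)$. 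The fix is to pass to ratios, which kills the $\beta_n$: for all $c,c'$,
\[
\frac{C_1^2}{C_2^2}\ \le\ \frac{\ell(\phi_n^{-1}c)/\ell(c)}{\ell(\phi_n^{-1}c')/\ell(c')}\ \le\ \frac{C_2^2}{C_1^2}\,,
\]
hence the projectivized length functions of $\phi_n^{-1}\cdot m_0$ stay uniformly away from $\P\mathcal{ML}(\Sigma)$ in Thurston's compactification (every projective measured lamination vanishes on some curve, which these do not). Therefore the $\phi_n^{-1}\cdot m_0$ remain in a compact subset of Teichm\"uller space, and proper discontinuity of $\mcg$ there forces $\{\phi_n\}$ to be finite. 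With the Otal citation and this correction to the normalization, the argument is complete and matches the paper's route.
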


The above results have also direct applications to the actions of $\pi_1(\Sigma)$ on asymptotic cone actions
built from sequences of maximal or Hitchin representations.
Let $(\rho_k)_{k\in\N}$ be a sequence of representations of $\pi_1(\Sigma)$ into $\mathrm{G}_n(\R)$
that are either Hitchin or maximal, $\omega$ a non-principal ultrafilter on $\N$ and $S$ a finite generating set of $\pi_1(\Sigma)$.
Let $d$ denote the Riemannian distance on $\Xx_n^\R$, let 
$(x_k)_{k\in\N}$ be a sequence of points in $\Xx_n^\R$ 
and set $\lambda_k:=\max_{\gamma\in S}d(\rho_k(\gamma)x_k,x_k)$.
Then the asymptotic cone ${}^\omega\Xx_\lambda$ of the sequence $(\Xx_n^\R,x_k,\frac{d}{\lambda_k})$
can be identified with the building associated to $\mathrm{G}_n({}^\omega\R_\lambda)$,
where ${}^\omega\R_\lambda$ is the Robinson field associated to $\omega$ and the sequence $\lambda=(\lambda_k)_{k\in\N}$.
The $\pi_1(\Sigma)$-action by isometries comes from a representation ${}^\omega\rho_\lambda:\pi_1(\Sigma)\to\mathrm{G}_n({}^\omega\R_\lambda)$
canonically associated to $(\rho_k)_{k\in\N}$, $\omega$ and $\lambda$.  
Then the length function
\bqn
L_{{}^\omega\rho_\lambda}(\gamma):=L({}^\omega\rho_\lambda(\gamma))\,,\quad\text{ for }\gamma\in\pi_1(\Sigma)\,,
\eqn
belongs to $\partial\calL(\Sigma,n)$, \cite{Parreau12};  this, together with Corollaries~\ref{cor_intro:1.5} and \ref{cor_intro:1.6},
the main theorem of \cite{Parreau03} as well as \cite[Proposition~2.2.1 and Lemma~2.0.1]{DGLM},
imply then the following refinement of the decomposition theorem in \cite{BP}:

\begin{cor}\label{cor_intro:1.7}  Let ${}^\omega\rho_\lambda$ be the action of $\pi_1(\Sigma)$ on the asymptotic cone ${}^\omega\Xx_\lambda$
associated as above to a sequence of Hitchin or maximal representations and let $\Sigma=\bigcup_{v\in\Vv_{[l]}}\Sigma_v$
be the decomposition associated to the length function $L_{{}^\omega\rho_\lambda}$.
\begin{itemize}
\item[A.]  Then one of the following holds for $\Sigma_v$:
\begin{enumerate}
\item ${}^\omega\rho_\lambda(\pi_1(\Sigma_v))$ has a global fixed point in ${}^\omega\Xx_\lambda$.
\item $L_{{}^\omega\rho_\lambda}(c)>0$ for every closed geodesic $c\subset\Sigma_v^\circ$ and 
$L_{{}^\omega\rho_\lambda}$ is the length function of a compactly supported measured lamination on $\Sigma_v^\circ$.
\item $\mathrm{Syst}_{\Sigma_v}(L_{{}^\omega\rho_\lambda})>0$
\end{enumerate}
\item[B.] In the case in which the decomposition is trivial, that is $L_{{}^\omega\rho_\lambda}(c)>0$ 
for every geodesic $c\subset\Sigma$, we have the following dichotomy:
\begin{enumerate}
\item Either $L_{{}^\omega\rho_\lambda}$ is the length function of a minimal surface filling measured lamination,
\item or the action ${}^\omega\rho_\lambda$ of $\pi_1(\Sigma)$ on ${}^\omega\Xx_\lambda$ is displacing.
In particular orbit maps are quasi-isometric embeddings. 
\end{enumerate}
\end{itemize}
\end{cor}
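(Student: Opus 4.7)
The plan is to deduce Corollary \ref{cor_intro:1.7} by applying the two structural results already obtained (Corollaries \ref{cor_intro:1.5} and \ref{cor_intro:1.6}) to the specific length function $L_{{}^\omega\rho_\lambda}$, and then upgrading the outcomes of the resulting trichotomy using known facts about isometric actions on affine buildings. Since Parreau \cite{Parreau12} shows that $L_{{}^\omega\rho_\lambda}\in\partial\calL(\Sigma,n)$, both corollaries apply directly.

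For Part A, I would first run Corollary \ref{cor_intro:1.5} on $L_{{}^\omega\rho_\lambda}$ with the decomposition $\Sigma=\bigcup_{v\in\Vv_{[L]}}\Sigma_v$ induced by $L_{{}^\omega\rho_\lambda}$. Cases (2) and (3) of Corollary \ref{cor_intro:1.5} match verbatim cases (2) and (3) in the statement, so no work is needed there. The task is to upgrade case (1) of Corollary \ref{cor_intro:1.5}, namely the vanishing of $L_{{}^\omega\rho_\lambda}$ on $\pi_1(\Sigma_v)$, to the assertion that ${}^\omega\rho_\lambda(\pi_1(\Sigma_v))$ has a global fixed point in ${}^\omega\Xx_\lambda$. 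This is exactly the content of the main theorem of \cite{Parreau03}: a finitely generated subgroup of the isometry group of an affine building all of whose elements have vanishing translation length stabilizes a point in the building. Since ${}^\omega\Xx_\lambda$ is the building associated to $\mathrm{G}_n({}^\omega\R_\lambda)$ and $\pi_1(\Sigma_v)$ is finitely generated, this applies verbatim.

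For Part B, the hypothesis that the decomposition is trivial means that Corollary \ref{cor_intro:1.5} produces a single piece $\Sigma_v=\Sigma$, and case (1) is excluded since $L_{{}^\omega\rho_\lambda}(c)>0$ for every closed geodesic. In case (2) of Corollary \ref{cor_intro:1.5} the length function is that of a compactly supported measured lamination that intersects every closed curve; invoking the equivalence in Theorem \ref{thm_intro:main_currents} (applied to the geodesic current associated to the lamination), the lamination must in addition be minimal and surface filling, giving Part B(1). In case (3) of Corollary \ref{cor_intro:1.5}, $\mathrm{Syst}(L_{{}^\omega\rho_\lambda})>0$, so Corollary \ref{cor_intro:1.6} produces constants $0<C_1\leq C_2$ and a neighborhood of $[L_{{}^\omega\rho_\lambda}]$ in which $C_1\ell(c)\leq L'(c)\leq C_2\ell(c)$ for every closed geodesic $c$. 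Specializing to $L_{{}^\omega\rho_\lambda}$ itself yields a two-sided comparison with the hyperbolic length function. Proposition~2.2.1 and Lemma~2.0.1 of \cite{DGLM} then translate this comparison into the displacing property for the action of $\pi_1(\Sigma)$ on ${}^\omega\Xx_\lambda$ and, in turn, into the quasi-isometric embedding of orbit maps, giving Part B(2).

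The step I expect to be the main obstacle is the application of Parreau's fixed point theorem in Part A: one must check that the vanishing of $L_{{}^\omega\rho_\lambda}$ as defined here (via the Weyl-group invariant norm $\|\cdot\|$ of $\nu$) is equivalent to the vanishing of the genuine translation length on the building ${}^\omega\Xx_\lambda$, so that the hypothesis of \cite{Parreau03} is met, and that the resulting fixed point actually lives in the asymptotic cone rather than in some boundary compactification. The remaining pieces are largely bookkeeping, piecing together Corollary \ref{cor_intro:1.5}, Corollary \ref{cor_intro:1.6}, Theorem \ref{thm_intro:main_currents} and the two cited results of \cite{DGLM}.
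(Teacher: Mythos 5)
Your proposal is correct and matches the paper's own argument, which is given only as a one-sentence citation bundle: Corollary~\ref{cor_intro:1.7} is deduced from Corollaries~\ref{cor_intro:1.5} and~\ref{cor_intro:1.6}, the main theorem of~\cite{Parreau03}, and~\cite[Proposition~2.2.1 and Lemma~2.0.1]{DGLM}, exactly as you lay out. The subtlety you flag in Part A dissolves as you suspect: with the chosen Weyl-group invariant norms (namely $\sum x_i$ on the nonnegative Weyl chamber in the symplectic case, and $x_1-x_n$ on the trace-zero chamber in the $\SL(n)$ case), $L(g)=0$ forces $\nu(g)=0$, so Parreau's fixed-point theorem applies and yields a point of the building $\,{}^\omega\Xx_\lambda$ itself.
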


\subsection{Outline of the paper}
In Section~\ref{sec:preliminaries} we recall fundamental results of Bonahon recast in our setting.    
Section~\ref{sec:str_geo_curr} is the bulk of the paper.  Given a geodesic current $\mu$, 
we introduce the concept of {\em $\mu$-somewhat short geodesic} and establish
in Section~\ref{subsec:1.0} some basic properties with which we prove in Section~\ref{sec:dec} 
the first part of Theorem~\ref{thm_intro:dec}.
This allows us to define the notion of {\em basic geodesic current}
(Definition~\ref{defn:basic}).
In Section~\ref{subsec:1.2} we show how a $\mu$-somewhat short geodesic leads to a lamination
consisting of $\mu$-somewhat short geodesics (Proposition~\ref{thm:1.5}).
In Section ~\ref{subsec:1.3} we then use a dynamical argument to show that 
the lamination obtained is surface filling.
In Section~\ref{subsec:1.4} we establish a link between vanishing systole and somewhat short geodesics,
using some results of Martone and Zhang, and conclude the proof of Theorem~\ref{thm_intro:main_currents}
 in Section~\ref{subsec:1.5} (see Theorem~\ref{thm:main_currents}).
The case of a thrice punctured sphere requires special treatment in Section~\ref{subsec:thrice}.
In Section~\ref{sec:pos} we establish Theorem~\ref{thm_intro:posSyst} in the introduction; again,
this uses in a crucial way the concept of $\mu$-somewhat short geodesic.
In Section~\ref{sec:5} we prove Corollaries~\ref{cor_intro:1.5} and \ref{cor_intro:1.6}.

\section{Preliminaries on geodesic currents}\label{sec:preliminaries}
Let $\Sigma=\Gamma\backslash \H^2$ be a complete hyperbolic surface of finite area.  
We let $\partial\H^2$ be the boundary of $\H^2$ which is a circle endowed 
with its orientation and corresponding cyclic ordering of triples of points. 
We identify the space of oriented geodesics in $\H^2$ with the space $(\deH^2)^{(2)}$ of pairs of distinct  points. 
A  \emph{geodesic current} is a $\Gamma$-invariant and flip-invariant positive Radon measure on the locally compact space $(\deH^2)^{(2)}$. 
The set $\mathcal C(\Sigma)$ of geodesic currents is then a convex cone in the dual 
of the space of compactly supported functions on $(\deH^2)^{(2)}$; 
the latter is provided with the topology of inductive limit of Banach spaces and $\mathcal C(\Sigma)$ will be equipped with the corresponding weak* topology. 
For $\mu\in\mathcal C(\Sigma)$ we denote by $\supp(\mu)\subseteq (\deH^2)^{(2)}$ its support and call \emph{carrier} of $\mu$, $\carrier(\mu)\subset\Sigma$, 
the closed subset of $\Sigma$ which is the union of all geodesics $g\subset \Sigma$ such that any lift $\wt g\subset \H^2$ lies in $\supp(\mu)$.   
We denote by $\Ccc(\Sigma)$ the set of geodesic currents with compact carrier, 
and by $\CcK(\Sigma)$ the set of those currents with carrier contained in a given compact subset $K\subset \Sigma$. 
For $\gamma\in\G$ hyperbolic let $\g_-$ (resp. $\g_+$) denote the repulsive (resp. attractive) fixed point of $\gamma$ in $\deH^2$. 

\begin{example}\label{ex:2.1}
The following examples of geodesic currents will play an important role in the rest of the paper:
\noindent
\begin{enumerate} 
\item For $\g\in\G$ hyperbolic, we set
$$\delta_\g:=\sum_{\eta\in\G/\<\g\>}\delta_{\eta(\g_-,\g_+)}.$$
Then $\delta_\g$ is a geodesic current whose carrier is the closed geodesic $c\subset \Sigma$ associated to $\g$. We will sometimes denote $\delta_\g$ by $\delta_c$.
\item The Liouville current $\mathcal L$ is the unique (up to positive scaling)  $\PSL(2,\R)$-invariant Radon measure on $(\deH^2)^{(2)}$.  This is $\Gamma$ invariant for every lattice $\Gamma$.
\end{enumerate}
\end{example}

The intersection pairing $i(\mu,\lambda)$ of a current $\mu$ with a current $\lambda$ with compact carrier is defined as follows (see \cite{Bon} for more details): 
the subspace  $\mathcal D\subseteq (\deH^2)^{(2)}\times (\deH^2)^{(2)}$ consisting of pairs of geodesics intersecting in one point is open and 
$\PSL(2,\R)$ acts properly on it: 
indeed the map that associate to a pair $(g,h)$ in $\mathcal D$ the intersection $g\cap h$ gives a $\PSL(2,\R)$-invariant projection of $\mathcal D$ to $\H^2$. 
We restrict the product $\mu\times \lambda$ to $\mathcal D$ and 
define $i(\mu,\lambda)$ as the $(\mu\times \lambda)$-measure of any Borel fundamental domain for the $\G$-action on $\mathcal D$.

\begin{example}
\noindent
\be
\item Given hyperbolic elements $\gamma,\eta\in\G$ representing the closed geodesics $g,h$ in $\Sigma$, the intersection $i(\delta_\g,\delta_\eta)$ is the minimal geometric intersection number between closed curves in the free homotopy class represented by $g$ and $h$. We will often denote it by $i(g,h)$.
\item For an appropriate normalization of the Liouville current $\mathcal L$ we have $i(\mathcal L,\delta_\g)=\ell(\g)$ where $\ell(\g)$ is the hyperbolic length of the closed geodesic represented by $\gamma$.
\ee
\end{example} 
For a general current $\mu$ we will need a formula for the intersection $i(\mu,\delta_\g)$. Given $a,b$ points in $\deH^2$, let $\ioo{a}{b}$ denote the open interval determined by $a,b$, that is
\bq\label{eq:ioo}
\ioo{a}{b}:=\{x\in\deH^2:\,(a,x,b)\text{ is positively oriented}\}.
\eq
The intervals $\ico{a}{b}, \ioc{a}{b}$ and $\icc{a}{b}$ are defined accordingly, so for example
\bqn
\ico{a}{b}:=\{a\}\cup\ioo{a}{b}\,.
\eqn

The following formula for the
intersection of $\mu$ with $\delta_\gamma$ , if $\gamma\in\Gamma$ is a hyperbolic element is well known. A proof can be found in \cite{Martone_Zhang}  for a compact surface $\Sigma$ and goes over verbatim in the case in which  $\Sigma$ has finite area.

\begin{lem}[{\cite[Lemma~4.5]{Martone_Zhang}}]\label{lem:1.21}  If $\gamma\in\Gamma$ is hyperbolic with repulsive and attractive fixed
points $\gamma_-,\gamma_+$ and $z\in\ioo{\gamma_-}{\gamma_+}$, then 
\bqn
i(\mu,\delta_\gamma)=\mu(\ioo{\gamma_+}{\gamma_-}\times\ico{z}{\gamma z})\,.
\eqn
\end{lem}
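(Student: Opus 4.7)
The strategy is to unwind the definition of the intersection pairing by exhibiting an explicit Borel fundamental domain for the diagonal $\Gamma$-action on the set of crossing pairs $\mathcal{D}$ whose second component lies in the support of $\delta_\gamma$. Since $\delta_\gamma = \sum_{\eta \in \Gamma/\langle \gamma \rangle} \delta_{\eta(\gamma_-,\gamma_+)}$, the measure $\mu \times \delta_\gamma$ restricted to $\mathcal{D}$ is concentrated on pairs of the form $((a,b),\,\eta(\gamma_-,\gamma_+))$ where $(a,b)$ transversely crosses the translated axis $\eta(\gamma_-,\gamma_+)$. I will build the fundamental domain by first quotienting by the $\Gamma/\langle \gamma \rangle$ cosets (reducing to the case where the second geodesic is the distinguished axis $(\gamma_-,\gamma_+)$ itself), then quotienting by the stabilizer $\langle \gamma \rangle$ of that axis.

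The first step is the elementary observation that a pair $(a,b) \in (\partial \H^2)^{(2)}$ crosses the axis $(\gamma_-,\gamma_+)$ precisely when $a$ and $b$ lie in opposite components of $\partial \H^2 \setminus \{\gamma_-,\gamma_+\}$. The second step is to notice that $\gamma$ acts freely on each of the two open arcs $\ioo{\gamma_-}{\gamma_+}$ and $\ioo{\gamma_+}{\gamma_-}$ (its only fixed points being on the boundary of the arc), and moreover the half-open arc $\ico{z}{\gamma z}$ is a Borel fundamental domain for the restricted $\langle \gamma \rangle$-action on $\ioo{\gamma_-}{\gamma_+}$. Using the flip-invariance of $\mu$ to fix the convention that the "non-distinguished" endpoint $a$ lies in the arc $\ioo{\gamma_+}{\gamma_-}$, the candidate fundamental domain inside $\mathcal{D}$ is
\[
\mathcal{F} = \bigl\{ ((a,b),(\gamma_-,\gamma_+)) : a \in \ioo{\gamma_+}{\gamma_-},\ b \in \ico{z}{\gamma z}\bigr\}.
\]

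I would then verify carefully that $\mathcal{F}$ meets every $\Gamma$-orbit in $\mathcal{D} \cap \bigl((\partial\H^2)^{(2)} \times \Gamma\cdot(\gamma_-,\gamma_+)\bigr)$ exactly once. The key ingredients are: (i) the stabilizer in $\Gamma$ of the oriented axis $(\gamma_-,\gamma_+)$ is precisely $\langle \gamma \rangle$ (assuming $\gamma$ is primitive; otherwise the argument is adjusted by the index), so the $\Gamma/\langle \gamma \rangle$ reduction uniquely selects $(\gamma_-,\gamma_+)$ as the representative axis; (ii) the $\langle \gamma \rangle$-translation reduction uniquely places the crossing point of $(a,b)$ with the axis in the fundamental segment from $z$ to $\gamma z$, which on the boundary is exactly the condition $b \in \ico{z}{\gamma z}$. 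Once $\mathcal{F}$ is verified to be a fundamental domain, the formula follows by Fubini:
\[
i(\mu,\delta_\gamma) = (\mu \times \delta_\gamma)(\mathcal{F}) = \mu\bigl(\ioo{\gamma_+}{\gamma_-} \times \ico{z}{\gamma z}\bigr).
\]

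The main obstacle, and the point deserving the most care, is the bookkeeping around flip-invariance and primitivity: one must check that taking only $a \in \ioo{\gamma_+}{\gamma_-}$ (rather than symmetrizing over both arcs) together with the half-open convention $\ico{z}{\gamma z}$ accounts for each geometric intersection exactly once, consistently with how $\delta_\gamma$ is normalized in Example~\ref{ex:2.1}(1). The rest of the argument is essentially a measure-theoretic unwinding.
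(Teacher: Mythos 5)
The paper does not actually supply its own proof of this lemma: the preceding text says the result is well known, cites \cite[Lemma~4.5]{Martone_Zhang}, and notes that the argument there for compact $\Sigma$ extends verbatim to finite area. So there is no internal proof to compare against, and I can only assess your argument on its own terms. Your strategy---unwinding Bonahon's definition by exhibiting an explicit Borel fundamental domain---is the right and standard one; the issue is that the set you propose is not actually a fundamental domain under the definitions as the paper states them, and you flag exactly this point but do not resolve it.

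Concretely: take a crossing pair $\bigl((a,b),(\gamma_-,\gamma_+)\bigr)\in\mathcal D$ with $a\in\ioo{\gamma_-}{\gamma_+}$ and $b\in\ioo{\gamma_+}{\gamma_-}$. Its $\Gamma$-orbit never meets your $\mathcal F$, because the only elements of $\Gamma$ preserving the second coordinate $(\gamma_-,\gamma_+)$ lie in $\langle\gamma_0\rangle$ (with $\gamma_0$ the primitive root of $\gamma$), and these preserve the two arcs $\ioo{\gamma_-}{\gamma_+}$ and $\ioo{\gamma_+}{\gamma_-}$ separately. Flipping the first coordinate is not part of the diagonal $\Gamma$-action on $\mathcal D$, so ``flip-invariance of $\mu$'' cannot be invoked to discard these orbits. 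A genuine fundamental domain for $\Gamma$ on $\mathcal D\cap\bigl((\deH^2)^{(2)}\times\supp\,\delta_\gamma\bigr)$ is the disjoint union of $\{a\in\ioo{\gamma_+}{\gamma_-},\ b\in\ico{z}{\gamma z}\}$ with $\{a\in\ico{z}{\gamma z},\ b\in\ioo{\gamma_+}{\gamma_-}\}$ (second coordinate $(\gamma_-,\gamma_+)$ in both), and its $\mu\times\delta_\gamma$-mass is $2\,\mu\bigl(\ioo{\gamma_+}{\gamma_-}\times\ico{z}{\gamma z}\bigr)$ by flip-invariance of $\mu$ --- twice the claimed answer. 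The discrepancy traces back to a normalization that the paper leaves implicit: as written in Example~\ref{ex:2.1}(1), $\delta_\gamma$ is supported on a single orientation of each translated axis and is therefore not flip-invariant, hence not literally a geodesic current by the paper's own definition; the stated formula is correct under the unoriented-geodesic convention that Bonahon and Martone--Zhang actually use. You explicitly call this ``the main obstacle'' and then defer to ``essentially a measure-theoretic unwinding''---but this unwinding is where the content of the lemma lies, and it is not automatic. To close the gap you would need to fix the normalization of $\delta_\gamma$ and of the intersection pairing on the space of unoriented geodesics (or, equivalently, carry the factor of $2$ through consistently and check it matches identities like $i(\calL,\delta_\gamma)=\ell(\gamma)$), and then re-derive the fundamental-domain description in that setting.
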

\noindent In particular $i(\mu,\delta_\gamma)$ is always finite.

One of the most fundamental facts concerning the intersection is the following continuity property due to Bonahon:
\begin{thm}[{\cite[Section 4.2]{B1}}]
For every compact subset $K\subset \Sigma$, the intersection
$$i:\mathcal C(\Sigma)\times \CcK(\Sigma)\to \R_{\geq0}$$
is continuous.
\end{thm}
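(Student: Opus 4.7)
The plan is to reduce the continuity of the intersection pairing to the joint weak-$*$ continuity of the Radon product measure $\mu\otimes\lambda$, by expressing $i(\mu,\lambda)$ as a single integral against a compactly supported test function on $(\deH^2)^{(2)}\times(\deH^2)^{(2)}$.

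First I would construct a continuous $\Gamma$-partition of unity on $\H^2$: a continuous $\psi\colon\H^2\to[0,1]$ with compact support $A$ satisfying $\sum_{\gamma\in\Gamma}\psi(\gamma x)\equiv 1$ for every $x\in\H^2$; such $\psi$ exists because $\Gamma$ acts properly discontinuously. Set
\[
F(g_1,g_2):=\psi(g_1\cap g_2)\quad\text{for }(g_1,g_2)\in\mathcal D.
\]
Then $F$ is continuous on $\mathcal D$, and it is supported on pairs of geodesics whose unique intersection point lies in $A$. Since the set of geodesics crossing the compact set $A$ is itself compact in $(\deH^2)^{(2)}$, the support of $F$ is relatively compact in $(\deH^2)^{(2)}\times(\deH^2)^{(2)}$.

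Next I would check the integral identity
\[
i(\mu,\lambda)=\int_{\mathcal D}F\,d(\mu\otimes\lambda).
\]
If $\Omega\subset\mathcal D$ is any Borel fundamental domain for the diagonal $\Gamma$-action, the definition gives $i(\mu,\lambda)=(\mu\otimes\lambda)(\Omega)$. Using $\Gamma$-invariance of $\mu\otimes\lambda$, properness of the action (which makes the sum locally finite at each point of $\mathcal D$), and the identity $\sum_\gamma F\circ\gamma\equiv 1$ on $\mathcal D$, a standard unfolding gives
\[
\int_{\mathcal D}F\,d(\mu\otimes\lambda)=\int_{\Omega}\sum_\gamma F(\gamma\cdot)\,d(\mu\otimes\lambda)=(\mu\otimes\lambda)(\Omega).
\]

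Finally, I would apply joint weak-$*$ continuity of Radon product measures to $F$: if $(\mu_n,\lambda_n)\to(\mu,\lambda)$ in $\mathcal C(\Sigma)\times\CcK(\Sigma)$, then $\int F\,d(\mu_n\otimes\lambda_n)\to\int F\,d(\mu\otimes\lambda)$, yielding $i(\mu_n,\lambda_n)\to i(\mu,\lambda)$. Joint weak-$*$ continuity is obtained by uniformly approximating $F$ on its compact support by finite sums $\sum_i f_i(g_1)h_i(g_2)$ of tensor products of continuous compactly supported functions via Stone--Weierstrass, combined with the automatic uniform bounds $\sup_n\mu_n(B),\sup_n\lambda_n(B)<\infty$ on compacts $B$ that follow from weak-$*$ convergence.

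The main obstacle is ensuring that $F$ actually extends from $\mathcal D$ to a \emph{continuous} compactly supported function on the ambient product space: two distinct geodesics can approach a common limit while keeping their intersection fixed in $A$, so the intersection map $\mathcal D\to\H^2$ is not proper, and naive extension of $F$ by zero at diagonal limit points is only upper semicontinuous. One resolves this by multiplying $\psi(g_1\cap g_2)$ by a continuous cutoff depending on the angle of intersection (vanishing as the two geodesics coincide) and appropriately renormalizing so that the unfolding identity is preserved. The hypothesis $\lambda\in\CcK(\Sigma)$ enters precisely here: it confines the relevant mass of $\lambda$ to the $\Gamma$-orbit of a fixed compact subset of $(\deH^2)^{(2)}$, keeping the estimate uniform and allowing the cutoff error to be controlled uniformly in $\lambda_n$.
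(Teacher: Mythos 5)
The overall framework you set up---unfolding the intersection via a $\Gamma$-partition of unity on $\H^2$, rewriting $i(\mu,\lambda)=\int_{\mathcal D} F\,d(\mu\otimes\lambda)$, and invoking weak-$*$ continuity of the product---is the right one and is indeed the skeleton of Bonahon's argument in \cite[Section 4.2]{B1}, which the paper cites rather than reproves. You also correctly locate the crux: $F(g_1,g_2)=\psi(g_1\cap g_2)$ does \emph{not} extend continuously to $(\deH^2)^{(2)}\times(\deH^2)^{(2)}$, because a sequence of transverse pairs can converge to a pair sharing an endpoint (or coinciding) while the intersection point stays in $A$, so $\overline{\supp F}$ meets $\partial\mathcal D$.

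The gap is in your proposed repair, which as stated is circular. If you set $F_\epsilon(g_1,g_2)=\psi(g_1\cap g_2)\,\chi_\epsilon\bigl(\angle(g_1,g_2)\bigr)$ with $\chi_\epsilon$ vanishing for small angles, then since the angle is $\Gamma$-invariant, $\sum_{\gamma\in\Gamma}F_\epsilon(\gamma g_1,\gamma g_2)=\chi_\epsilon(\angle(g_1,g_2))$. Restoring the identity $\sum_\gamma F\circ\gamma\equiv1$ forces you to divide by $\chi_\epsilon(\angle(g_1,g_2))$, which returns exactly the original discontinuous $F$. So no ``renormalization'' produces a continuous compactly supported kernel that still unfolds to $i(\mu,\lambda)$. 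What you actually get from $F_\epsilon$ is $\int F_\epsilon\,d(\mu\otimes\lambda)\le i(\mu,\lambda)$ with monotone convergence as $\epsilon\to0$, which only gives lower semicontinuity. The hard content of the theorem is the matching upper bound: a uniform tail estimate of the form
\[
\sup_n\ (\mu_n\otimes\lambda_n)\bigl(\{(g_1,g_2)\in\mathcal D:\ g_1\cap g_2\in A,\ \angle(g_1,g_2)<\epsilon\}\bigr)\longrightarrow 0\quad(\epsilon\to0),
\]
along convergent sequences $\mu_n\to\mu$, $\lambda_n\to\lambda$. Your proposal asserts that the hypothesis $\lambda\in\CcK(\Sigma)$ makes this ``controlled uniformly'' but gives no mechanism; the $\CcK$ hypothesis confines $\supp\lambda$ to a $\Gamma$-cocompact set of geodesics and guarantees finiteness, but it does not by itself prevent small-angle intersections or bound their mass---consider $\lambda$ a simple closed geodesic and $\mu_n$ currents with more and more mass on geodesics spiraling towards it. That uniform small-angle estimate is precisely what Bonahon proves, using the geometry of nearly tangent geodesics (two geodesics crossing at small angle fellow-travel through many fundamental domains, and this is leveraged against the finiteness of the measures on compacts). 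Without that estimate, the passage from ``weak-$*$ continuity against compactly supported continuous test functions'' to ``continuity of $i$'' does not go through.
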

This continuity property implies a certain number of compactness criteria which will be useful in Section~\ref{sec:pos}. Recall that a current $\mu$ is \emph{surface filling} if every geodesic intersects some geodesic in the support of $\mu$. The Liouville current is surface filling. For an example with compact carrier take
$$\lambda=\sum_{i=1}^{3g-3+p}\delta_{c_i}+\sum_{i=1}^{3g-3+p}\delta_{h_i}$$
where $\{c_1,\ldots,c_{3g-3+p}\}$ and  $\{h_1,\ldots, h_{3g-3+p}\}$ are the simple closed geodesics corresponding to two dual decompositions of $\Sigma$ into pairs of pants.
The arguments in \cite[Proposition 4]{B2} then imply:
\begin{lem}
Let $K\subset \Sigma$ be compact, and $\lambda\in\Ccc(\Sigma)$ be a surface filling current with compact support. Then the following sets are compact:
\begin{enumerate}
\item $\mathcal C(\Sigma)_{\lambda}=\{\mu\in\mathcal C(\Sigma)|\; i(\mu,\lambda)=1\}$;
\item $\CcK(\Sigma)_{\calL}=\{\nu\in\CcK(\Sigma)|\; i(\calL,\nu)=1\}$. Here $\calL$, as in Example \ref{ex:2.1}(2) denotes the Liouville current.
\end{enumerate} 
\end{lem}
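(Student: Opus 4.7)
The plan is to reduce both claims to the two standard ingredients for weak-$*$ compactness in $\Cc(\Sigma)$ provided by Banach--Alaoglu: closedness of the level set, and a uniform local bound $\sup \mu(A) < \infty$ for every compact $A \subset (\deH^2)^{(2)}$. Closedness is immediate from the continuity theorem of Bonahon stated above: the functionals $\mu \mapsto i(\mu,\lambda)$ on $\Cc(\Sigma)$ and $\nu \mapsto i(\calL,\nu)$ on $\CcK(\Sigma)$ are continuous (the first because $\lambda$ has compact carrier, the second because $\nu$ is constrained to have carrier in $K$), so their preimages of $\{1\}$ are closed. One should also verify in case~(2) that $\CcK(\Sigma)$ is itself closed in $\Cc(\Sigma)$, which follows from the fact that the carrier constraint is preserved under weak-$*$ limits.

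The heart of the proof is the uniform local bound, via the box argument of \cite[Proposition~4]{B2}. Given a point $g_0 = (\alpha,\beta) \in (\deH^2)^{(2)}$, I would first produce a transverse geodesic $g_1 = (a,b)$ belonging to the relevant ``test'' support: in case~(1) the surface-filling property of $\lambda$ supplies $g_1 \in \supp(\lambda)$, while in case~(2) the full support of $\calL$ in $(\deH^2)^{(2)}$ allows any transverse choice. Using that transversality is an open condition and that the diagonal $\Gamma$-action on the set $\mathcal D$ of transverse pairs is properly discontinuous (as the intersection map $\mathcal D \to \H^2$ is $\Gamma$-equivariant), I choose open product boxes $U \ni g_0$ and $V \ni g_1$ in $(\deH^2)^{(2)}$ such that every geodesic in $U$ crosses every geodesic in $V$ and such that $U \times V \subset \mathcal D$ injects into $\mathcal D/\Gamma$. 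Then $U \times V$ sits inside some Borel fundamental domain for $\Gamma$ on $\mathcal D$, so
\[
i(\mu,\lambda) \;\geq\; (\mu \times \lambda)(U \times V) \;=\; \mu(U)\,\lambda(V),
\]
and analogously $i(\calL,\nu) \geq \calL(V)\,\nu(U)$ in case~(2). Since $g_1 \in \supp(\lambda)$, resp.\ $g_1 \in \supp(\calL) = (\deH^2)^{(2)}$, the set $V$ has strictly positive mass for the test current, yielding $\mu(U) \leq 1/\lambda(V)$, resp.\ $\nu(U) \leq 1/\calL(V)$. A finite subcover of any compact $A$ by such boxes then gives the desired uniform bound on $\mu(A)$, resp.\ $\nu(A)$, and Banach--Alaoglu combined with closedness concludes the proof.

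The main technical obstacle is the simultaneous arrangement of the box $U \times V$ so that all three constraints hold at once: its pairs are transverse, its image in $\mathcal D/\Gamma$ is injective, and $V$ still carries positive test mass. The first two are routine consequences of openness of transversality and proper discontinuity of the diagonal action, while the third is precisely the place where the surface-filling hypothesis in~(1) is needed; in~(2) it is automatic from full support of $\calL$.
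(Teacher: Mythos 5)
Your proof is correct and follows essentially the same route as the paper, which simply cites ``the arguments in [Bonahon, Prop.\ 4]''; your write-up spells out that box argument in full (closedness via continuity of intersection plus a uniform local bound via injective transverse boxes and Banach--Alaoglu), including the one non-automatic point — that surface-fillingness of $\lambda$ is exactly what guarantees a positive-mass transverse box $V$ at every $g_0$, while for the Liouville current this is automatic from full support.
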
 
As a corollary one obtains (\cite{B2}):
\begin{prop}
The quotient space $\P\calC(\Sigma)$ of $\calC(\Sigma)^*$ by positive scalar multiplication is compact.
\end{prop}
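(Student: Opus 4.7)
The plan is to identify $\P\calC(\Sigma)$ with the compact slice $\calC(\Sigma)_\lambda$ supplied by the preceding lemma, using the surface filling current $\lambda = \sum_i \delta_{c_i} + \sum_i \delta_{h_i}$ as a canonical normalization. Concretely, I would show that every nonzero current $\mu$ satisfies $i(\mu,\lambda) > 0$, so that the rescaling $\mu \mapsto \mu/i(\mu,\lambda)$ defines a continuous $\R_{>0}$-invariant retraction from $\calC(\Sigma)^*$ onto $\calC(\Sigma)_\lambda$, and then deduce that this retraction factors through a continuous bijection $\P\calC(\Sigma) \to \calC(\Sigma)_\lambda$.

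The main step is the strict positivity $i(\mu,\lambda) > 0$. Given $\mu \neq 0$, I would pick a point $(a,b) \in \supp(\mu)$; since $\lambda$ is surface filling the geodesic $(a,b)$ meets transversally some geodesic $(c,d) \in \supp(\lambda)$. Thus the pair $((a,b),(c,d))$ lies in the open locus $\calD \subset (\deH^2)^{(2)} \times (\deH^2)^{(2)}$ of transversally intersecting geodesics, so a sufficiently small product neighborhood $U \times V$ of this pair is contained in $\calD$ and, after further shrinking, injects into a Borel fundamental domain for the diagonal $\G$-action on $\calD$. By definition of the support, $\mu(U) > 0$ and $\lambda(V) > 0$, so
$$i(\mu,\lambda) \geq (\mu \times \lambda)(U \times V) = \mu(U)\,\lambda(V) > 0.$$

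Given this positivity, the map $r : \calC(\Sigma)^* \to \calC(\Sigma)_\lambda$ sending $\mu$ to $\mu/i(\mu,\lambda)$ is well-defined and continuous, the continuity coming from Bonahon's continuity theorem applied to the compactly carried $\lambda$. It is a retraction onto $\calC(\Sigma)_\lambda$ and is constant on $\R_{>0}$-orbits, so it descends to a continuous bijection $\bar r : \P\calC(\Sigma) \to \calC(\Sigma)_\lambda$. Conversely the composition $\calC(\Sigma)_\lambda \hookrightarrow \calC(\Sigma)^* \twoheadrightarrow \P\calC(\Sigma)$ is continuous and inverts $\bar r$. Since $\calC(\Sigma)_\lambda$ is compact by the preceding lemma, the same holds for $\P\calC(\Sigma)$.

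The only genuine subtlety is the strict positivity of $i(\mu,\lambda)$: the surface filling hypothesis on $\lambda$ only provides intersection at the level of individual geodesics, and one must upgrade this to positive mass for the product measure $\mu \times \lambda$, which is the content of the openness of $\calD$ together with the defining property of the support of a Radon measure. Everything else is a formal consequence of the compactness lemma for $\calC(\Sigma)_\lambda$.
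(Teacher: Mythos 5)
Your proposal is correct and is exactly the argument implicitly intended by the paper, which presents the proposition as a corollary of the preceding lemma on the compactness of $\calC(\Sigma)_\lambda$ (citing Bonahon for the details). You correctly identify and fill in the one nontrivial step, namely that $i(\mu,\lambda)>0$ for every nonzero current $\mu$, which is what makes the normalization map $\mu\mapsto\mu/i(\mu,\lambda)$ well defined and exhibits $\P\calC(\Sigma)$ as the continuous image of the compact slice. One small wording caveat: the phrase ``injects into a Borel fundamental domain'' is better stated as ``maps injectively into $\calD/\Gamma$'' (possible because $\Gamma$ is torsion-free and the action on $\calD$ is properly discontinuous), after which one may choose a Borel fundamental domain containing $U\times V$; the inequality $i(\mu,\lambda)\ge(\mu\times\lambda)(U\times V)$ then follows. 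This is a cosmetic adjustment and does not affect the validity of the argument.
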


Recall that a measured lamination on the surface $\Sigma$ is the datum $(\Lambda, m)$ of a closed subset $\Lambda$ of $\Sigma$ foliated by geodesics 
and a measure $m$ on  arcs transverse to $\Lambda$ that is invariant under transverse homotopy.
Let then $(\Lambda,m)$ be a measured lamination with compact support in $\Sigma$, 
and let $(\wt \Lambda,\wt m)$ be its  lift to a measured geodesic  $\G$-invariant lamination in $\H^2$. 
Given any two intervals $\ioo{a}{b}$, $\ioo{c}{d}$ with disjoint closure, there is an open geodesic arc $k\subset \H^2$ such that 
$$\{\l\subset\wt\Lambda|\; |\lambda\cap k|=1\}=\{\lambda\subset \wt\L|\; \lambda\in\ioo{a}{b}\times\ioo{c}{d}\}.$$

Setting $\mu(\ioo{a}{b}\times\ioo{c}{d})=\wt m(k)$ one obtains a geodesic current with $\carrier(\mu)\subset \L$. 
As explained in \cite{B2} this establishes a bijection between measured geodesic laminations with compact support 
and geodesic currents with compact carrier and vanishing self-intersection. We will denote
$$\MLc(\Sigma)=\{\alpha\in\Ccc(\Sigma)|\; i(\alpha,\alpha)=0\}$$
this subspace of currents. We have 
\begin{lem}
\noindent
\be
\item There is a compact subset $K\subset \Sigma$ such that $\MLc(\Sigma)\subset \CcK(\Sigma)$ and consequently $\MLc(\Sigma)$ is a closed subset.
\item For a surface filling current $\lambda$ with compact support
$$\MLc(\Sigma)_{\lambda}:=\MLc(\Sigma)\cap \calC(\Sigma)_\lambda$$
is compact.
\ee
\end{lem}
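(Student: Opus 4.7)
The plan is to prove (1) first and then deduce (2) immediately by combining it with the previous lemma.

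For (1), the main point is to control where the leaves of a compactly supported measured lamination can go inside the cusps of $\Sigma$. Let $H_1,\dots,H_p$ be disjoint embedded horocyclic cusp neighborhoods, and choose them small enough that any complete geodesic arc in $\Sigma$ whose interior lies in some $H_j$ wraps around the cusp at least twice, and hence self-intersects in $\Sigma$. Concretely, in the model $\{(x,y):y>y_0\}/\langle z\mapsto z+1\rangle$, the geodesic arc in $\H^2$ joining two points on the horocycle $y=y_0$ that reaches height $Y$ has horizontal span growing like $Y$, so for $Y$ large enough the arc projects to a curve with transverse self-intersections. Now let $(\Lambda,m)$ be a measured lamination compactly supported in $\Sigma$ with associated current $\mu$. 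If a leaf of $\Lambda$ entered $H_j$ beyond the critical height, it would self-intersect transversely in $\Sigma$; a local computation on the transverse measure then gives $i(\mu,\mu)>0$, contradicting $\mu\in\MLc(\Sigma)$. Thus $\carrier(\mu)$ lies in the compact set $K:=\Sigma\setminus\bigcup_j H_j^\circ$, so that $\MLc(\Sigma)\subset\CcK(\Sigma)$.

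For closedness, observe first that $\CcK(\Sigma)$ is closed in $\calC(\Sigma)$: it is cut out by the vanishing of $\mu$ on the open set of geodesics meeting $\Sigma\setminus K$, and this condition is preserved under weak*-limits. Then by Bonahon's continuity theorem recalled above, the map $\mu\mapsto i(\mu,\mu)$ is continuous on $\CcK(\Sigma)$, so $\MLc(\Sigma)=\{\mu\in\CcK(\Sigma):i(\mu,\mu)=0\}$ is closed in $\CcK(\Sigma)$, hence closed in $\calC(\Sigma)$.

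For (2), the set $\calC(\Sigma)_\lambda$ is compact by the previous lemma (since $\lambda$ is surface filling with compact support), and we have just shown that $\MLc(\Sigma)$ is closed. Therefore $\MLc(\Sigma)_\lambda=\MLc(\Sigma)\cap\calC(\Sigma)_\lambda$ is a closed subset of a compact space, hence compact.

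The main obstacle is the geometric cusp estimate in part (1): one needs to choose the horocyclic neighborhoods carefully so that every geodesic arc fully contained in them produces a genuine transverse self-intersection with positive $(\mu\times\mu)$-mass in a fundamental domain of the action on $\calD$. When $\Sigma$ is compact this step is vacuous and one simply takes $K=\Sigma$; in the finite-area case it is the only nontrivial input.
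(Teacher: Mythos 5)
The paper states this lemma without proof, so there is nothing to compare against. Your argument is correct and is the standard one. The geometric input in part (1) --- a complete geodesic in the carrier of a compactly carried current that penetrates a horocyclic cusp neighborhood deeply enough must self-intersect on $\Sigma$, so a current $\mu$ with $i(\mu,\mu)=0$ cannot have its carrier leave a fixed compact $K$ --- is exactly right, and is vacuous when $\Sigma$ is compact as you note. The closedness argument (weak*-closedness of $\CcK(\Sigma)$ by inner regularity, then continuity of $\mu\mapsto i(\mu,\mu)$ on $\CcK(\Sigma)$ from Bonahon's theorem) and the deduction of (2) from the preceding compactness lemma are also correct.

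The one step you compress is the passage from ``a self-intersecting geodesic in $\carrier(\mu)$'' to ``$i(\mu,\mu)>0$''. Since $\MLc(\Sigma)$ is \emph{defined} as the set of compactly carried currents with $i(\mu,\mu)=0$, and the identification with measured laminations is only stated afterwards, one should argue directly from the current: if $g=\p_\Sigma(\wt g)$ self-intersects then $\gamma\wt g\pitchfork\wt g$ for some $\gamma\in\Gamma$, and both $\wt g$ and $\gamma\wt g$ lie in $\supp(\mu)$; thus $\supp(\mu\times\mu)$ meets the open set $\mathcal D$, and since $\mathcal D$ is tiled up to a null set by the countably many $\Gamma$-translates of a Borel fundamental domain $D$ and $\mu\times\mu$ is diagonally $\Gamma$-invariant, this forces $(\mu\times\mu)(D)=i(\mu,\mu)>0$. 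Your phrase ``a local computation on the transverse measure'' is morally this, but could mislead a reader into thinking you are presupposing $\mu$ is already a lamination.
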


\begin{defn}\label{defn:systole}  The {\em systole} of the geodesic current $\mu$ is 
\bqn
\Syst(\mu):=\inf\{i(\mu,\delta_\gamma):\,\gamma\in\Gamma\text{ is hyperbolic}\}\,.
\eqn
\end{defn}
\section{Structure of geodesic currents with vanishing systole}\label{sec:str_geo_curr}
The aim of this chapter is to prove Theorem \ref{thm:main_currents}, a refined version of the decomposition theorem for geodesic currents announced in the introduction (Theorem \ref{thm_intro:dec}).  
We fix a geodesic current $\mu$. 
Motivated by the characterization of closed geodesics $c$ whose intersection with a current $\mu$ is trivial (Lemma \ref{lem:1.22}), 
we introduce in \S~\ref{subsec:1.0} the key notion of $\mu$-somewhat short geodesics, 
and establish basic properties about the structure of $\mu$-somewhat short geodesics. 
These results allow us in \S~\ref{sec:dec} to prove Theorem~\ref{thm:decBP} a generalization to the setting of geodesic currents of \cite[Theorem 1.1]{BP}.
In \S~\ref{subsec:1.2} we establish that 
if there is a $\mu$-somewhat short geodesic, there is also a $\mu$-somewhat short geodesic that is simple,
namely whose projection to the surface $\Sigma$ if not self-intersecting.
The closure of the $\Gamma$ orbit of a $\mu$-somewhat short geodesic is then a lamination consisting of $\mu$-somewhat short geodesics.
In \S~\ref{subsec:1.3} we analyse the structure of the lamination obtained in \S~\ref{subsec:1.2} 
and prove the implication \eqref{item:main_currents(2)}$\Rightarrow$\eqref{item:main_currents(3)}
in Theorem~\ref{thm:main_currents}.  In Proposition~\ref{prop:1.23} we adapt certain results of
\cite{Martone_Zhang} to our context and prove the implication \eqref{item:main_currents(1)}$\Rightarrow$\eqref{item:main_currents(2)}.
We also show there  that \eqref{item:main_currents(3)} implies \eqref{item:main_currents(1)}, which is elementary and well known (cfr. for example \cite[Section~6.4]{Morzadec}).

\subsection{Somewhat short geodesics}\label{subsec:1.0}

From Lemma~\ref{lem:1.21} we deduce the following useful characterization of the closed curves $\gamma\subseteq \Sigma$ that have vanishing intersection with the current $\mu$:

\begin{lem}\label{lem:1.22}  The intersection $i(\mu,\delta_\gamma)$ vanishes if and only if $\mu(\ioo{\gamma_-}{\gamma_+}\times\ioo{\gamma_+}{\gamma_-})=0$.
\end{lem}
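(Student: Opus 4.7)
The plan is to combine the formula from Lemma~\ref{lem:1.21} with the flip- and $\Gamma$-invariance of $\mu$ to express $\mu(\ioo{\gamma_-}{\gamma_+}\times\ioo{\gamma_+}{\gamma_-})$ as a countable sum of copies of $i(\mu,\delta_\gamma)$, from which the equivalence is immediate.

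First, fix $z\in\ioo{\gamma_-}{\gamma_+}$. By flip-invariance of $\mu$, Lemma~\ref{lem:1.21} can be rewritten as
$$i(\mu,\delta_\gamma)=\mu\bigl(\ico{z}{\gamma z}\times\ioo{\gamma_+}{\gamma_-}\bigr).$$
Since $\gamma$ acts on $\partial\H^2$ as a hyperbolic translation with repulsive fixed point $\gamma_-$ and attractive fixed point $\gamma_+$, the orbit $(\gamma^n z)_{n\in\Z}$ lies in $\ioo{\gamma_-}{\gamma_+}$, is monotone in the positive cyclic order, and accumulates on $\gamma_-$ as $n\to-\infty$ and on $\gamma_+$ as $n\to+\infty$. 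Consequently the half-open intervals $\ico{\gamma^n z}{\gamma^{n+1}z}$ form a disjoint partition
$$\ioo{\gamma_-}{\gamma_+}=\bigsqcup_{n\in\Z}\ico{\gamma^n z}{\gamma^{n+1}z}.$$

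Next, by countable additivity of $\mu$ we obtain
$$\mu\bigl(\ioo{\gamma_-}{\gamma_+}\times\ioo{\gamma_+}{\gamma_-}\bigr)=\sum_{n\in\Z}\mu\bigl(\ico{\gamma^n z}{\gamma^{n+1}z}\times\ioo{\gamma_+}{\gamma_-}\bigr).$$
The element $\gamma^n\in\Gamma$ fixes setwise both $\ioo{\gamma_+}{\gamma_-}$ and maps $\ico{z}{\gamma z}$ to $\ico{\gamma^n z}{\gamma^{n+1} z}$; by $\Gamma$-invariance of $\mu$ each summand therefore equals $i(\mu,\delta_\gamma)$.

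It follows that the right-hand side of the displayed equation above is either $0$ (when $i(\mu,\delta_\gamma)=0$, in which case every term vanishes) or $+\infty$ (when $i(\mu,\delta_\gamma)>0$). Either way, $\mu(\ioo{\gamma_-}{\gamma_+}\times\ioo{\gamma_+}{\gamma_-})=0$ if and only if $i(\mu,\delta_\gamma)=0$, proving the lemma. There is no real obstacle here; the one point that requires care is selecting $z$ on $\partial\H^2$ (rather than on the axis of $\gamma$) so that the tiling argument on the circle applies cleanly.
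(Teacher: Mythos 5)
Your proof is correct and follows the same approach as the paper's: both rewrite $\mu(\ioo{\gamma_-}{\gamma_+}\times\ioo{\gamma_+}{\gamma_-})$ as a $\Z$-indexed sum of equal terms, each of which is $i(\mu,\delta_\gamma)$ by Lemma~\ref{lem:1.21}, and conclude by $\sigma$-additivity. Your version is in fact a touch more careful: by applying flip-invariance up front and invoking $\Gamma$-invariance of $\mu$ to identify the summands, you keep the two factors consistent throughout, whereas the paper's displayed identity has the two intervals swapped in one spot.
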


\begin{proof}  We have that 
\bqn
 i(\mu,\delta_\gamma)= \mu(\ioo{\gamma_+}{\gamma_-}\times\ioc{z}{\gamma z})
=\mu(\ioo{\gamma_+}{\gamma_-}\times\ioc{\gamma^nz}{\gamma^{n+1} z})
\eqn
for every $n\in\Z$.  The lemma follows then from the $\sigma$-additivity of $\mu$:
\bqn
\mu(\ioo{\gamma_-}{\gamma_+}\times\ioo{\gamma_+}{\gamma_-})
=\sum_{n\in\Z}\mu(\ioo{\gamma_-}{\gamma_+}\times\ioc{\gamma^nz}{\gamma^{n+1} z})\,.
\eqn
\end{proof}
A key tool for this paper is the concept of $\mu$-somewhat short geodesic:

\begin{defn}\label{defn: somewhat short}  Let $\mu$ be a geodesic current.
A geodesic $(a,b)\in(\deH^2)^{(2)}$ is {\em $\mu$-somewhat short} if
\bqn
\mu(\ioo{a}{b}\times\ioo{b}{a})=0\,.
\eqn
\end{defn}
\begin{remark}
It follows from Lemma \ref{lem:1.22} that the axis $(\gamma_-,\gamma_+)$ of an hyperbolic element is $\mu$-somewhat short if and only if $i(\mu,\delta_\g)=0$.
\end{remark}

\begin{lem}\label{lem:convergence of ss} Let $\mu$ be a geodesic current.
The subset of $(\deH^2)^{(2)}$ of $\mu$-somewhat short geodesics is closed.
In particular, if $\gamma$ is hyperbolic and $(\gamma_-,b)$ is $\mu$-somewhat short, then $(\gamma_-,\gamma_+)$
is $\mu$-somewhat short.
\end{lem}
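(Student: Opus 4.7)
The plan is to deduce both statements from properties of the rectangles $\ioo{a}{b}\times\ioo{b}{a}$ with respect to the Radon measure $\mu$, and then derive the second statement from the first by an orbit-convergence argument.

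For the closedness part, I would take a sequence $(a_n,b_n)\to(a,b)$ in $(\deH^2)^{(2)}$ of $\mu$-somewhat short geodesics, and show that $\mu(\ioo{a}{b}\times\ioo{b}{a})=0$ by inner regularity. Pick any compact subset $K\subset\ioo{a}{b}\times\ioo{b}{a}$. Its two projections $K_1,K_2$ are compact subsets of the open arcs $\ioo{a}{b}$ and $\ioo{b}{a}$ respectively, hence bounded away from the endpoints $a,b$ in $\deH^2$. Since $a_n\to a$ and $b_n\to b$, for $n$ large enough the arc $\ioo{a_n}{b_n}$ still contains $K_1$ and $\ioo{b_n}{a_n}$ still contains $K_2$, so
\[
K\subset \ioo{a_n}{b_n}\times\ioo{b_n}{a_n},
\]
forcing $\mu(K)\leq\mu(\ioo{a_n}{b_n}\times\ioo{b_n}{a_n})=0$. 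As this holds for every compact $K$ inside the open set $\ioo{a}{b}\times\ioo{b}{a}$, inner regularity of the Radon measure $\mu$ gives $\mu(\ioo{a}{b}\times\ioo{b}{a})=0$, i.e.\ $(a,b)$ is $\mu$-somewhat short. The one point to verify carefully is that the arcs really do converge from the inside in the sense above, but this is just the observation that the cyclic order on $\deH^2$ is continuous and $K_i$ stays in the interior of the limit arcs.

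For the second statement, I would combine the first part with the $\Gamma$-invariance of $\mu$ and the north–south dynamics of $\gamma$. Since $\mu$ is $\Gamma$-invariant, applying $\gamma^n$ to $(\gamma_-,b)$ yields $(\gamma^n\gamma_-,\gamma^n b)=(\gamma_-,\gamma^n b)$, and the identity
\[
\mu\bigl(\ioo{\gamma_-}{\gamma^n b}\times\ioo{\gamma^n b}{\gamma_-}\bigr)
=\mu\bigl(\gamma^n\ioo{\gamma_-}{b}\times\gamma^n\ioo{b}{\gamma_-}\bigr)
=\mu\bigl(\ioo{\gamma_-}{b}\times\ioo{b}{\gamma_-}\bigr)=0
\]
shows that each $(\gamma_-,\gamma^n b)$ is $\mu$-somewhat short. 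Because $b\neq\gamma_-$ and $\gamma$ is hyperbolic with attractive fixed point $\gamma_+$, we have $\gamma^n b\to\gamma_+$, so $(\gamma_-,\gamma^n b)\to(\gamma_-,\gamma_+)$ in $(\deH^2)^{(2)}$. Applying the closedness proved in the first part concludes that $(\gamma_-,\gamma_+)$ is $\mu$-somewhat short.

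The only mild obstacle is bookkeeping the arcs in the closedness step: one must ensure that, as $(a_n,b_n)\to(a,b)$, the ``correct'' component of $\deH^2\setminus\{a_n,b_n\}$ (the one converging to $\ioo{a}{b}$) is being used, which is automatic from the continuity of the cyclic order once $n$ is large. The rest is a clean combination of Radon regularity, invariance of $\mu$, and hyperbolic dynamics.
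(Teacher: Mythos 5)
Your proof is correct and follows essentially the same approach as the paper: the closedness is established by inner regularity of the Radon measure $\mu$, observing that the open rectangles $\ioo{a_n}{b_n}\times\ioo{b_n}{a_n}$ eventually contain any compact subset of $\ioo{a}{b}\times\ioo{b}{a}$ (the paper phrases this via quadruples $(x,y,z,t)$ with $(x,a,y,z,b,t)$ positively oriented rather than compact sets, but the content is identical). The paper leaves the ``in particular'' clause unproved; your argument via $\Gamma$-invariance of $\mu$, the identity $\gamma^n\gamma_-=\gamma_-$, and north--south dynamics $\gamma^n b\to\gamma_+$ is the intended one and correctly fills in that gap.
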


\begin{proof}  Assume that $(a_n,b_n)$ is a sequence of $\mu$-somewhat short geodesics

\noindent
\begin{minipage}{.5\textwidth}
converging to the geodesic $(a,b)$.  
Let $x,y,z,t\in\deH^2$ be such that $(x,a,y,z,b,t)$ is positively oriented.
Then for $n$ large enough we have that
$a_n\in \ioo{x}{y}$ and $b_n\in\ioo{z}{t}$. 
\end{minipage}
\begin{minipage}{.5\textwidth}
\begin{center}
\begin{tikzpicture}[scale=.6]
\draw (0,0) circle [radius=2];
\filldraw (-1.7,-1.1) circle [radius=1pt] ;
\draw (-2.1,-1.1) node {$a$};
\filldraw (1.7,1.1) circle [radius=1pt] ;
\draw (1.9,1.1) node[above] {$b$};
\draw  (-1.7,-1.1) to [in=240, out=10] (1.7,1.1);
\filldraw (-2,0) circle [radius=1pt];
\draw (-2,0.2) node[left] {$x$};
\filldraw (2,0) circle [radius=1pt];
\draw (1.95,0) node [right]{$z$};
\filldraw (-.8,-1.83) circle [radius=1pt];
\draw (-.6,-1.83) node [below left]{$y$};
\filldraw (.8,1.83) circle [radius=1pt];
\draw (1,1.7) node [above]{$t$};
\filldraw (-1.96,-.4) circle [radius=1pt];
\draw (-1.9, -.4) node [left]{$a_n$};
\filldraw (1.96,.4) circle [radius=1pt];
\draw (2.3,.4) node [above]{$b_n$};
\draw (0,-.57) node[ rotate=35]{$>$};
\end{tikzpicture}
\end{center}
\end{minipage}
This implies that
\bqn
\mu(\ioo{y}{z}\times\ioo{t}{x})\leq\mu(\ioo{a_n}{b_n}\times\ioo{b_n}{a_n})=0\,,
\eqn
that is $\mu(\ioo{a}{b}\times\ioo{b}{a})=0$, by regularity of $\mu$.
\end{proof}

\begin{lem}\label{lem:ss-surj}  Let $\mu$ be a geodesic current. 
Let $(x_1,x_2,x_3,x_4)\in(\deH^2)^4$ be positively oriented, and let us assume
that $(x_1,x_3)$ and $(x_2,x_4)$
are $\mu$-somewhat short.  Then $(x_1,x_2)$, $(x_2,x_3)$, $(x_3,x_4)$ and $(x_4,x_1)$ are 
all $\mu$-somewhat short.  
\end{lem}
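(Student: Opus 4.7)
The plan is to show $(x_1,x_2)$ is $\mu$-somewhat short by decomposing the relevant product set into two pieces, each dominated by one of the hypothesized null sets. The remaining three pairs follow by the cyclic symmetry of the hypothesis (note that rotating the tuple $(x_1,x_2,x_3,x_4) \mapsto (x_2,x_3,x_4,x_1)$ sends the hypothesized short geodesics $\{(x_1,x_3),(x_2,x_4)\}$ to $\{(x_2,x_4),(x_3,x_1)\}$, which is the same pair up to flip invariance).

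To show $\mu(\ioo{x_1}{x_2}\times\ioo{x_2}{x_1}) = 0$, I would split the "long" arc as $\ioo{x_2}{x_1} = \ioc{x_2}{x_3} \cup \ioo{x_3}{x_1}$, so that
\[
\ioo{x_1}{x_2}\times\ioo{x_2}{x_1} \;=\; \bigl(\ioo{x_1}{x_2}\times\ioo{x_3}{x_1}\bigr)\;\cup\;\bigl(\ioo{x_1}{x_2}\times\ioc{x_2}{x_3}\bigr).
\]
For the first piece, the positive orientation gives $\ioo{x_1}{x_2}\subseteq\ioo{x_1}{x_3}$, so
\[
\mu\bigl(\ioo{x_1}{x_2}\times\ioo{x_3}{x_1}\bigr)\leq \mu\bigl(\ioo{x_1}{x_3}\times\ioo{x_3}{x_1}\bigr)=0,
\]
since $(x_1,x_3)$ is $\mu$-somewhat short. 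For the second piece, I would apply flip invariance of $\mu$ to rewrite it as $\mu(\ioc{x_2}{x_3}\times\ioo{x_1}{x_2})$, and use the inclusions $\ioc{x_2}{x_3}\subseteq\ioo{x_2}{x_4}$ and $\ioo{x_1}{x_2}\subseteq\ioo{x_4}{x_2}$ (both consequences of the cyclic order) to bound it by $\mu(\ioo{x_2}{x_4}\times\ioo{x_4}{x_2})=0$, which vanishes because $(x_2,x_4)$ is $\mu$-somewhat short.

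The only delicate point is handling the boundary point $\{x_3\}$ (and similarly $\{x_4\}$ for the other pairs) without invoking any extra regularity of $\mu$; the trick above of grouping $\{x_3\}$ with $\ioo{x_2}{x_3}$ into the half-open interval $\ioc{x_2}{x_3}$ and then applying flip invariance circumvents the need to argue separately that singletons are negligible. Aside from this bookkeeping, the proof is a purely combinatorial manipulation of the cyclic order on $\partial\H^2$, with no analytic input beyond $\sigma$-additivity and flip invariance.
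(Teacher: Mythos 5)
Your proof is correct and follows essentially the same route as the paper: decompose $\ioo{x_2}{x_1}$ into two arcs split near $x_3$, then bound each factor of the product by one of the two hypothesized null sets, using $\ioo{x_1}{x_2}\subset\ioo{x_1}{x_3}\cap\ioo{x_4}{x_2}$ and flip invariance. The only cosmetic difference is that the paper writes $\ioo{x_2}{x_1}=\ioo{x_2}{x_4}\cup\ioo{x_3}{x_1}$, an overlapping open cover that absorbs the endpoints $x_3,x_4$ automatically, so the half-open interval bookkeeping you introduce to track $\{x_3\}$ is unnecessary there; both versions are equally valid.
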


\begin{proof}  We show the assertion for $(x_1,x_2)$, the others are analogous.
There are the following relations between intervals:

\noindent
\begin{minipage}{.5\textwidth}
\bqn
\ba
\ioo{x_1}{x_2}&=\ioo{x_1}{x_3}\cap\ioo{x_4}{x_2}\\
\ioo{x_2}{x_1}&=\ioo{x_2}{x_4}\cup\ioo{x_3}{x_1}
\ea
\eqn
\end{minipage}
\begin{minipage}{.5\textwidth}
\begin{center}
\begin{tikzpicture}[scale=.6]
\draw (0,0) circle [radius=2];
\draw[blue] (-1.732,-1) arc[start angle=136.5, end angle=103.3, radius=7];
\draw[blue] (1.732,-1) arc[start angle=43.5, end angle=76.7, radius=7];
\draw[green] (-1.732,-1) arc[start angle=136.5, end angle=43.5, radius=2.4];
\filldraw (-1.732,-1) circle [radius=1pt] node[below left] {$x_1$};
\filldraw (1.732,-1) circle [radius=1pt] node[below right] {$x_2$};
\filldraw (1.732,1) circle [radius=1pt] node[above right] {$x_3$};
\filldraw (-1.732,1) circle [radius=1pt] node[above left] {$x_4$};
\end{tikzpicture}
\end{center}
\end{minipage}
so that
\bqn
\ioo{x_1}{x_2}\times\ioo{x_2}{x_1}
\subset(\ioo{x_1}{x_3}\times\ioo{x_3}{x_1})\cup (\ioo{x_4}{x_2}\times\ioo{x_2}{x_4})\,,
\eqn
which implies the claim.
\end{proof}

\begin{lem}\label{lem:2.2p} Let $\mu$ be a geodesic current. 
Let $a,b,c,d,e$ be distinct points on $\partial\H^2$ and assume that
\begin{itemize}
\item $(a,b)\pitchfork (c,d)$.
\item $(a,b)$, $(c,d)$, $(e,a)$ and $(e,b)$ are $\mu$-somewhat short.
\end{itemize}
Then $(e,c)$ and $(e,d)$ are $\mu$-somewhat short.
\end{lem}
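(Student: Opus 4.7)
The plan is to deduce the statement from Lemma~\ref{lem:ss-surj} by choosing, for each possible cyclic position of $e$, a positively oriented $4$-tuple whose two diagonals are already known to be $\mu$-somewhat short.

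First I would use the transversality $(a,b)\tra(c,d)$ to pin down the cyclic order of the four points on $\partial\H^2$ as $(a,c,b,d)$, which carves the circle into the four open arcs $\ioo{a}{c}$, $\ioo{c}{b}$, $\ioo{b}{d}$, $\ioo{d}{a}$, one of which contains $e$. A brief case analysis then exhibits, for each arc, an appropriate $4$-tuple built from $e$ together with three of $\{a,b,c,d\}$. For instance, if $e\in\ioo{a}{c}$ the tuple $(e,c,b,d)$ is positively oriented with diagonals $(e,b)$ and $(c,d)$; if $e\in\ioo{c}{b}$ the tuple $(a,c,e,d)$ is positively oriented with diagonals $(a,e)$ and $(c,d)$; the two remaining positions of $e$ are covered by these same two tuples respectively, obtained by inserting $e$ into the fixed cyclic order and restricting to a subset of four points.

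In every case the two diagonals land in $\{(e,a),(e,b),(c,d)\}$, all of which are $\mu$-somewhat short by hypothesis, so Lemma~\ref{lem:ss-surj} forces every side of the quadruple to be $\mu$-somewhat short. The sides through $e$ are $(e,c)$ and $(e,d)$, possibly in the reversed form $(c,e)$ and $(d,e)$, which is harmless since flip invariance of $\mu$ makes the $\mu$-somewhat short property insensitive to the order of the endpoints. I do not anticipate any real obstacle here: the content is pure combinatorial bookkeeping on the cyclic order of five points on the circle, combined with one invocation of Lemma~\ref{lem:ss-surj}. One small byproduct worth mentioning is that the hypothesis that $(a,b)$ itself be $\mu$-somewhat short is actually unused in this reduction — only $(c,d)$, $(e,a)$ and $(e,b)$ enter — so the statement admits a slight strengthening if that is ever convenient downstream.
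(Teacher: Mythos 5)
Your proof is correct and is essentially the argument the paper uses: both reduce to Lemma~\ref{lem:ss-surj} applied to a positively oriented quadruple whose two diagonals land among $(e,a)$, $(e,b)$ and $(c,d)$, with the two cases distinguished by the side of the geodesic $(c,d)$ on which $e$ lies (your four arcs collapse to exactly these two quadruples). Your side remark that the hypothesis that $(a,b)$ be $\mu$-somewhat short is unused is also accurate; the paper's proof does not invoke it either, only the transversality $(a,b)\pitchfork(c,d)$.
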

\begin{proof}  
Consider the ideal triangle with vertices $a,b$ and $e$.  

\medskip
\begin{minipage}{.5\textwidth}
\begin{center}
\begin{tikzpicture}
\draw (0,0) circle [radius=1.5cm];
\filldraw (0,1.5) coordinate(A) circle [radius=1pt] node[above] {$a$};
\filldraw (1.3,.75) coordinate(C) circle [radius=1pt] node[right] {$c$};
\filldraw (-1.3,.75) coordinate(D) circle [radius=1pt] node[left] {$d$};
\filldraw (-1.3,-.75) coordinate(B) circle [radius=1pt] node[below left] {$b$};
\draw (B) arc[start angle=300, end angle=360, radius=2.6];
\draw[blue] (D) arc[start angle=240, end angle=300, radius=2.6];
\filldraw (1.3,-.75) coordinate(E) circle [radius=1pt] node[right] {$e$};
\draw (B) arc[start angle=120, end angle=60, radius=2.6];
\draw[blue] (E) arc[start angle=240, end angle=180, radius=2.6];
\draw[green] (E) -- (D);
\draw[green] (E) arc[start angle=200, end angle=160, radius=2.2];
\filldraw (1.3,.75) coordinate(C) circle [radius=1pt] node[right] {$c$};
\filldraw (-1.3,.75) coordinate(D) circle [radius=1pt] node[left] {$d$};
\filldraw (1.3,-.75) coordinate(E) circle [radius=1pt] node[right] {$e$};
\end{tikzpicture}
\end{center}
\end{minipage}
\begin{minipage}{.5\textwidth}
\begin{center}
\begin{tikzpicture}
\draw (0,0) circle [radius=1.5cm];
\filldraw (0,1.5) coordinate(A) circle [radius=1pt] node[above] {$a$};
\filldraw (1.3,.75) coordinate(C) circle [radius=1pt] node[right] {$c$};
\filldraw (-1.3,.75) coordinate(D) circle [radius=1pt] node[left] {$d$};
\filldraw (-1.3,-.75) coordinate(B) circle [radius=1pt] node[below left] {$b$};
\draw (B) arc[start angle=300, end angle=360, radius=2.6];
\draw[blue] (D) arc[start angle=240, end angle=300, radius=2.6];
\filldraw (E) (.75,1.3) coordinate(E) circle [radius=1pt] node[above right] {$e$};
\draw[blue] (B) arc[start angle=293.8, end angle=336, radius=4];
\draw[green] (D) arc[start angle=240, end angle=330, radius=1.5];
\draw[green] (E) arc[start angle=145, end angle=300, radius=.39];
\filldraw (1.3,.75) coordinate(C) circle [radius=1pt] node[right] {$c$};
\filldraw (-1.3,.75) coordinate(D) circle [radius=1pt] node[left] {$d$};
\filldraw (E) (.75,1.3) coordinate(E) circle [radius=1pt] node[above right] {$e$};
\end{tikzpicture}
\end{center}
\end{minipage}

\medskip
We can assume without loss of generality that  $(a,d,b,c)$ is positively oriented. If the triple $(d,e,c)$ is positively oriented, by Pasch's axiom of geometry
the geodesic $(c,d)$ must cross the geodesic $(e,a)$. 
Then Lemma~\ref{lem:ss-surj}
applied to the four points $a,d,e,c$ implies that $(e,c)$ and $(e,d)$ are $\mu$-somewhat short.

Likewise, if the triple $(d,c,e)$ is positively oriented, the geodesic $(c,d)$ must cross
the geodesic $(e,b)$.   Lemma~\ref{lem:ss-surj} applied now to the four points $b,c,e,d$ 
implies that $(e,c)$ and $(e,d)$ are $\mu$-somewhat short.

\end{proof}

\begin{lem}\label{lem:2.2} Let $\mu$ be a geodesic current.  Let $(g_k)$ be a sequence of $\mu$-somewhat short
geodesics and assume that $g_k\pitchfork g_{k+1}$ for $0\leq k\leq r$.
Let $(x_k,y_k)$ be the endpoints of the geodesics $g_k$ 
chosen in such a way that the quadruple $(x_k,x_{k+1},y_k,y_{k+1})$ has the positive orientation 
for all $0\leq k\leq r$.
Then 
\begin{enumerate}
\item $(x_0,x_k)$ and $(x_0,y_k)$ are $\mu$-somewhat short for all the indices $0\leq k\leq r+1$ 
for which they are defined.
\item Likewise, $(y_0,x_k)$ and $(y_0,y_k)$ are $\mu$-somewhat short for all the indices $0\leq k\leq r+1$ 
for which they are defined.
\end{enumerate}
\end{lem}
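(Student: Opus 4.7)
The plan is to prove part~(1) by induction on $k$; part~(2) follows from the symmetric argument with $y_0$ in place of $x_0$, and the case $k=0$ is trivial since $(x_0, y_0) = g_0$ is $\mu$-somewhat short by hypothesis.

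For the base case $k = 1$, the quadruple $(x_0, x_1, y_0, y_1)$ is positively oriented by assumption, its four points are distinct (because $g_0 \pitchfork g_1$ forces $\{x_0, y_0\} \cap \{x_1, y_1\} = \emptyset$), and its two ``diagonals'' $(x_0, y_0) = g_0$ and $(x_1, y_1) = g_1$ are $\mu$-somewhat short. Lemma~\ref{lem:ss-surj} applied to this quadruple then yields that all four ``sides''---namely $(x_0, x_1)$, $(x_1, y_0)$, $(y_0, y_1)$ and $(y_1, x_0)$---are $\mu$-somewhat short, which simultaneously establishes both parts at $k = 1$.

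For the inductive step, I would assume $(x_0, x_k)$ and $(x_0, y_k)$ are $\mu$-somewhat short and apply Lemma~\ref{lem:2.2p} with $a = x_k$, $b = y_k$, $c = x_{k+1}$, $d = y_{k+1}$ and $e = x_0$. The transversality hypothesis $(a,b) \pitchfork (c,d)$ is precisely $g_k \pitchfork g_{k+1}$; among the four required somewhat-short geodesics, $(a,b) = g_k$ and $(c,d) = g_{k+1}$ hold by the standing hypothesis on the sequence, while $(e,a) = (x_0, x_k)$ and $(e,b) = (x_0, y_k)$ hold by the inductive hypothesis. Lemma~\ref{lem:2.2p} then delivers that $(x_0, x_{k+1})$ and $(x_0, y_{k+1})$ are $\mu$-somewhat short, completing the induction.

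The only subtlety---and really the only place where anything could go wrong---is the distinctness requirement on the five points feeding into Lemma~\ref{lem:2.2p}: while $a, b, c, d$ are automatically distinct (as in the base case), the point $e = x_0$ could a priori coincide with $x_{k+1}$ or $y_{k+1}$. This is precisely what the clause ``for which they are defined'' in the statement accommodates, so such indices are harmlessly skipped and the induction proceeds beyond them unchanged.
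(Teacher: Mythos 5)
Your outline is the same as the paper's — induction on $k$, base case via Lemma~\ref{lem:ss-surj}, inductive step via Lemma~\ref{lem:2.2p} with $e=x_0$ — but the treatment of the degenerate cases has a real gap. You correctly notice that $e=x_0$ may coincide with $x_{k+1}$ or $y_{k+1}$, so that one of the two target geodesics is undefined, and you wave this away by saying such indices are ``harmlessly skipped.'' What you do not address is the \emph{next} step of the induction once a skip has occurred. If at some stage $x_0=x_k$ (or $x_0=y_k$) — which can happen for $k\ge 2$, since the geodesics $g_k$ may loop back toward $g_0$ — then the inductive hypothesis only hands you \emph{one} of $(x_0,x_k),(x_0,y_k)$, and your application of Lemma~\ref{lem:2.2p} is blocked because $e$ coincides with $a$ or $b$. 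The claim that ``the induction proceeds beyond them unchanged'' is precisely the point that needs an argument, and as written the proposal doesn't supply one.

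The paper fills this in by formulating the inductive hypothesis as a trichotomy: (i) $x_0=x_k$ and $(x_0,y_k)$ is $\mu$-somewhat short; (ii) $x_0=y_k$ and $(x_0,x_k)$ is $\mu$-somewhat short; (iii) both are defined and $\mu$-somewhat short. In cases (i) and (ii), one observes that $(x_0,x_{k+1})$ and $(x_0,y_{k+1})$ are then just $(x_k,x_{k+1}),(x_k,y_{k+1})$ (or $(y_k,\cdot)$), which are sides of the quadrilateral with diagonals $g_k,g_{k+1}$ and hence $\mu$-somewhat short directly by Lemma~\ref{lem:ss-surj} — no appeal to the two somewhat-short geodesics through $e$ is needed. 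In case (iii), if $x_0\in\{x_{k+1},y_{k+1}\}$ the surviving conclusion is just $g_{k+1}$ itself and holds trivially; otherwise all five points are distinct and Lemma~\ref{lem:2.2p} applies as you describe. To repair your write-up you should state the inductive hypothesis as this trichotomy and handle cases (i) and (ii) explicitly; the ``skip'' observation alone is not enough to carry the induction past a coincidence.
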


\begin{proof} (1) We prove the assertion by induction on $\ell$.

Let us assume that $\ell=0$.  Then, since $(x_0,y_0)\pitchfork(x_1,y_1)$, and $(x_0,y_0)$ and
$(x_1,y_1)$ are $\mu$-somewhat short, it follows from Lemma~\ref{lem:ss-surj} that $(x_0,x_1)$ and $(x_0,y_1)$
are $\mu$-somewhat short.

For the induction step we assume that $(x_0,x_k)$ and $(x_0,y_k)$ are $\mu$-somewhat short for all the $0\leq k\leq\ell\leq r$
and that either one of the following three cases holds:
\begin{enumerate}
\item[(i)] $x_0=x_\ell$ and $(x_0,y_\ell)$ is defined and $\mu$-somewhat short;
\item[(ii)] $x_0=y_\ell$ and $(x_0,x_\ell)$ is defined and $\mu$-somewhat short;
\item[(iii)] $(x_0,x_\ell)$ and $(x_0,y_\ell)$ are both  defined and $\mu$-somewhat short.
\end{enumerate}

Notice that (i) and (ii) cannot occur at the same time, as the geodesic $g_\ell$ has $x_\ell$ and $y_\ell$ as endpoints, 
which therefore must be different from each other.  Notice moreover that if $g_\ell\pitchfork g_{\ell+1}$, the points
$x_\ell,x_{\ell+1},y_\ell$ and $y_{\ell+1}$ must all be distinct.

\medskip
(i) We apply Lemma~\ref{lem:2.2p} to $(a,b,c,d)=(x_\ell, y_\ell, x_{\ell+1},y_{\ell+1})$ and $e=x_0$, to obtain that 
$(x_0,x_{\ell+1})$ and $(x_0,y_{\ell+1})$ are $\mu$-somewhat short.
Moreover now $(x_0,x_{\ell+1})$ and $(x_0,y_{\ell+1})$ are defined.

\medskip
(ii) The same argument as in (i) shows the assertion.

\medskip
(iii) Assume now that $(x_0,x_{\ell})$ and $(x_0,y_{\ell})$ are both defined and $\mu$-somewhat short.
We remarked already that the points  $x_\ell,x_{\ell+1},y_\ell$ and $y_{\ell+1}$ must all be distinct
and we claim that we may assume that the points $x_0,x_\ell,x_{\ell+1},y_\ell$ and $y_{\ell+1}$
are also all distinct.  In fact, by hypothesis $x_0\neq x_\ell$ and $x_0\neq y_\ell$.
If $x_{\ell+1}=x_0$ then $(x_0,y_{\ell+1})=(x_{\ell+1},y_{\ell+1})$ is $\mu$-somewhat short and $(x_0,x_{\ell+1})$ is not defined.
Analogously if $y_{\ell+1}=x_0$ then $(x_0,x_{\ell+1})=(y_{\ell+1},x_{\ell+1})$ is $\mu$-somewhat short and $(x_0,y_{\ell+1})$ is not defined.

Hence if finally the points $x_0,x_\ell,x_{\ell+1},y_\ell$ and $y_{\ell+1}$ are all distinct 
we can apply Lemma~\ref{lem:2.2p} with $(a,c,b,d)=(x_\ell,x_{\ell+1},y_\ell,y_{\ell+1})$
and $e=x_0$ to conclude that $(x_0,x_{\ell+1})$ and $(x_0,y_{\ell+1})$ are $\mu$-somewhat short.

The proof of (2) is completely analogous.
\end{proof}

\begin{prop}\label{prop:L(rho(gamma))=0}  Let $\gamma\in\pi_1(\Sigma)$ be a hyperbolic element and let us assume that 
it is represented by a closed loop $c$ in $\Sigma$ that is the concatenation of segments
belonging to $\mu$-somewhat short geodesics.  Then $(\gamma_-,\gamma_+)$ is $\mu$-somewhat short.
\end{prop}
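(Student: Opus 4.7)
The plan is to lift $c$ to $\H^2$, iterate Lemma~\ref{lem:2.2} along the lifted path, and then pass to limits using the closedness of $\mu$-somewhat short geodesics (Lemma~\ref{lem:convergence of ss}) together with the $\Gamma$-invariance of $\mu$. Concretely, I would lift $c$ to a bi-infinite $\gamma$-invariant piecewise-geodesic curve $\wt c\subset\H^2$ whose consecutive segments extend to $\mu$-somewhat short geodesic lines $(g_k)_{k\in\Z}$. After deleting backtracks we may assume $g_k\neq g_{k+1}$ for every $k$; since two distinct geodesic lines of $\H^2$ meeting in a point must cross transversally, the shared vertex of the segments on $g_k$ and $g_{k+1}$ forces $g_k\pitchfork g_{k+1}$. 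Let $N$ be a period, so that $g_{k+N}=\gamma\cdot g_k$ for all $k\in\Z$.

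Next I would label the ideal endpoints $(x_k,y_k)$ of each $g_k$ inductively in $k$ so that the quadruple $(x_k,x_{k+1},y_k,y_{k+1})$ is positively oriented. Such a labeling exists (and is unique given the initial choice) because the endpoints of $g_k$ and $g_{k+1}$ interleave on $\partial\H^2$, and since $\gamma$ preserves the orientation of $\partial\H^2$ it is compatible with $\gamma$-translation, namely $x_{k+N}=\gamma x_k$. Lemma~\ref{lem:2.2} applied to $g_0,g_1,\dots,g_{jN}$ then gives that $(x_0,x_{jN})=(x_0,\gamma^j x_0)$ is $\mu$-somewhat short for every $j\geq 1$. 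If $g_0$ is the axis of $\gamma$ the conclusion is immediate; otherwise at least one of $x_0,y_0$ lies outside $\{\gamma_-,\gamma_+\}$, and we may assume this is $x_0$ (swapping the roles of $x_0$ and $y_0$ via part (2) of Lemma~\ref{lem:2.2} if necessary).

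Two successive limit passages conclude the argument. Since $x_0\neq\gamma_-$ and $\gamma$ is hyperbolic, $\gamma^j x_0\to\gamma_+$ as $j\to\infty$, so Lemma~\ref{lem:convergence of ss} yields that $(x_0,\gamma_+)$ is $\mu$-somewhat short. By $\Gamma$-invariance of $\mu$, the geodesic $(\gamma^{-j'}x_0,\gamma_+)=\gamma^{-j'}\cdot(x_0,\gamma_+)$ is $\mu$-somewhat short for every $j'\geq 1$; since $x_0\neq\gamma_+$ we have $\gamma^{-j'}x_0\to\gamma_-$, and a second application of Lemma~\ref{lem:convergence of ss} gives that $(\gamma_-,\gamma_+)$ is $\mu$-somewhat short, as desired. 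The main obstacle I anticipate is the bookkeeping in the middle paragraph: choosing the labeling $(x_k,y_k)$ consistently along the full bi-infinite sequence so that the hypothesis of Lemma~\ref{lem:2.2} holds over arbitrarily long ranges and is simultaneously $\gamma$-equivariant; once this is in place, the conclusion reduces to two straightforward limits.
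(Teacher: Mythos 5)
Your proof is correct and follows the same core strategy as the paper: lift $c$ to a $\gamma$-periodic piecewise-geodesic, apply Lemma~\ref{lem:2.2} to deduce that $(x_0,\gamma^j x_0)$ is $\mu$-somewhat short for all $j\geq 1$, and then pass to the limit. The only divergence is in the final step: the paper runs a direct exhaustion of $\ioo{\gamma_-}{\gamma_+}\times\ioo{\gamma_+}{\gamma_-}$ by sub-rectangles $\ioo{x}{y}\times\ioo{\gamma_+}{\gamma_-}$, whereas you invoke the closedness of the set of $\mu$-somewhat short geodesics (Lemma~\ref{lem:convergence of ss}) twice together with $\Gamma$-invariance, which is slightly slicker. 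One small remark on your bookkeeping worry: it is not actually necessary to verify that the chosen labeling satisfies $x_{k+N}=\gamma x_k$ on the nose, since $\gamma^j x_0$ is always one of the two endpoints $\{x_{jN},y_{jN}\}$ of $g_{jN}=\gamma^j g_0$, and Lemma~\ref{lem:2.2}(1) already gives that both $(x_0,x_{jN})$ and $(x_0,y_{jN})$ are $\mu$-somewhat short.
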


\begin{proof}
Let $\widetilde c$ be the lift in $\widetilde\Sigma$ of $c$ preserved by $\gamma$ and let $\dots,I_{-1}$, $I_0$, $I_1$, $I_2,\dots$, $I_n=\gamma(I_0)$, $I_{n+1}=\gamma(I_1),\dots$
be the periodic sequence of oriented geodesic segments in $\widetilde c$ such that $\gamma(I_j)=I_{n+j}$ for all $j$.
We define $(x_j, y_j)$ to be the supporting geodesic of the oriented segment $I_j$
in such a way that $(x_j,x_{j+1},y_j,y_{j+1})$ is positively oriented.
By construction $(x_j,y_j)\pitchfork(x_{j+1},y_{j+1})$ and by hypothesis all $(x_j,y_j)$ are $\mu$-somewhat short.  

If any of the $(x_j,y_j)$ or $(y_j,x_j)$ equals $(\gamma_-,\gamma_+)$ we are done.
We hence assume now that $\{\gamma_-,\gamma_+\}\neq\{x_j,y_j\}$ for all $j$.
 In particular we have that either
$x_0\notin\{\gamma_-,\gamma_+\}$ or $y_0\notin\{\gamma_-,\gamma_+\}$.  

We assume without loss of generality that $x_0\notin\{\gamma_-,\gamma_+\}$.  
Then by  Lemma~\ref{lem:2.2}\,(1) we have that $(x_0,x_i)$ is $\mu$-somewhat short whenever is defined, 
which in particular is the case for all $(x_0,\gamma^k x_0)$ for $k\geq1$. 
Hence $(\gamma^\ell x_0,\gamma^{k+\ell}x_0)$ is $\mu$-somewhat short for all $\ell$ and all $k\geq1$.

Either $(\gamma_-,x_0,\gamma_+)$ or $(\gamma_+,x_0,\gamma_-)$ is maximal.
We will only consider the first case, since the second is entirely analogous. 
Pick  $x,y$ so that $(\gamma_-,x,y,\gamma_+)$ is maximal and let $\ell$ and $k\geq1$ be integers
such that the 6-tuple $(\gamma_-,\gamma^\ell x_0,x,y,\gamma^{\ell+k}x_0,\gamma_+)$ is maximal.
Since $(\gamma^\ell x_0,\gamma^{\ell+k} x_0)$ is $\mu$-somewhat short, then
$\mu(\ioo{\gamma_-}{\gamma_+}\times\ioo{x}{y})=0$
Since $x$ and $y$ were arbitrary, we deduce that $(\gamma_-,\gamma_+)$ is $\mu$-somewhat short.
\end{proof}

\subsection{A decomposition theorem for geodesic currents}\label{sec:dec}
Let $\mu\in\calC(\Sigma)$ be a geodesic current, where $\Sigma=\G\backslash\H^2$ is a finite area hyperbolic surface.
We consider
$$\calG_\mu(0)=\{c\subset \Sigma \text{ closed geodesic }|\;i(\mu,c)=0\}.$$
On $\calG_\mu(0)$ we put the graph structure $c_1\equiv c_2$ if $i(c_1,c_2)>0$. 
Notice that there is a loop at $c\in \calG_\mu(0)$ if and only if $c$ is self-intersecting. 
Let $\mathfrak C\subset\calG_\mu(0)$ be a connected component  which is not reduced to a single simple closed curve. 
Let $c_1,c_2,\ldots$ be an enumeration of the vertices of $\mathfrak C$  
such that $c_i\equiv c_{i+1}$ for every $i\geq 1$, and choose $x\in c_1$ a basepoint.

Let $X_n=\bigcup_{i=1}^n c_i$. Then $X_n$ is a connected graph. Let $\G_n$ be the image of the morphism
$$\pi_1(X_n,x)\to \pi_1(\Sigma,x).$$
Then by Proposition \ref{prop:L(rho(gamma))=0} every hyperbolic $\g\in\G_n$ satisfies $i(\mu,\delta_\g)=0$.  

Let $\Sigma_n$ be the totally geodesic subsurface obtained by taking a regular neighbourhood of $X_n$, 
adding to it the connected components of the complement that are either simply connected or containing a single cusp, and straightening the boundary components. 
In fact $\Sigma_n$ is the projection of the closet convex hull of the limit set of $\G_n$ and $\G_n=\pi_1(\Sigma_n,x)$. 
Since $\Sigma$ and $\Sigma_n$ are of finite topological type, the sequence $\G_n\subseteq \G_{n+1}$ stabilizes, and we set $\Sigma^\mathfrak C:=\Sigma_n$ for $n$ large enough.

Clearly each boundary component $c\subset \de\Sigma^\mathfrak C$ has the property that 
if a closed geodesic $c'\subset \Sigma$ is such that $i(c,c')>0$ then $i(\mu,c')>0.$ Thus we have a decomposition
$$\Sigma =\bigcup_{v\in \mathcal{V}_\mu}\Sigma_v=\bigcup_{v\in \mathcal{V}_{\mu,S}}\Sigma_v\cup\bigcup_{v\in \mathcal{V}_{\mu,P}}\Sigma_v$$  
such that if $v\in \mathcal{V}_{\mu,S}$, then every closed curve $c\subset\Sigma_v$ is $ \mu$-somewhat short, 
and if $v\in \mathcal{V}_{\mu,P}$ then every $c\subset \Sigma_v$ not boundary parallel has $i(\mu,c)>0$, 
namely it has positive intersection (therefore the choice of the indices, S for $\mu$-somewhat short, P for positive intersection). 
Let $\mathcal{E}$ be the set of boundary components of all the subsurfaces. 
Observe that $\mathcal{E}$ coincides with the set of \emph{special} geodesics defined in the introduction, 
namely those geodesics $c$ that are $\mu$-somewhat short and have the additional property that $i(\mu,c')>0$ if $i(c, c')>0$.

Now let $g\subset \H^2$ be a geodesic such that $\p_\Sigma(g)$ intersects $\mathring \Sigma_v$, where $v\in\mathcal{V}_{\mu,S}$. 
We choose a connected component $\wt \Sigma_v$ of $\p_\Sigma^{-1}(\Sigma_v)$ such that $g$ intersects $\mathring{\wt\Sigma}_v$. 
There are two cases: either $g$ intersects a boundary component of $\wt\Sigma_v$ 
and hence $g$ does not belong to the support of $\mu$ or $g\subseteq{\wt \Sigma}_v$. 
In this case $g$ cannot be a boundary component of $\wt\Sigma_v$ and hence must intersect transversely some axis of an element in $\pi_1(\Sigma_v,x)$. 
This implies that $g$ doesn't belong to the support of $\mu$. 
Thus, if $g\subset \H^2$ is in the support of $\mu$, then there is $v\in \mathcal{V}_{\mu,P}$ with $\p_\Sigma(g)\subseteq \Sigma_v$.

  For each $v\in \mathcal{V}_{\mu,P}$, define 
  $$\mathcal G_v=\{g\subset \H^2|\; \p_\Sigma(g)\subset\mathring \Sigma_v\}.$$
Then $\mathcal G_v\subset(\deH^2)^{(2)}$ is a $\G$-invariant Borel subset and 
$$\supp(\mu)\subset\bigcup_{v\in V_P}\mathcal G_v\cup\bigcup_{c\in E}\{g|\;\p_\Sigma(g)=c\}.$$
Let $\chi_{v}$ be the characteristic function of $\mathcal G_v$ and set $\mu_v:=\mu\cdot\chi_v$. Then $\mu_v\in\mathcal C(\Sigma)$ and we have proven:

\begin{thm}\label{thm:decBP}
There are numbers $\lambda_c\in\R$ with
$$\mu=\sum_{v\in \mathcal{V}_\mu} \mu_v+\sum_{c\in \mathcal{E}_\mu}\lambda_c\delta_{c}\,,$$
where $\mu_v$ is supported in $\overline{\mathcal G_v}$ and $\delta_{c}$ is the geodesic current associated to the closed geodesic $c$. 
\end{thm}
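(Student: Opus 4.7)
My plan is to reduce Theorem \ref{thm:decBP} to the preparatory work done earlier in Section \ref{sec:dec}. That discussion already establishes both the decomposition $\Sigma = \bigcup_{v \in \mathcal{V}_\mu} \Sigma_v$ along the special geodesics $\mathcal{E}_\mu$ (with $\mathcal{V}_\mu$ further split into the $\mu$-somewhat short type $\mathcal{V}_{\mu,S}$ and the positive intersection type $\mathcal{V}_{\mu,P}$) and the crucial support containment
$$\supp(\mu) \subseteq \bigcup_{v \in \mathcal{V}_{\mu,P}} \mathcal{G}_v \,\cup\, \bigcup_{c \in \mathcal{E}_\mu} N_c,$$
where $N_c := \{g \in (\partial \H^2)^{(2)} : \p_\Sigma(g) = c\}$. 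I would first observe that the subsets $\mathcal{G}_v$ and $N_c$ are Borel, $\Gamma$-invariant, flip-invariant, and pairwise disjoint, so $\mu$ splits unambiguously as the sum of its restrictions to each piece.

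For each $v \in \mathcal{V}_{\mu,P}$ I would define $\mu_v := \mu \cdot \chi_{\mathcal{G}_v}$ (and $\mu_v := 0$ for $v \in \mathcal{V}_{\mu,S}$); since $\mathcal{G}_v$ is $\Gamma$- and flip-invariant, each $\mu_v$ is a positive Radon measure inheriting both invariance properties, hence is a geodesic current with $\supp(\mu_v) \subseteq \overline{\mathcal{G}_v}$. To identify the contribution from each $c \in \mathcal{E}_\mu$, I would use that $c$ is a simple closed geodesic corresponding to a primitive hyperbolic $\gamma \in \Gamma$: then $N_c$ is a discrete $\Gamma$-invariant subset of $(\partial \H^2)^{(2)}$, namely the union of the $\Gamma$-orbit of $(\gamma_-,\gamma_+)$ and its flip, with stabilizer $\langle \gamma \rangle$. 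Consequently $\mu \cdot \chi_{N_c}$ is purely atomic, and $\Gamma$-invariance forces the atomic masses to be constant on this orbit; setting $\lambda_c \geq 0$ to this common value yields $\mu \cdot \chi_{N_c} = \lambda_c \delta_c$, which completes the decomposition.

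The only non-routine ingredient in this argument is the support containment, but this has already been carried out in the text just before the theorem: its main inputs are Proposition \ref{prop:L(rho(gamma))=0} (guaranteeing that every boundary component of $\Sigma^{\mathfrak C}$ is $\mu$-somewhat short) together with Lemma \ref{lem:1.22} (translating this into vanishing of $\mu$ on the relevant products of intervals, so that geodesics crossing a boundary component are excluded from the support). Granted this containment, the proof of Theorem \ref{thm:decBP} becomes routine measure-theoretic bookkeeping for $\Gamma$-invariant Borel measures on disjoint invariant Borel sets, and I do not expect any further obstacles.
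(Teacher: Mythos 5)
Your proof is correct and follows the same route as the paper: both rely on the support containment $\supp(\mu)\subseteq\bigcup_{v\in\mathcal{V}_{\mu,P}}\mathcal{G}_v\cup\bigcup_{c\in\mathcal{E}_\mu}N_c$ established in the discussion just before the theorem, then restrict $\mu$ to the pairwise disjoint invariant Borel pieces $\mathcal{G}_v$ and $N_c$. Your argument is in fact slightly more explicit than the paper's, which stops at defining $\mu_v=\mu\cdot\chi_v$ and leaves the identification $\mu\cdot\chi_{N_c}=\lambda_c\delta_c$ (via discreteness of $N_c$ and $\Gamma$-invariance of the atomic masses) implicit; you spell that step out, which is a useful clarification but not a different approach.
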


Theorem \ref{thm:decBP} motivates the following definition:
\begin{defn}\label{defn:basic}
A geodesic current $\mu$ is \emph{basic} if there exists a  subsurface with totally geodesic boundary $\Sigma_\mu\subseteq\Sigma$ such that
\begin{enumerate}
\item $\carrier(\mu)\subseteq \Sigma_\mu$;
\item for every closed geodesic $c\subset\mathring\Sigma_\mu$.  $i(\mu,\delta_c)>0$; 
\item for all  $\g\in\G$ representing a boundary component of $\Sigma_\mu$, $\mu(\{(\gamma_-,\gamma_+)\})=0$.
\end{enumerate}
\end{defn}
\begin{remark}
Observe that if the geodesic current $\mu$ is basic then for all  $\g\in\G$ representing a boundary component of $\Sigma_\mu$, $(\gamma_-,\gamma_+)$ is $\mu$-somewhat short, since no geodesic in the support of $\mu$ can intersect the lift $(\gamma_-,\gamma_+)$ of a boundary component.
\end{remark}
Observe that if $\mu$ is a basic current, then the \emph{supporting surface} $\Sigma_\mu$ of $\mu$ is the smallest subsurface of $\Sigma$ with totally geodesic boundary that contains $\carrier(\mu)$. 

We fix a component $\wt\Sigma_\mu$ of $\p_\Sigma^{-1}(\Sigma_\mu)$ and denote by $\Gamma_\mu$ the  stabilizer in $\G$ of $\wt\Sigma_\mu$. We say that an element $\gamma\in\G_\mu$ is \emph{peripheral} if either $\gamma$ is parabolic or it corresponds to a boundary component of $\Sigma_\mu$.
As a refinement of Definition \ref{defn:systole}, we set
\begin{defn}\label{defn:subsystole}  The {\em systole} of a basic geodesic current $\mu$ restricted to its supporting subsurface is 
\bqn
\Syst_{\Sigma_{\mu}}(\mu):=\inf\{i(\mu,\delta_\gamma):\,\gamma\in\Gamma_\mu\text{ not peripheral}\}.
\eqn
\end{defn}

\subsection{From somewhat short geodesics to laminations}\label{subsec:1.2}

We say that a geodesic $(a,b)$ is {\em simple} if $\p_\Sigma((a,b))$ is not self-intersecting.  
In this subsection we focus on a basic geodesic current $\mu$, 
and show that, if there exists a $\mu$-somewhat short geodesic $(a,b)$, 
then there exists a lamination $\wt \Lambda$ consisting of $\mu$-somewhat short geodesics. 
This is achieved in two steps: in Proposition \ref{lem:simple_ss_geod} we show that we can assume that $(a,b)$ is simple, 
in Proposition \ref{thm:1.5} we show that the closure of a simple somewhat short geodesic is a lamination consisting of somewhat short geodesics
and with at most two minimal sublaminations.

Let $(a,b)$ be $\mu$-somewhat short and consider $\p_\Sigma\left((a,b)\right)\subset\Sigma$ as a pathwise connected subspace.
Fix $\ast\in\p_\Sigma\left((a,b)\right)$ and observe that every element in $\pi_1(\p_\Sigma\left((a,b)\right),\ast)$
can be represented by a concatenation of geodesic segments whose lifts lie on a $\mu$-somewhat short geodesic.

\begin{prop}\label{lem:simple_ss_geod} Let $\mu$ be a basic geodesic current and let $(a,b)$ be a $\mu$-somewhat short geodesic whose projection is contained in the supporting surface $\Sigma_\mu$. Then:
\be
\item either the image of $\pi_1(\p_\Sigma((a,b)),\ast)\to\pi_1(\Sigma,\ast)$ is $\{e\}$, in which case $(a,b)$ is simple, or
\item its image is $\< \gamma\>$, where $\g$ is peripheral.
In either case neither $a$ nor $b$ are fixed by $\gamma$ and there is $k\in\Z$ such that $(\g^ka,b)$ is $\mu$-somewhat short and simple.
\ee
\end{prop}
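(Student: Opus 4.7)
I would analyze the image $H$ of $\pi_1(p_\Sigma((a,b)),\ast)\to\pi_1(\Sigma,\ast)$, show that it is trivial or generated by a peripheral element of $\pi_1(\Sigma_\mu)$, and then, in the nontrivial case, produce the simple lift $(\gamma^k a,b)$ by exploiting the geometry of the orbit $\{\gamma^n(a,b)\}$.

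First, any loop in $p_\Sigma((a,b))\subset\Sigma_\mu$ is a concatenation of geodesic sub-arcs of $(a,b)$ and its $\Gamma$-translates, all of which lie on $\mu$-somewhat short geodesics. By Proposition~\ref{prop:L(rho(gamma))=0}, every hyperbolic $\eta\in H$ has $(\eta_-,\eta_+)$ $\mu$-somewhat short, so $i(\mu,\delta_\eta)=0$. Basicness of $\mu$ then forces $\eta$ to be peripheral in $\pi_1(\Sigma_\mu)$, and parabolic elements are automatically peripheral. Using the standard fact that in a surface group distinct maximal peripheral subgroups intersect trivially and do not commute, any subgroup of $\pi_1(\Sigma_\mu)$ consisting entirely of peripheral elements is contained in a single such cyclic subgroup; hence $H$ is either trivial — giving case~(1), since any self-intersection of $(a,b)$ would produce a nontrivial element of $\Gamma$ realized by a loop in $p_\Sigma((a,b))$ and hence a nontrivial element of $H$ — or $H=\langle\gamma\rangle$ with $\gamma$ peripheral.

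In the second case, let $\wt X\subset p_\Sigma^{-1}(p_\Sigma((a,b)))$ be the connected component containing $(a,b)$; its $\Gamma$-stabilizer equals $H$, so $\wt X=\bigcup_{n\in\Z}(\gamma^n a,\gamma^n b)$. If $a$ were a fixed point of $\gamma$, then the geodesics $(\gamma^n a,\gamma^n b)=(a,\gamma^n b)$ would meet pairwise only at $a\in\partial\H^2$ and be mutually disjoint inside $\H^2$, contradicting the connectedness of the one-dimensional set $\wt X$; the symmetric argument applies to $b$. Connectedness of $\wt X$ together with this non-fixation then forces the existence of a positive integer $k$ with $\gamma^k(a,b)\pitchfork(a,b)$ in $\H^2$, and the four distinct points $a,b,\gamma^k a,\gamma^k b$ form a positively cyclically ordered quadruple with $(a,b)$ and $(\gamma^k a,\gamma^k b)$ as crossing diagonals. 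Lemma~\ref{lem:ss-surj} then immediately yields that the side $(\gamma^k a,b)$ is $\mu$-somewhat short.

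The hard part is to arrange that this $k$ can be chosen so that $(\gamma^k a,b)$ is moreover simple. My plan is to re-run the first paragraph's analysis on $Y:=p_\Sigma((\gamma^k a,b))$: the image $H'$ of $\pi_1(Y,\ast)\to\pi_1(\Sigma,\ast)$ is again trivial or generated by a peripheral element, and any nontrivial generator must commute with $\gamma$ (both endpoints of $(\gamma^k a,b)$ lie on the boundary at infinity of $\wt X$, which is $\langle\gamma\rangle$-invariant), hence be a power of $\gamma$. A nontrivial generator $\gamma^\ell$ of $H'$ would yield a transverse crossing $\gamma^\ell(\gamma^k a,b)\pitchfork(\gamma^k a,b)$; unwinding this crossing through the $\gamma$-action on $\wt X$ and choosing $k$ minimal with the crossing property $\gamma^k(a,b)\pitchfork(a,b)$ should rule this out, giving $H'=\{e\}$ and hence simplicity by the trivial-image step of the first paragraph. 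If a single minimal choice of $k$ does not suffice, one iterates the construction; termination is guaranteed because the geometric self-intersection number of the projected geodesic with a fixed filling system of closed curves in $\Sigma_\mu$ is finite and strictly decreases at each iteration.
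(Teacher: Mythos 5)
Your first two paragraphs track the paper's argument closely and are sound: via Proposition~\ref{prop:L(rho(gamma))=0} every hyperbolic element in the image $H$ has $\mu$-somewhat short axis, hence is peripheral by basicness; since parabolics are automatically peripheral, $H$ consists entirely of peripheral elements and is therefore trivial or cyclic peripheral, and both the trivial case and the non-fixation of $a,b$ by $\gamma$ follow essentially as you say.

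The third paragraph is where the real difficulty lies, and your plan has a concrete error. Choosing $k$ \emph{minimal} with the crossing property does not yield a simple geodesic. With your normalization $(a,\gamma a,b,\gamma b)$ positively oriented, the minimal crossing exponent is $k=1$, and $(\gamma a,b)$ can still be far from simple. Concretely, take $\gamma(z)=z+1$ parabolic in the upper half-plane and $(a,b)=(0,10.5)$: then $\gamma^n(a,b)\pitchfork(a,b)$ exactly for $1\leq n\leq 10$, and $(\gamma a,b)=(1,10.5)$ is crossed by its own $\gamma^m$-translates for all $m=1,\dots,9$, while $(\gamma^{10}a,b)=(10,10.5)$ is simple. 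The paper instead takes $k$ to be the \emph{maximal} integer with $(a,\gamma^k a,b,\gamma^k b)$ positively oriented, and it is this maximality that powers the contradiction. Your fallback iteration is also unjustified: neither the finiteness of the self-intersection count of the biinfinite projected geodesic nor its claimed strict decrease is established, and the assertion that a nontrivial element of $H'$ must commute with $\gamma$ because the endpoints of $(\gamma^k a,b)$ lie in the boundary of $\wt X$ does not follow, since an element crossing $(\gamma^k a,b)$ need not preserve that boundary set. The paper's actual argument is shorter and direct: assuming $\eta\neq e$ with $\eta(\gamma^k a,b)\pitchfork(\gamma^k a,b)$, two applications of Pasch's theorem to the triangle $T$ with sides $(\gamma^k a,b)$, $(\gamma^k a,\gamma^k b)$, $(a,b)$ and to its translate $\eta T$ force a crossing $\eta\gamma^i(a,b)\pitchfork\gamma^j(a,b)$ for some $i,j\in\{0,k\}$, whence $\eta\in\langle\gamma\rangle$; a short case analysis on the cyclic order of the endpoints then produces a power of $\gamma$ strictly larger than $k$ still satisfying the defining property, contradicting maximality.
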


\begin{proof} It follows from Proposition~\ref{prop:L(rho(gamma))=0} that the image $\p_\Sigma\left((a,b)\right)\subset\Sigma$ does not contain
any non-peripheral element.  Hence is must be either $\{e\}$ or $\<\gamma\>$ with $\gamma$ peripheral.  
In the first case $\p_\Sigma\left((a,b)\right)$ cannot self-intersect since a geodesic monogon is always $\pi_1$-non-trivial.  This proves (1).

In the second case, let $\infty$ be a fixed point of $\gamma$. Since $(a,b)$ is self intersecting, neither $a$ nor $b$ are fixed points of $\gamma$, and,  if $\gamma$ is hyperbolic, $(a,b)$ doesn't intersect the axis of $\gamma$. In particular, up to switching $\gamma$ with $\gamma^{-1}$ we can assume that $(a,\gamma a, b,\g b)$ is positively oriented.

Let $k\geq 1$ be maximal such that $(a,\g^ka,b,\g^k b)$ is positively oriented.
By Lemma \ref{lem:ss-surj}   $(\gamma^k a,b)$ is $\mu$-somewhat short. We will prove by contradiction that $\p_\Sigma\left((\gamma^ka,b)\right)$ is simple.
%
We denote by $\eta\in\Gamma$ an element $\eta\neq e$ such that
\bq\label{eq:self-intersection}
\eta(\gamma^k a,b)\pitchfork(\gamma^k a,b)\,,
\eq
and we denote by $T$ the triangle bounded by the geodesics $(\g^k a,b)$, $(\g^ka,\g^kb)$ and $(a,b)$.
\begin{center}
\begin{tikzpicture}[scale=.7]
\draw (-4.5,0) -- (4.5,0);
\draw[color=blue!70] (-3,0) arc [start angle=180, end angle=0, radius=2];
\draw[green!80!black] (-1,0) arc [start angle=180, end angle=0, radius=2];
\filldraw (-3,0) coordinate (A) circle [radius=1pt];
\draw (-3, -.4) node {$\g^ka$};
\filldraw (-1,0) coordinate (etaA) circle [radius=1pt];
\draw (-1,-.4) node {$\eta \g^ka$};
\filldraw (1,0) coordinate (nkB) circle [radius=1pt] node[below] {$b$};
\filldraw (3,0) coordinate (etankB) circle [radius=1pt] node[below] {$\eta  b$};
\draw[color=blue!70] (A) arc [start angle=180, end angle =95, radius=6];
\draw (1.5,6.1) node[rotate=12] {$(\g^ka,\g^kb)$};
\draw[color=blue!70] (nkB) arc [start angle=0, end angle=85, radius=6];
\draw (-3.5,5.3) node[rotate=-17] {$(a,b)$};
\draw[green!80!black] (etaA) arc [start angle=180, end angle =95, radius=6];
\draw (3.5,5.5) node[rotate=17] {$(\eta\g^k a,\eta\g^k b)$};
\draw[green!80!black] (etankB) arc [start angle=0, end angle=88, radius=6];
\draw (-1.5,6.1) node[rotate=-14] {$(\eta a,\eta b)$};
\draw (-1.5,3) node {$T$};
\draw (1.2,3) node {$\eta T$};

\end{tikzpicture}
\end{center}
Since $(\g^ka,b)\pitchfork \eta(\g^ka,b)$, the geodesic $(\g^ka,b)$ intersects one of the sides of the triangle $\eta T$.
By Pasch's theorem, $(\g^ka,b)$ must intersect also another of the geodesic sides of $\eta T$,
call it $\ell$.  But this means that $\ell$ intersects one of the sides of $T$.  
Thus, again by Pasch's theorem, it must intersect one of the other sides, in particular  
either $\eta (a, b)$ or $\eta (\gamma^ka, \g^kb)$ intersects either $(a,b)$ or $(\g^ka,\g^kb)$.
This implies that $\eta\in\<\gamma\>$, which contradicts the maximality of $k$. Indeed, since $(\g^ka,b)\pitchfork \eta(\g^ka,b)$, either 
\be
\item[(i)] $\g^k a<\eta b< b$, and hence $\eta^{-1}\g^k$ is a bigger power of $\g$ with the same defining property of $\g^k$ or 
\item[(ii)] $\g^k a<\eta\g^k a<b$ and $\eta\g^k$ is a bigger power of $\g$ with the same defining property of $\g^k$. 
\ee
\end{proof}
The following example illustrates the geometric content of Proposition \ref{lem:simple_ss_geod}:
\begin{example}   
This is a biinfinite geodesic connecting the cusps $c_1,c_2$
with a subpath homotopic to the cusp $c_3$.  The corresponding simple geodesic
just connects $c_1$ to $c_2$. 

\medskip
\begin{center}
\begin{tikzpicture}[scale=.6]
\draw[xshift=0, yshift=10cm, domain=240:300] plot(\x:6);
\draw (.15,-1) to [out=90, in=210] (3,4.5);
\draw (-.15,-1) to [out=90, in=330] (-3,4.5);
\draw (-3.3,4.8) node {$c_1$};
\draw (3.3,4.8) node {$c_2$};
\draw (0,-1.5) node {$c_3$};
\draw (-3,4.65) to [out=-30, in=220] (1.25,3);
\draw (3,4.65) to [out=210, in=-30] (-1.25,3);
\draw[dashed] (-1.25,3) to [out=150, in=30] (1.25,3);
\end{tikzpicture}
\end{center}
\end{example}

\begin{prop}\label{thm:1.5}  Let $\mu$ be a basic geodesic current.
If $(a,b)$ is $\mu$-somewhat short and simple, then $\widetilde\Lambda:=\overline{\Gamma(a,b)}$ is  a $\Gamma$-invariant geodesic
lamination on $\H^2$ consisting of $\mu$-somewhat short geodesics.  
Its projection $\Lambda$ has at most one isolated leaf, which is $\p_\Sigma\left((a,b)\right)$, and at most
two minimal sublaminations.
\end{prop}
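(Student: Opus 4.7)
The plan is to verify the four assertions in order, with the bound on minimal sublaminations identified as the delicate step; the first three rely directly on the material of \S~\ref{subsec:1.0} together with the simplicity of $(a,b)$.

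\emph{Lamination structure and $\mu$-somewhat shortness.} That $\widetilde\Lambda$ is $\Gamma$-invariant is immediate from its construction as an orbit closure. Each translate $\gamma(a,b)$ is $\mu$-somewhat short (since $\mu$ is $\Gamma$-invariant), so Lemma~\ref{lem:convergence of ss} guarantees that every limit, and thus every leaf of $\widetilde\Lambda$, is $\mu$-somewhat short. To see that the leaves are pairwise disjoint, assume for contradiction that $g,h\in\widetilde\Lambda$ satisfy $g\pitchfork h$. Transverse intersection is open in $(\deH^2)^{(2)}\times(\deH^2)^{(2)}$, so we may find $\gamma_1,\gamma_2\in\Gamma$ with $\gamma_1(a,b)\pitchfork\gamma_2(a,b)$, equivalently $(a,b)\pitchfork\gamma_1^{-1}\gamma_2(a,b)$; this forces $\p_\Sigma((a,b))$ to self-intersect, contradicting simplicity.

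\emph{Isolated leaves.} Any leaf of $\widetilde\Lambda$ outside $\Gamma(a,b)$ is, by construction of the orbit closure, the limit of a sequence of pairwise distinct $\Gamma$-translates of $(a,b)$, and hence is not isolated in $\widetilde\Lambda$. Consequently the only leaves of $\widetilde\Lambda$ that can be isolated are those of $\Gamma(a,b)$, and these all project to the single curve $\p_\Sigma((a,b))$.

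\emph{Minimal sublaminations.} This is the main obstacle. Given a minimal sublamination $M\subseteq\Lambda$, its lift $\widetilde M$ is a closed $\Gamma$-invariant sublamination of $\widetilde\Lambda$. If $\widetilde M\cap\Gamma(a,b)\neq\emptyset$, then by $\Gamma$-invariance $\widetilde M\supseteq\Gamma(a,b)$, and by closedness $\widetilde M=\widetilde\Lambda$; in this case $\Lambda$ itself is the unique minimal sublamination. Otherwise every leaf of $M$ lies in the accumulation set of the ``generating'' leaf $\ell_0:=\p_\Sigma((a,b))$ in $\Lambda$. The plan is then to invoke the classical structure result for geodesic laminations that each of the two ends of a leaf asymptotes onto a uniquely determined minimal sublamination. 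This produces at most two minimal sublaminations $M_+,M_-\subseteq\Lambda$, one associated to each end of $\ell_0$, while any remaining leaves in the accumulation set are non-minimal ``diagonals'' asymptoting to them. Since any minimal $M\neq\Lambda$ is connected and disjoint from the other minimal pieces, $M$ must coincide with $M_+$ or $M_-$, yielding the desired bound of two.
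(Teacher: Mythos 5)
Your proof is correct and takes essentially the same route as the paper's (the paper's proof of Proposition~\ref{thm:1.5} is only a few lines long): the orbit closure is $\Gamma$-invariant by construction, its leaves are pairwise disjoint because transverse intersection is open and the simplicity of $(a,b)$ makes the orbit pairwise disjoint, Lemma~\ref{lem:convergence of ss} carries $\mu$-somewhat shortness to the closure, and the counting of isolated leaves and minimal sublaminations is deferred to the general structure theory of geodesic laminations. You spell out the simplicity argument and the isolated-leaf argument that the paper compresses, which is a useful expansion but not a different method.
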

\begin{proof}
  Let us set $\widetilde\Lambda:=\overline{\Gamma(a,b)}$, which is clearly a closed subset of $\H^2$. Since for two geodesics to intersect in $\H^2$ is an open condition, $\widetilde\Lambda$ is  disjoint union of geodesics and hence, by definition, a lamination.

The fact that $\Lambda$ has at most one isolated leaf and almost two minimal components 
follows from the general structure theory of laminations. 
Since $(a,b)$ is $\mu$-somewhat short, it follows from Lemma~\ref{lem:convergence of ss}
that every  geodesic in $\wt\Lambda$ is $\mu$-somewhat short.  
\end{proof}
Observe that, in general, the lamination $\wt \Lambda$ might be very degenerate:
if $(a,b)$ is $\mu$-somewhat short and both $a$ and $b$ are cusps, then the geodesic $\lambda=\p_\Sigma\left((a,b)\right)$
is asymptotic to two cusps.  In this case the $\mu$-somewhat short and simple geodesic $(a,b')$ provided
by Proposition~\ref{lem:simple_ss_geod} projects then to a non-self-intersecting geodesic $\lambda'$ that is asymptotic to $\lambda$.
Thus $\lambda'$ is a properly  embedded copy of $\bR$, hence a closed subset. 
In order to avoid this case, we have the following definition: 
\begin{defn}
A geodesic $(a,b)$ is \emph{recurrent} in $\Sigma_\mu$ if at least one between $a$ and $b$ is not a fixed point of a peripheral element in $\Gamma_\mu$.
\end{defn}
Observe that a geodesic $(a,b)$ is recurrent if and only if it is recurrent in the open surface $\mathring\Sigma_\mu$, namely if there are compact subsets in the interior of $\Sigma_\mu$ that are visited infinitely often.  

The aim of the rest of the chapter is to prove the following refinement of Theorem \ref{thm_intro:dec} from the introduction:

\begin{thm}\label{thm:main_currents}  Let $\mu$ be a basic geodesic current.  Then the following are equivalent:
\be
\item\label{item:main_currents(1)} $\Syst_{\Sigma_\mu}(\mu)=0$;
\item\label{item:main_currents(2)} There exists a recurrent geodesic $(a,b)$ that is $\mu$-somewhat short;
\item\label{item:main_currents(3)} The support of $\mu$ is a $\Gamma$-invariant lamination in $\H^2$
whose projection on $\mathring\Sigma_\mu$ is compactly supported, minimal and surface filling.
\ee
\end{thm}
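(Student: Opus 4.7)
The plan is to establish the equivalence via the cyclic chain \eqref{item:main_currents(3)}$\Rightarrow$\eqref{item:main_currents(1)}$\Rightarrow$\eqref{item:main_currents(2)}$\Rightarrow$\eqref{item:main_currents(3)}, exploiting the structural results developed in the preceding subsections together with the adaptation of Martone--Zhang encapsulated in Proposition~\ref{prop:1.23}.

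The implication \eqref{item:main_currents(3)}$\Rightarrow$\eqref{item:main_currents(1)} is classical: if $\supp(\mu)$ lifts a minimal, compactly supported, surface filling measured lamination in $\mathring\Sigma_\mu$, then $\mu\in\MLc(\Sigma)$ and one approximates a dense leaf by a sequence of non-peripheral closed geodesics $c_n\subset\mathring\Sigma_\mu$ whose $\mu$-intersection tends to zero (see \cite[Section~6.4]{Morzadec}); the $c_n$ represent non-peripheral elements of $\Gamma_\mu$ and witness $\Syst_{\Sigma_\mu}(\mu)=0$.

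For \eqref{item:main_currents(1)}$\Rightarrow$\eqref{item:main_currents(2)} I would invoke Proposition~\ref{prop:1.23}. Starting from a sequence of non-peripheral hyperbolic $\gamma_n\in\Gamma_\mu$ with $i(\mu,\delta_{\gamma_n})\to 0$, translate the axes by elements of $\Gamma$ so that they all meet a fixed compact subset of $\mathring\Sigma_\mu$, and extract a subsequential limit $(a,b)$ of the pairs of fixed points. Lemma~\ref{lem:convergence of ss} gives that $(a,b)$ is $\mu$-somewhat short, while the non-peripherality of the $\gamma_n$ combined with a no-escape-to-cusps argument (which Proposition~\ref{prop:1.23} will package) ensures that $(a,b)$ is recurrent in $\Sigma_\mu$.

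The main content lies in \eqref{item:main_currents(2)}$\Rightarrow$\eqref{item:main_currents(3)}. Given a recurrent $\mu$-somewhat short $(a,b)$, apply Proposition~\ref{lem:simple_ss_geod} to reduce to the case in which $(a,b)$ is simple (recurrence is preserved since at most one endpoint is replaced by its image under a peripheral element), then apply Proposition~\ref{thm:1.5} to produce a $\Gamma$-invariant lamination $\widetilde\Lambda=\overline{\Gamma\cdot(a,b)}$ whose leaves are all $\mu$-somewhat short and whose projection has at most two minimal sublaminations and at most one isolated leaf. Recurrence singles out a minimal sublamination $\widetilde\Lambda_0$ that is neither a closed geodesic nor a configuration of leaves escaping to cusps, and the dynamical argument of Section~\ref{subsec:1.3} shows that its projection is compactly supported and surface filling in $\mathring\Sigma_\mu$. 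To conclude that $\supp(\mu)=\widetilde\Lambda_0$, I note that any $\sigma\in\supp(\mu)$ transverse to a leaf $\ell\in\widetilde\Lambda_0$ would force $\mu(\ioo{\ell_-}{\ell_+}\times\ioo{\ell_+}{\ell_-})>0$ by regularity of $\mu$, contradicting the $\mu$-somewhat shortness of $\ell$; the surface filling property then confines $\supp(\mu)$ inside $\widetilde\Lambda_0$, and minimality of $\widetilde\Lambda_0$ together with $\Sigma_\mu$ being the supporting surface of $\mu$ yields equality. The hardest step is the surface filling conclusion: one must rule out the possibility that $\widetilde\Lambda_0$ projects into a proper subsurface of $\mathring\Sigma_\mu$, and this is precisely where the basicity hypothesis on $\mu$ is essential, since otherwise one could split $\Sigma_\mu$ along simple closed $\mu$-somewhat short curves, contradicting the minimality built into the definition of the supporting surface.
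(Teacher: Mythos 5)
Your overall architecture mirrors the paper's proof: the same three implications, the same citations to Proposition~\ref{lem:simple_ss_geod}, Proposition~\ref{thm:1.5}, Proposition~\ref{thm:1.13} and Proposition~\ref{lem:1.17} for \eqref{item:main_currents(2)}$\Rightarrow$\eqref{item:main_currents(3)}, and the same first-return argument from a transversal of small measure for \eqref{item:main_currents(3)}$\Rightarrow$\eqref{item:main_currents(1)}. However there are two genuine gaps.

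First, your argument for \eqref{item:main_currents(1)}$\Rightarrow$\eqref{item:main_currents(2)} does not work as written. You take non-peripheral $\gamma_n$ with $i(\mu,\delta_{\gamma_n})\to0$, translate so that the axes all meet a fixed compact set, extract a limit $(a,b)$ of axes, and then invoke Lemma~\ref{lem:convergence of ss} to conclude $(a,b)$ is $\mu$-somewhat short. But Lemma~\ref{lem:convergence of ss} only says the set of $\mu$-somewhat short geodesics is \emph{closed}; since $i(\mu,\delta_{\gamma_n})>0$, the axes $(\gamma_{n,-},\gamma_{n,+})$ are \emph{not} $\mu$-somewhat short, so the lemma does not apply to your sequence. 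The correct argument (this is the content of Proposition~\ref{prop:1.25} in the paper) uses instead that $\ell(\gamma_n)\to\infty$: for a fixed sextuple $(x,a,y,z,b,w)$ one shows $\gamma_n y\in\ioo{z}{w}$ for large $n$, whence $\mu(\ioo{w}{x}\times\ioo{y}{z})\le i(\mu,\delta_{\gamma_n})\to0$, and this direct estimate --- not closedness of the $\mu$-somewhat short set --- is what gives $\mu$-somewhat shortness of $(a,b)$. Moreover, Proposition~\ref{prop:1.25} first passes to \emph{simple} closed geodesics via Corollary~\ref{cor:1.24}, which itself relies on Proposition~\ref{prop:1.23}; your description of Proposition~\ref{prop:1.23} as packaging a ``no-escape-to-cusps'' argument misstates its content, which is the reduction of self-intersection number under surgery at an intersection point. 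Recurrence of the limit in the paper follows from simplicity and non-isolation (a simple non-isolated leaf cannot have both endpoints at cusps or boundary fixed points), not from Proposition~\ref{prop:1.23}.

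Second, you never address the thrice punctured sphere case. Proposition~\ref{prop:1.23} (and hence Corollary~\ref{cor:1.24}, which you need to produce simple curves with small intersection) is stated only when $\Sigma$ is not the thrice punctured sphere. The paper deals with this by first proving separately (Proposition~\ref{prop_intro:prop1}) that a non-zero current on a thrice punctured sphere always has positive systole, so that this case is excluded at the outset of the proof of Theorem~\ref{thm:main_currents}. Without this reduction your chain of implications is incomplete. A smaller point: your one-line argument that $\supp(\mu)\subset\widetilde\Lambda_0$ (``any $\sigma\in\supp(\mu)$ transverse to a leaf $\ell$ would force positivity'') ignores the case where $\sigma$ lies entirely inside a complementary region of $\widetilde\Lambda_0$ without crossing any leaf; the paper handles this via Corollary~\ref{lem:1.15} by finding a $\mu$-somewhat short \emph{diagonal} of that region transverse to $\sigma$. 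Your appeal to the surface filling property gestures at this but does not supply the diagonal argument.
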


We say that a lamination is {\em surface filling} if the complementary regions in $\Sigma_\mu$ are only ideal polygons, ideal polygons containing one cusp and ideal polygons containing one boundary component.

We will also establish that, under the equivalent conditions of Theorem~\ref{thm:main_currents}, the function
\bqn
(a,b,c,d)\mapsto\mu(\ioo{a}{b}\times \ioo{c}{d})
\eqn
is continuous, provided $\icc{a}{b}$ and $\icc{c}{d}$ are disjoint.

\begin{rem}
It is possible to show, using Lemma \ref{lem:2.2} and Lemma \ref{lem:convergence of ss} that if $\mu$ is a basic geodesic current, then there cannot exist a $\mu$-somewhat short geodesic $(a,b)$ connecting two boundary components of the supporting surface $\Sigma_v$: otherwise the boundary components of the pair of pants determined by $(a,b)$ would be $\mu$-somewhat short.
\end{rem}
\subsection{Minimal laminations and complementary regions}\label{subsec:1.3}
Recall that if $\Lambda\subset\Sigma=\Gamma\backslash\H^2$ is a compactly supported geodesic lamination 
in a complete finite area hyperbolic surface, the complement $\Sigma\smallsetminus\Lambda$ 
is a finite union of components of the following types \cite[Theorem I.4.2.8]{notesonnotes}:
\be
\item an ideal polygon;
\item an ideal polygon containing one cusp;
\item a totally geodesic subsurface with geodesic boundary to which one has added a \emph{crown} for each boundary geodesic (such a subsurface can possibly be reduced to a single geodesic).
\ee
\noindent A crown is an infinite cylinder bounded by finitely many ideal sides.
A compactly supported geodesic lamination $\Lambda\subset\Sigma$ \emph{fills} a subsurface $\Sigma'\subseteq \Sigma$ with totally geodesic boundary if, for every closed curve $c\subset \mathring{\Sigma}'$, $c$ intersects some leaf of $\Lambda$. Observe that a measured geodesic lamination $(\Lambda, m)$, when regarded as a geodesic current, never fills a surface $\Sigma$, not even when the lamination $\Lambda$ does fill. A geodesic lamination $\Lambda$ is \emph{minimal} if every half ray of a geodesic belonging to $\Lambda$ is dense in $\Lambda$.

A crucial step in the proof of the implication $(2)\Rightarrow (3)$ in Theorem \ref{thm:main_currents} consists of showing that any geodesic bounding a crown of a $\mu$-somewhat short lamination $\wt \Lambda$ is itself $\mu$-somewhat short (Proposition \ref{thm:1.13} below). A necessary condition for this to be true is expressed in the following lemma:


\begin{lem}\label{lem:1.14}  Let $\mu$ be a geodesic current and assume that 
there exists a $\Gamma$-invariant lamination $\wt \Lambda$ consisting of $\mu$-somewhat short geodesics without isolated leaves. 
Let $a,b,c$ be consecutive vertices of a complementary region $\mathcal R$ of $\wt \Lambda$ 
labelled in such a way that $(c,b,a)$ is positively oriented.  Then 
\bqn
\mu(\{b\}\times\icc{a}{c})=0\,.
\eqn
\end{lem}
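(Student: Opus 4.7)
Using the no-isolated-leaves hypothesis, I would first produce a sequence $\ell_n = (x_n, y_n) \in \widetilde\Lambda \setminus \{(b,a)\}$ with $\ell_n \to (b,a)$. Since $\mathcal R$ contains no leaves and distinct leaves of a lamination are disjoint, the $\ell_n$ lie on the side of $(b,a)$ opposite to $\mathcal R$, forcing $x_n, y_n \in I_{(b,a)}$ with $x_n \to b$ and $y_n \to a$. I would pick an analogous sequence $\ell_n' = (x_n', y_n')$ approximating $(b,c)$ with $x_n', y_n' \in I_{(c,b)}$, $x_n' \to c$, $y_n' \to b$. Each $\ell_n$ being $\mu$-somewhat short gives $\mu(I_{(x_n, y_n)} \times I_{(y_n, x_n)}) = 0$, and likewise for $\ell_n'$.

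I would then apply Fatou's lemma for sets: any pair $(u,v) \in I_{(b,a)} \times I_{[a,b]}$ (with $I_{[a,b]}$ the closed arc from $a$ through $c$ to $b$) lies in $I_{(x_n, y_n)} \times I_{(y_n, x_n)}$ for all sufficiently large $n$, since $x_n \to b$ and $y_n \to a$ force $u$ to sit CCW between $x_n$ and $y_n$, while $v$ lies on the complementary long arc through $a,c,b$. Thus $\liminf_n \bigl(I_{(x_n, y_n)} \times I_{(y_n, x_n)}\bigr) \supseteq I_{(b,a)} \times I_{[a,b]}$, and hence $\mu(I_{(b,a)} \times I_{[a,b]}) = 0$. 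The symmetric argument with $\ell_n'$ yields $\mu(I_{(c,b)} \times I_{[b,c]}) = 0$. Since $I_{[a,c]} \subset I_{[a,b]} \cap I_{[b,c]}$, I obtain $\mu\bigl((I_{(b,a)} \cup I_{(c,b)}) \times I_{[a,c]}\bigr) = 0$.

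To promote this vanishing to $\mu(\{b\} \times I_{[a,c]}) = 0$, my plan is to exhibit a $\mu$-somewhat-short geodesic whose endpoints flank $b$ on opposite sides and thereby separate $\{b\}$ from $I_{[a,c]}$ on the circle. The natural candidate is the chord $(y_n', x_n)$: its short arc $I_{(y_n', x_n)}$ contains $b$ and its long arc $I_{(x_n, y_n')}$ contains $I_{[a,c]}$, so its $\mu$-somewhat-short condition would immediately give $\mu(\{b\} \times I_{[a,c]}) \le \mu(I_{(y_n', x_n)} \times I_{(x_n, y_n')}) = 0$.

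The hardest part, and the main obstacle, is to establish that $(y_n', x_n)$ is $\mu$-somewhat short. It is not itself a leaf of $\widetilde\Lambda$, and the four $\mu$-somewhat-short geodesics $\ell_n$, $\ell_n'$, $(b,a)$, $(b,c)$ are pairwise non-crossing, which blocks a direct application of Lemma \ref{lem:2.2p}. I expect to close this gap either by iterating Lemma \ref{lem:2.2p} along a carefully chosen intersecting pair of $\mu$-somewhat-short geodesics built from deeper approximations, or by combining the two Fatou bounds above with a limit argument exploiting outer regularity of the Radon measure $\mu$ at the point $b$, decomposing the measure of a small open neighborhood $J \times I_{[a,c]}$ of $\{b\} \times I_{[a,c]}$ across the partition $J = I_{(y_n',b)} \cup \{b\} \cup I_{(b,x_n)}$ and using the vanishing already established on the two flanking pieces.
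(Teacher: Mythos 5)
Your first two paragraphs establish a correct (and useful) partial vanishing, $\mu\big((I_{(b,a)}\cup I_{(c,b)})\times\icc{a}{c}\big)=0$, and you correctly flag that the real content of the lemma, the slice $\{b\}\times\icc{a}{c}$, is untouched. But the gap you identify in paragraphs 3--4 is not a technical difficulty to be patched; it is an indication that the whole local strategy is the wrong one. Every geodesic $(b,v)$ with $v\in\icc{a}{c}$ shares the endpoint $b$ with the two sides $(a,b),(b,c)$ of $\mathcal R$, so it crosses neither of them, and it lies in the ``cone'' of $\mathcal R$ at $b$, so it also fails to cross any of the approximating leaves $\ell_n,\ell_n'$ that sit on the far side of those sides. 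In other words, no $\mu$-somewhat-short leaf near $b$ meets any geodesic $(b,v)$ transversally, so the somewhat-short hypotheses place \emph{no constraint whatsoever} on $\mu(\{b\}\times\icc{a}{c})$. (A $\Gamma$-equivariant ``toy'' measure with a point mass at $b$ along $\icc{a}{c}$ would satisfy every local somewhat-short inequality you have available; it only fails to be a geodesic current because its $\Gamma$-orbit would have infinite mass on a compact set of geodesics.) Both of your proposed repairs inherit this problem: $(y_n',x_n)$ is not a leaf and there is no reason it should be $\mu$-somewhat short — indeed if the lemma's conclusion failed it would not be; and the outer-regularity idea is circular, since once $\mu\big((J\setminus\{b\})\times\icc{a}{c}\big)=0$ is known the infimum of $\mu(J\times\icc{a}{c})$ over neighborhoods $J\ni b$ is exactly the atom $\mu(\{b\}\times\icc{a}{c})$ you are trying to bound.

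The paper's proof is global and dynamical, and uses exactly the two ingredients your approach never invokes: $\Gamma$-invariance and local finiteness of the Radon measure $\mu$ (in the form $i(\mu,\delta_\gamma)<\infty$). One picks a hyperbolic $\gamma\in\Gamma$ whose axis crosses $(a,b)$ and $(b,c)$ and takes a fundamental segment $I$ of length $L$ on that axis as a transversal; the set of geodesics meeting $I$ has finite $\mu$-mass $T=i(\mu,\delta_\gamma)$. Because $\wt\Lambda$ has no isolated leaves, the ray $r$ from a point $p$ deep inside $\mathcal R$ toward $b$ has dense projection in a minimal component, hence returns infinitely often across the transversal $\p_\Sigma(I)$. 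Lifting these returns produces infinitely many elements $\eta_1,\eta_2,\dots\in\Gamma$ such that the sets $\{\eta_i(b)\}\times\icc{\eta_i(a)}{\eta_i(c)}$ are pairwise disjoint and all contained in the set of geodesics crossing $I$. If $K:=\mu(\{b\}\times\icc{a}{c})>0$, then $\Gamma$-invariance gives $\mu$-mass $MK>T$ from $M$ of these translates, a contradiction. So the correct proof is a pigeonhole/recurrence argument, not a limit of somewhat-short inequalities. You should abandon the attempt to deduce the $\{b\}$-slice from nearby leaves and instead bring in the transversal $I$, recurrence, and the finiteness of $i(\mu,\delta_\gamma)$.
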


\begin{proof} 
Let $\gamma\in\Gamma$ be an hyperbolic element whose axis $(\gamma_-,\gamma_+)$
crosses  $(a,b)$ and $(b,c)$ and $(\g^-,a,c,\g^+,b)$ is positively oriented. 
Denote by $p_0$ the intersection $(\g_-,\g_+)\cap(a,b)$ and 
focus on the subsegment $I$ of the axis of $\gamma$ with endpoints  $\g^{-1}p_0, p_0$. 
Denote by $L$ the hyperbolic distance $L=d(\g^{-1}p_0, p_0)$ and choose a point $p$ far enough in the ray from $p_0$ to $b$ 
so that its distance from any side of $\Rr$ different from $(a,b)$ or $(b,c)$ is bigger than $2L$.
Denote by $r$ the ray from $p$ to $b$.

\begin{center}
\begin{tikzpicture}
\draw (0,0) circle [radius=2];
\filldraw (0,2) coordinate(B) circle [radius=1pt] node[above] {$b$};
\filldraw (-1,-1.73) coordinate(A) circle [radius=1pt] node[below left] {$a$};
\filldraw (2,0) coordinate(C) circle [radius=1pt] node[right] {$c$};
\draw (B) arc[start angle=180, end angle=270, radius=2];
\draw (A) arc[start angle=326, end angle=360, radius=6];
\shadedraw[top color=green, bottom color=white, draw=black, shading=axis,shading angle=20]
  (A) arc[start angle=326, end angle=360, radius=6] --
  (B) arc[start angle=180, end angle=270, radius=2] --
  (C) arc[start angle=360, end angle=240, radius=2] -- cycle;
\filldraw (0,2) coordinate(B) circle [radius=1pt] node[above] {$b$};
\filldraw (-1,-1.73) coordinate(A) circle [radius=1pt] node[below left] {$a$};
\filldraw (2,0) coordinate(C) circle [radius=1pt] node[right] {$c$};  
\filldraw (-1.96,0.4) circle [radius=1pt];
\draw (-2.246, .5) node {$\gamma_-$};
\filldraw (1.96,.4) circle [radius=1pt];
\draw (2.246, .5) node {$\gamma_+$};
\draw[xshift=0, yshift=5cm, domain=247:293] plot(\x:5);
\draw[red, very thick, xshift=-6cm, yshift=1.6cm, domain=352.5:364] plot(\x:6.015);
\draw[red] (-.2,1.2) node {$r$};
\filldraw (0,2) coordinate(B) circle [radius=1pt] node[above] {$b$};
\filldraw (-1,-1.73) coordinate(A) circle [radius=1pt] node[below left] {$a$};
\filldraw (2,0) coordinate(C) circle [radius=1pt] node[right] {$c$};  
\filldraw (-.035,.82) circle [radius=1pt];
\draw (.1,.6) node {$p$}; 
\draw[blue, very thick, xshift=0, yshift=5cm, domain=257:268] plot(\x:5);
\draw[blue, very thick] (-.6,-.2) node {$I$}; 
\filldraw (-.2,0) circle [radius=1pt];
\draw (0,.-.2) node {$p_0$}; 
\filldraw (-1.1,.12) circle [radius=1pt];
\draw (-1.4,-.1) node {$\gamma^{-1}p_0$};
\draw (.5,-.5) node {$\Rr$};
\end{tikzpicture}
\end{center}

Since $\wt \Lambda$ has no isolated leaves,  $\p_\Sigma(r)$ is dense in a minimal component of $\Lambda:=\p_\Sigma(\wt\Lambda)$ and in particular meets the transversal
$\p_\Sigma(I)$ in infinitely many points.  
Thus, moving to the universal covering $\H^2$ and choosing appropriate representatives,
we find a sequence of points $p_n\in r$ and a sequence of elements $\gamma_n\in\Gamma$ such that  $\gamma_n(p_n)\in I$ for every $n\geq1$.

Since any point $p_n$ in $r$ has distance more than $2L$ from any side of $\Rr$ different from $(a,b)$ or $(b,c)$ 
and $\g_n\Rr\cap I $ is contained in $I$, 
we get that $\g_n\Rr\cap I $  is an interval bounded by $\g_n(p_n)$ and $\g_n(q_n)$ for some $q_n$ in $(b,c)$.
Observe that the $\g_n\Rr$ are pairwise distinct: 
indeed for all $\gamma\in \Gamma$, the intersection $\gamma\Rr\cap I$ is connected, if not empty. 
This implies that, if $\gamma_n\Rr=\gamma_m\Rr$ then  $\g_n b=\gamma_m b$ 
which implies that $\g_n=\g_m$ since $\widetilde\Lambda$ has no isolated leaves, and hence its endpoint at infinity have trivial stabilizer.

Using the natural ordering on $I$,
we have for each $n$, either
\bqn
\gamma_n(p_n)<\gamma_n(q_n) \quad \text{ or }\quad \gamma_n(q_n)<\gamma_n(p_n)\,.
\eqn
Passing to a subsequence we may assume that $\gamma_n(p_n)<\gamma_n(q_n)$ for all $n\geq1$.

\begin{minipage}{.5\textwidth}
\begin{center}
\begin{tikzpicture}
\draw (0,0) circle [radius=2];
\filldraw (0,2) coordinate(B) circle [radius=1pt] node[above] {$b$};
\filldraw (-1,-1.73) coordinate(A) circle [radius=1pt] node[below left] {$a$};
\filldraw (2,0) coordinate(C) circle [radius=1pt] node[right] {$c$};
\draw (B) arc[start angle=180, end angle=270, radius=2];
\draw (A) arc[start angle=326, end angle=360, radius=6];
\shadedraw[top color=green, bottom color=white, draw=black, shading=axis, shading angle=20]
  (A) arc[start angle=326, end angle=360, radius=6] --
  (B) arc[start angle=180, end angle=270, radius=2] --
  (C) arc[start angle=360, end angle=240, radius=2] -- cycle;
\filldraw (0,2) coordinate(B) circle [radius=1pt] node[above] {$b$};
\filldraw (-1,-1.73) coordinate(A) circle [radius=1pt] node[below left] {$a$};
\filldraw (2,0) coordinate(C) circle [radius=1pt] node[right] {$c$};  
\filldraw (-.57,1.92) coordinate(GNB) circle [radius=1pt] node[above] {\tiny{$\gamma_n(b)$}};
\filldraw (-1.72,-1.05) coordinate(GNA) circle [radius=1pt]  node[below left] {\tiny{$\gamma_n(a)$}};
\filldraw (-1.43,-1.412) coordinate(GNC) circle [radius=1pt] node[below left] {\tiny{$\gamma_n(c)$}};
\shadedraw[top color=green, bottom color=white, draw=black, shading=axis,shading angle=-10]
  	(GNB) arc[start angle=354.5, end angle=323, radius=5.8] --
  	(GNA) arc[start angle=210, end angle=224, radius=2] --
	(GNC) arc[start angle=323, end angle=368, radius=4.5] -- cycle;
\filldraw (-.57,1.92) coordinate(GNB) circle [radius=1pt] node[above] {\tiny{$\gamma_n(b)$}};
\filldraw (-1.72,-1.05) coordinate(GNA) circle [radius=1pt]  node[below left] {\tiny{$\gamma_n(a)$}};
\filldraw (-1.43,-1.412) coordinate(GNC) circle [radius=1pt] node[below left] {\tiny{$\gamma_n(c)$}};
\filldraw (-1.414,1.414) circle [radius=1pt];
\draw (-1.7, 1.5) node {$\gamma_-$};
\filldraw (1.414,1.414) circle [radius=1pt];
\draw (1.7, 1.5) node {$\gamma_+$};
\draw[xshift=0, yshift=2.82cm, domain=225:315] plot(\x:2);
\draw[blue, thick, xshift=0, yshift=2.82cm, domain=240:269] plot(\x:2);
\draw[blue, very thick] (-.06,.82) node {$)$};
\draw[blue, very thick] (-.95,1) [rotate=0]  node [rotate=-20] {$($};
\draw[blue, very thick] (-.3,.6) node {$I$}; 
\filldraw (-.75,.955) circle [radius=1pt];
\draw (-1.5,1.9) node {\tiny{$\gamma_n(p_n)$}};
\draw[->] (-1.35,1.75) -- (-.77,1.03);
\filldraw (-.61,.91) circle [radius=1pt];
\draw (.7,2.2) node {\tiny{$\gamma_n(q_n)$}};
\draw[->] (.6,2.05) -- (-.55,.95);
\draw (.5,-.5) node {$\Rr$};
\end{tikzpicture}
\end{center}\end{minipage}
\begin{minipage}{.5\textwidth}
\begin{center}
\begin{tikzpicture}
\draw (0,0) circle [radius=2];
\filldraw (0,2) coordinate(B) circle [radius=1pt];
\draw (0.2,2.25) node {$b$};
\filldraw (-1,-1.73) coordinate(A) circle [radius=1pt] node[below left] {$a$};
\filldraw (2,0) coordinate(C) circle [radius=1pt] node[right] {$c$};
\draw (B) arc[start angle=180, end angle=270, radius=2];
\draw (A) arc[start angle=326, end angle=360, radius=6];
\shadedraw[top color=green, bottom color=white, draw=black, shading=axis,shading angle=20]
  (A) arc[start angle=326, end angle=360, radius=6] --
  (B) arc[start angle=180, end angle=270, radius=2] --
  (C) arc[start angle=360, end angle=240, radius=2] -- cycle;
\filldraw (0,2) coordinate(B) circle [radius=1pt];
\draw (0.2,2.25) node {$b$};
\filldraw (-1,-1.73) coordinate(A) circle [radius=1pt] node[below left] {$a$};
\filldraw (2,0) coordinate(C) circle [radius=1pt] node[right] {$c$};  
\filldraw (-1.55,-1.26) coordinate(GNB) circle [radius=1pt] node[below left] {\tiny{$\gamma_n(b)$}};
\filldraw (-.56,1.92) coordinate(GNC) circle [radius=1pt] node[above left] {\tiny{$\gamma_n(c)$}};
\filldraw (-.24,1.99) coordinate(GNA) circle [radius=1pt] node[above] {\tiny{$\gamma_n(a)$}};
\shadedraw[top color=green, bottom color=white, draw=black, shading=axis,shading angle=90]
(GNA) arc[start angle=354.5, end angle=322, radius=6.2] --
(GNB) arc[start angle=323, end angle=362.7, radius=4.9] -- cycle;
\filldraw (-1.55,-1.26) coordinate(GNB) circle [radius=1pt] node[below left] {\tiny{$\gamma_n(b)$}};
\filldraw (-.56,1.92) coordinate(GNC) circle [radius=1pt] node[above left] {\tiny{$\gamma_n(c)$}};
\filldraw (-.24,1.99) coordinate(GNA) circle [radius=1pt] node[above] {\tiny{$\gamma_n(a)$}};
\filldraw (-1.414,1.414) circle [radius=1pt];
\draw (-1.7, 1.5) node {$\gamma_-$};
\filldraw (1.414,1.414) circle [radius=1pt];
\draw (1.7, 1.5) node {$\gamma_+$};
\draw[xshift=0, yshift=2.82cm, domain=225:315] plot(\x:2);
\draw[blue, thick, xshift=0, yshift=2.82cm, domain=240:269] plot(\x:2);
\draw[blue, very thick] (-.06,.82) node {$)$};
\draw[blue, very thick] (-.95,1) [rotate=0]  node [rotate=-20] {$($};
\draw[blue, very thick] (-.3,.6) node {$I$}; 
\filldraw (-.62,.91) circle [radius=1pt];
\draw (-1.8,1.9) node {\tiny{$\gamma_n(q_n)$}};
\draw[->] (-1.5,1.75) -- (-.7,1);
\filldraw (-.45,.87) circle [radius=1pt];
\draw (1,2.1) node {\tiny{$\gamma_n(p_n)$}};
\draw[->] (.6,2) -- (-.34,.9);
\draw (.5,-.5) node {$\mathcal R$};
\end{tikzpicture}
\end{center}
\end{minipage}

\noindent Let $K:=\mu(\{b\}\times\icc{a}{c})$ and let 
\bqn
T:=\mu\{g\in(\deH^2)^{(2)}:\,g\cap I\neq\varnothing\}=i(\mu,\delta_\g)\,.
\eqn
Assume by contradiction  that $K>0$ and choose $M\geq1$ with 
\bqn
M\,K>T\,.
\eqn
Then we can find $M$ elements in $(\gamma_n)_{n\geq1}$, that we rename $\eta_1,\dots,\eta_M$,
and that have the properties that  the intervals
\bqn
 I_{\eta_1(a),\eta_1(c)}\,,  I_{\eta_2(a),\eta_2(c)},\dots, I_{\eta_M(a),\eta_M(c)}
\eqn
are consecutive in $\deH^2$, and have pairwise disjoint closures.
Then it follows that 
\bqn
\mu\left(\bigcup_{i=1}^M(\{\eta_i(b)\}\times\icc{\eta_i(a)}{\eta_i(c)})\right)
=\sum_{i=1}^M\mu(\{\eta_i(b)\}\times\icc{\eta_i(a)}{\eta_i(c)})
=M\, K>T\,,
\eqn
while on the other hand we clearly have
\bqn
\bigcup_{i=1}^M(\{\eta_i(b)\}\times\icc{\eta_i(a)}{\eta_i(c)})\subset\{g\in(\deH^2)^{(2)}:\,g\cap I\neq\varnothing\}\,.
\eqn
This leads to a contradiction and proves the lemma.
\end{proof}
As a corollary we obtain the following:
\begin{cor}\label{lem:1.15}  Let $x_0,\dots,x_n$ be a sequence of consecutive vertices 
of a complementary region $\mathcal R$
labelled in such a way that $(x_n,\dots,x_0)$ is positively oriented.
Then $(x_0,x_n)$ is $\mu$-somewhat short.
\end{cor}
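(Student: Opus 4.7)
The plan is to argue by induction on $k$ that the ``diagonal'' $(x_0, x_k)$ of the complementary region $\mathcal R$ is $\mu$-somewhat short for every $1 \leq k \leq n$. The base case $k = 1$ is immediate: $(x_0, x_1)$ is an edge of $\mathcal R$, hence a leaf of $\wt\Lambda$, and by the standing hypothesis of Lemma~\ref{lem:1.14} every leaf of $\wt\Lambda$ is $\mu$-somewhat short.

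For the inductive step, suppose $(x_0, x_{k-1})$ is $\mu$-somewhat short. The positive orientation of $(x_n, \dots, x_0)$ places $x_{k-1}$ in the open arc $\ioo{x_k}{x_0}$, giving the partition $\ioo{x_k}{x_0} = \ioo{x_k}{x_{k-1}} \sqcup \{x_{k-1}\} \sqcup \ioo{x_{k-1}}{x_0}$. By $\sigma$-additivity of $\mu$, it suffices to show that each of the three product sets $\ioo{x_0}{x_k} \times \ioo{x_k}{x_{k-1}}$, $\ioo{x_0}{x_k} \times \{x_{k-1}\}$, and $\ioo{x_0}{x_k} \times \ioo{x_{k-1}}{x_0}$ has vanishing $\mu$-measure. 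The first and third are routine by monotonicity: the orientation yields the inclusions $\ioo{x_0}{x_k} \subset \ioo{x_{k-1}}{x_k}$ and $\ioo{x_0}{x_k} \subset \ioo{x_0}{x_{k-1}}$, which bound them respectively by $\mu(\ioo{x_{k-1}}{x_k} \times \ioo{x_k}{x_{k-1}}) = 0$ (since the leaf $(x_{k-1}, x_k)$ of $\wt\Lambda$ is $\mu$-somewhat short) and by $\mu(\ioo{x_0}{x_{k-1}} \times \ioo{x_{k-1}}{x_0}) = 0$ (by the induction hypothesis).

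The main obstacle is the middle piece $\ioo{x_0}{x_k} \times \{x_{k-1}\}$, a potential point-mass contribution concentrated at the vertex $x_{k-1}$. This is precisely what Lemma~\ref{lem:1.14} was designed to control: applied to the three consecutive vertices $x_{k-2}, x_{k-1}, x_k$ of $\mathcal R$, whose triple $(x_k, x_{k-1}, x_{k-2})$ is positively oriented as required by that lemma, it yields $\mu(\{x_{k-1}\} \times \icc{x_{k-2}}{x_k}) = 0$. Since $\ioo{x_0}{x_k} \subset \icc{x_{k-2}}{x_k}$ (as $x_0$ lies on the arc from $x_{k-2}$ ccw to $x_k$) and $\mu$ is flip-invariant, the middle term also vanishes, completing the induction.
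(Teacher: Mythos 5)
Your proof is correct and follows essentially the same route as the paper's: induction on the diagonal index, a three-part decomposition of the product set isolating the potential point mass at the intermediate vertex, with the edge case handled by the short-ness of the leaf $(x_{k-1},x_k)$, the deep piece by the inductive hypothesis, and the atom at $x_{k-1}$ killed by Lemma~\ref{lem:1.14}. The only cosmetic difference is that you split the second factor $\ioo{x_k}{x_0}$ into a disjoint union while the paper splits the first factor $\ioo{x_n}{x_0}$ and refines the second via intersections, which is the same argument up to flip-invariance.
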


\begin{proof} The proof proceeds by recurrence.
For $n=1$ the statement holds.
Let us now suppose $n\geq2$.  We have the following equalities
\bqn
\ba
\ioo{x_n}{x_0}&=\ioc{x_n}{x_{n-1}}\cup\ioo{x_{n-1}}{x_0}\\
\ioo{x_0}{x_n}&=\ioo{x_{n-1}}{x_n}\cap\ioo{x_{n-2}}{x_n}\cap\ioo{x_0}{x_{n-1}}\,.
\ea
\eqn
Thus 
\bqn
\ba
\ioo{x_n}{x_0}\times\ioo{x_0}{x_n}
\subset&(\ioo{x_n}{x_{n-1}}\times\ioo{x_{n-1}}{x_n})\\
    &\cup\{x_{n-1}\}\times\ioo{x_{n-2}}{x_n}\\
    &\cup\ioo{x_{n-1}}{x_0}\times\ioo{x_0}{x_{n-1}}\,.
\ea
\eqn
Using that $(x_{n-1},x_n)$ is $\mu$-somewhat short, the induction hypothesis that 
$(x_0,x_{n-1})$ is $\mu$-somewhat short and Lemma~\ref{lem:1.14} we get
to the conclusion that $(x_0,x_{n-1})$ is $\mu$-somewhat short.
\end{proof}

We can now complete the proof of the following result, announced at the beginning of the section:
\begin{prop}\label{thm:1.13}  Let $\mu$ be a geodesic current, $\wt \Lambda$ be a $\G$-invariant  lamination consisting of $\mu$-somewhat short geodesics without isolated leaves. Then any geodesic corresponding to a crown is $\mu$-somewhat short.
%
\end{prop}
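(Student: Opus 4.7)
The plan is to deduce the statement directly from Corollary~\ref{lem:1.15} (which produces $\mu$-somewhat short geodesics between consecutive vertices of a complementary region of $\wt\Lambda$) together with the closedness of the set of $\mu$-somewhat short geodesics (Lemma~\ref{lem:convergence of ss}). The key move is to approximate the geodesic $(\gamma_-,\gamma_+)$ bounding the crown by geodesics joining pairs of ideal vertices on the two accumulating ends of a single complementary region, and then pass to the limit.

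I would first fix a lift $\mathcal R\subset\H^2$ of the given crown, together with the hyperbolic element $\gamma\in\Gamma$ representing the closed geodesic that bounds the crown; by the standard structure of crowns one side of $\partial\mathcal R$ is the geodesic $(\gamma_-,\gamma_+)$, and $\mathcal R$ is stabilized by $\langle\gamma\rangle$. The rest of $\partial\mathcal R$ is a $\gamma$-invariant bi-infinite chain of ideal edges of $\wt\Lambda$ with vertex set $(x_i)_{i\in\Z}\subset\partial\H^2$. Because $\gamma$ acts on $\partial\H^2$ by north–south dynamics with source $\gamma_-$ and sink $\gamma_+$ and permutes the $x_i$, we may label them so that the cyclic order on $\partial\H^2$ reads $\gamma_-,\dots,x_{-1},x_0,x_1,\dots,\gamma_+$, and moreover $x_i\to\gamma_+$ as $i\to+\infty$ and $x_i\to\gamma_-$ as $i\to-\infty$.

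For each pair of integers $m,n\geq 1$, the tuple $(x_n,x_{n-1},\dots,x_{-m})$ is a positively oriented sequence of consecutive vertices of $\mathcal R$, so Corollary~\ref{lem:1.15} gives that the geodesic $(x_{-m},x_n)$ is $\mu$-somewhat short. As $m,n\to\infty$ one has $(x_{-m},x_n)\to(\gamma_-,\gamma_+)$ in $(\partial\H^2)^{(2)}$, and Lemma~\ref{lem:convergence of ss} then yields that $(\gamma_-,\gamma_+)$ itself is $\mu$-somewhat short.

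The only point requiring some care is the preliminary structural description of the lift $\mathcal R$—in particular the fact that its ideal vertices accumulate precisely at $\gamma_\pm$; beyond that the argument is a one-line passage to the limit, so I do not expect a serious obstacle. I would therefore spend the bulk of the proof setting up the notation for the crown lift, and then invoke Corollary~\ref{lem:1.15} and Lemma~\ref{lem:convergence of ss} in succession.
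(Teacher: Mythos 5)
Your proof is correct and follows essentially the same route as the paper: both arguments fix a lift of the crown with ideal vertices $(x_i)_{i\in\Z}$ accumulating at $\gamma_\pm$, apply Corollary~\ref{lem:1.15} to conclude that each $(x_{-m},x_n)$ is $\mu$-somewhat short, and then pass to the limit $(\gamma_-,\gamma_+)=\lim (x_{-m},x_n)$ via the closedness statement of Lemma~\ref{lem:convergence of ss}.
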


\begin{proof}

Let $\Cc\subset \Sigma$ be a crown in the complement of the lamination $\wt \Lambda$, and let $\g\in\G$ be a geodesic bounding $\Cc$.  We chose lifts to $\H^2$ in such a way that the half plane to the left of $(\gamma_+,\gamma_-)$ contains a lift $\widetilde\Cc$ of the crown $\Cc$.


\medskip
\begin{minipage}{.5\textwidth}
\begin{tikzpicture}[scale=.6]
\draw [rounded corners] 
	(1,2.5) to [out=250, in=30] (-.5,1) 	
	            to [out=190, in=40] (-3,0) 
	            to [out=230, in=130] (-3,-3)  
	            to [out=320, in=220]  (0,-3)
	            to [out=50, in= 250](1,-.5) 
	            to [out=70, in=200] (3,2);
\draw (-.5,1) to [out=270, in =185] (1,-.5);
\draw[dotted] (1,-.5) to [out=100, in=350] (-.5,1);
\draw (-2,-2) to [out=330, in= 260] (-1,-1);
\draw (-1.8,-1.8) to [out=100, in=150] (-1.2,-1.2);
\draw[rounded corners] (1,2.5) to [out=270, in=120] (1.1,2) to [out=30, in=233] (2.8,3.75);
\draw[rounded corners] (2.8,3.75) to [out=239, in=75] (1.8,2.1) to [out=10, in=220] (3, 2.7) ;
\draw[rounded corners] (3, 2.7) to [out=230, in=60] (2.4,2) to [out=360, in=350] (3,2);
\draw (.8,.8) node {$\mathcal C$};
\draw (-.3,-.3) node {$\gamma$};
\end{tikzpicture}
\end{minipage}
\begin{minipage}{.5\textwidth}
\vskip.2cm
\begin{tikzpicture}[scale=.8]
\draw (0,0) circle [radius=3];
\filldraw (-2.82,1.02) circle [radius=1pt] node[left] {$\gamma_+$};
\filldraw (2.82,1.02) circle [radius=1pt] node[right] {$\gamma_-$};
\draw (-2.82,1.02) to [out=340, in=200] (2.83,1.02);
\draw[xshift=-5.5cm, yshift=1cm, domain=354:382] plot(\x:5);
\draw[xshift=5.5cm, yshift=1cm, domain=158:186] plot(\x:5);
\filldraw (-.87,2.87) circle [radius=1pt] node[left] {\tiny{$x_{i+k}$}};
\filldraw (-.5,2.96) circle [radius=1pt] node[above] {\tiny{$x_{i+k-1}$}};;
\filldraw (.7,2.92) circle [radius=1pt];
\filldraw (.87,2.87) circle [radius=1pt];
\draw (.95, 3)  node {\tiny{$x_{i}$}};
\draw [xshift=-.685cm, yshift=2.9cm, domain=190:370] plot(\x:.16);
\draw [xshift=.1cm, yshift=2.9cm, domain=175:360] plot(\x:.6);
\draw [xshift=.778cm, yshift=2.85cm, domain=150:350] plot(\x:.075);
\draw [xshift=1.02cm, yshift=2.8cm, domain=163:343] plot(\x:.16);
\filldraw (1.17,2.76) circle [radius=1pt];
\draw [xshift=1.71cm, yshift=2.47cm, domain=150:320] plot(\x:.6);
\filldraw (2.15,2.08) circle [radius=1pt];
\draw [xshift=2.18cm, yshift=2cm, domain=100:350] plot(\x:.075);
\filldraw (2.27,1.95) circle [radius=1pt] node[right] {\tiny{$x_{i-k}$}};
\draw [xshift=2.83cm, yshift=1.01cm, domain=120:198] plot(\x:1.1);
\filldraw (1.78,.7) circle [radius=1pt] node[below] {$\gamma^{-1} p$};
\filldraw (.52,.48) circle [radius=1pt] node[below] {$p$};
\filldraw (-.52,.48) circle [radius=1pt] node[below] {$\gamma p$};
\draw (0,1.5) node {$\widetilde{\mathcal C}$};
\draw (1.2,1.5) node {$\gamma^{-1}\widetilde{\mathcal C}$};
\end{tikzpicture}
\end{minipage}

\medskip
\noindent
Then $\widetilde\Cc$ has consecutive ideal sides $(x_i,x_{i+1})$, $i\in\Z$, 
labelled in such a way that $(x_i,x_{i+1},x_{i+2})$ is positively oriented. 
Now observe that 
\bqn
(\gamma_-,\gamma_+)=\lim_{n\to\infty}(x_{-n},x_n)\,.
\eqn
By Corollary~\ref{lem:1.15} $(x_{-n},x_n)$ is $\mu$-somewhat short, so that Lemma~\ref{lem:convergence of ss} implies
that $(\gamma_-,\gamma_+)$ is $\mu$-somewhat short, which concludes the proof.
\end{proof}

We can fully understand the support of a basic geodesic current $\mu$ for which there exists a lamination $\wt \L$ of $\mu$-somewhat short geodesic without isolated leaves:

\begin{prop}\label{lem:1.17}  Let $\mu$ be a basic geodesic current, and assume that there exists a lamination $\wt \Lambda$ consisting of $\mu$-somewhat short geodesics without isolated leaves. Then the support of the geodesic current $\mu$ coincides with $\wt\Lambda$.
\end{prop}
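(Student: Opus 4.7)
The plan is to prove the two inclusions $\supp(\mu)\subseteq\widetilde\Lambda$ and $\widetilde\Lambda\subseteq\supp(\mu)$, using the structural results on complementary regions from Sections~\ref{subsec:1.0}--\ref{subsec:1.3} together with the basic hypothesis.

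For $\supp(\mu)\subseteq\widetilde\Lambda$, I argue by contradiction, assuming $(c,d)\in\supp(\mu)\setminus\widetilde\Lambda$ and producing a neighborhood of $(c,d)$ of $\mu$-mass zero. If $(c,d)$ transversally crosses some leaf $(a,b)\in\widetilde\Lambda$, a small product neighborhood of $(c,d)$ sits in $(\ioo{a}{b}\times\ioo{b}{a})\cup(\ioo{b}{a}\times\ioo{a}{b})$, which has $\mu$-mass zero by $\mu$-somewhat shortness of $(a,b)$ and flip invariance. Otherwise $(c,d)$ is a diagonal of some complementary region $R$ of $\widetilde\Lambda$, with $c,d\in\overline{R}\cap\partial\H^2$. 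I then take a product neighborhood $U_c\times U_d$ of $(c,d)$, whose factors meet no vertex of $R$ other than $c$, resp.\ $d$, and decompose each factor into the central point plus the two adjacent arcs. Each of the resulting sub-pieces either lies in the set of crossings of some boundary leaf of $R$ (hence has $\mu$-mass zero by $\mu$-somewhat shortness of that leaf) or has the form $\{v\}\times[\text{long arc}]$ for a vertex $v$ (hence vanishes by Lemma~\ref{lem:1.14} and its flip). When $c$ or $d$ is a crown accumulation point rather than an isolated vertex, the decomposition becomes countable, and $\sigma$-additivity together with Proposition~\ref{thm:1.13} preserves the conclusion.

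For $\widetilde\Lambda\subseteq\supp(\mu)$, I observe that $S:=\supp(\mu)\cap\widetilde\Lambda$ is closed and $\Gamma$-invariant in $\widetilde\Lambda$, and non-empty because basic hypothesis~(2) forces $\mu\neq 0$. For any minimal sublamination $\widetilde\Lambda_0\subseteq\widetilde\Lambda$ (there are at most two, by Proposition~\ref{thm:1.5}), $S\cap\widetilde\Lambda_0$ is closed and $\Gamma$-invariant in $\widetilde\Lambda_0$, hence by minimality either empty or all of $\widetilde\Lambda_0$. To rule out the empty case I choose a closed geodesic $c\subset\mathring\Sigma_\mu$ that crosses $\widetilde\Lambda_0$ but not the other minimal sublamination of $\widetilde\Lambda$: by the first inclusion $\supp(\mu)$ is then disjoint from the set of crossings of $c$, forcing $i(\mu,\delta_c)=0$ and contradicting basic hypothesis~(2). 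Once every minimal sublamination is contained in $\supp(\mu)$, the remaining non-minimal leaves of $\widetilde\Lambda$ are limits of minimal leaves by the no-isolated-leaves hypothesis, and hence belong to the closed set $\supp(\mu)$.

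The main obstacle I anticipate is the second case of the first inclusion at a crown accumulation point. There one has to handle both the countably many sub-pieces from the accumulating crown vertices, and any potential atomic mass of $\mu$ on the singular fibre $\{c\}\times\partial\H^2$. The first is controlled by $\sigma$-additivity and Proposition~\ref{thm:1.13}. The second is excluded by a Radon/properness argument: an atom of mass $m>0$ on such a fibre would, by $\Gamma$-invariance under the hyperbolic element associated with the crown, propagate to infinitely many atoms of equal mass accumulating within a compact subset of $(\partial\H^2)^{(2)}$, producing infinite total mass in a compact set and contradicting that $\mu$ is Radon.
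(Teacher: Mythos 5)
Your proof of the first inclusion $\supp(\mu)\subseteq\wt\Lambda$ is correct in spirit but considerably more laborious than the paper's. You build a covering of a product neighborhood of $(c,d)$ by crossing sets and vertex-fibres and estimate each piece separately via Lemma~\ref{lem:1.14}, Proposition~\ref{thm:1.13}, and a Radon-measure atom argument. The paper short-circuits all this: a geodesic $g$ in $\supp(\mu)$ cannot cross any $\mu$-somewhat short geodesic, so $g$ lies inside a complementary region $\Rr$ of $\wt\Lambda$; but then one can exhibit a \emph{diagonal} $\delta$ of $\Rr$ that crosses $g$ transversally, and $\delta$ is $\mu$-somewhat short by Corollary~\ref{lem:1.15}, contradicting $g\in\supp(\mu)$. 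Your decomposition is in effect redoing Corollary~\ref{lem:1.15} and Lemma~\ref{lem:1.14} by hand for one fixed neighborhood, and the atom argument you anticipate as the ``main obstacle'' is precisely what Lemma~\ref{lem:1.14} already handles.

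The second inclusion is where your argument has a genuine gap. The key structural observation the paper makes at the outset — and that you skip — is that $\wt\Lambda$ \emph{fills} the supporting surface $\Sigma_\mu$, and hence $\Lambda$ is \emph{minimal}: the closed geodesics bounding the crowns of $\Lambda$ are $\mu$-somewhat short by Proposition~\ref{thm:1.13}, and since $\mu$ is basic the only $\mu$-somewhat short closed geodesics in $\Sigma_\mu$ are its boundary components, so no nontrivial subsurface can be carved out. Once $\Lambda$ is minimal, $\supp(\mu)$ is a nonempty (because basicness forces $\mu\neq0$) closed $\Gamma$-invariant sublamination of $\wt\Lambda$ and therefore equals $\wt\Lambda$. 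Your alternative route has two unjustified steps. First, you cite Proposition~\ref{thm:1.5} for ``at most two minimal sublaminations,'' but that proposition concerns the closure $\overline{\Gamma(a,b)}$ of a single simple somewhat short geodesic, not an arbitrary lamination $\wt\Lambda$ of somewhat short geodesics. Second, and more seriously, you assert the existence of a closed geodesic $c\subset\mathring\Sigma_\mu$ crossing $\wt\Lambda_0$ but not the other minimal sublamination. This existence is not justified and would in fact fail if both minimal pieces were filling; the only way to guarantee such a $c$ exists is precisely to understand the complementary regions of the minimal pieces, which brings you back to the filling/minimality argument you omitted. In other words, the separating curve you invoke is a symptom of the missing lemma: once you know $\Lambda$ fills $\Sigma_\mu$, there is only one minimal component and the whole dance of choosing $c$ is unnecessary.

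Your concluding remark that ``the remaining non-minimal leaves of $\wt\Lambda$ are limits of minimal leaves by the no-isolated-leaves hypothesis'' is also not immediate: a leaf can be non-isolated on one end (spiraling into a minimal component) while being the unique leaf through a point on its other end, and the general structure theory distinguishes such leaves from leaves of the minimal pieces. Again, this complication evaporates once minimality of $\Lambda$ is established.
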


\begin{proof} Observe first that the lamination $\wt\Lambda$ fills the supporting subsurface $\Sigma_\mu$ of $\mu$: indeed $\Lambda$ is contained in $\Sigma_\mu$, and the totally geodesic subsurface filled by $\Lambda$ is bounded by the closed geodesics bounding the crowns, that are somewhat short. Since, by definition of a basic geodesic current,  the only somewhat short closed geodesic contained in $\Sigma_\mu$ are its boundary components, we get our first claims. In particular this implies that $\Lambda$ is minimal.

Since $\mu$ is basic, if a geodesic $g$ belongs to $\supp(\mu)$, then $\p_\Sigma(g)$ is contained in the interior of the supporting surface $\Sigma_\mu$. First we show that $\supp(\mu)\subset\wt\Lambda$.  
By contradiction let us assume that there is $g\in\supp(\mu)$, but $g\notin\wt\Lambda$.
Observe that since $g\in\supp(\mu)$, it cannot intersect transversally a $\mu$-somewhat short geodesic.
Together with $g\notin\wt\Lambda$, this implies that $g$ is contained in a complementary region $\Rr$.
As $g$ cannot be a ``side'' of $\Rr$, the region $\Rr$ has at least four vertices 
and $g$ is either a diagonal of $\Rr$ or connects a vertex of $\Rr$ to a cusp or to an endpoint of a boundary geodesic $\gamma$ of $\Sigma_\mu$.
In any case we can find a diagonal, say $\delta$, of $\Rr$ intersecting $g$ transversally.
By Corollary~\ref{lem:1.15} $\delta$ is $\mu$-somewhat short, which leads to a contradiction.
Thus $\supp(\mu)\subset\wt\Lambda$ is a $\Gamma$-invariant sublamination of $\wt\Lambda$ and 
hence it coincides with $\wt\Lambda$ by minimality of $\Lambda$.
\end{proof}

We finish this subsection by establishing two lemmas that will imply the continuity statement alluded to 
after Theorem~\ref{thm:main_currents}.

\begin{lem}\label{lem:1.18}  Let $\Lambda\subset\Sigma$ be a minimal geodesic lamination that is not a closed
geodesic and let $\wt\Lambda\subset\H^2$ be its lift to $\H^2$.
\be
\item For every $(a,b)\in\wt\Lambda$ there exists a sequence $(a_n,b_n)\in\wt\Lambda$ with 
$\lim_n(a_n,b_n)=(a,b)$, $a_n\neq a$ and $b_n\neq b$ for every $n\neq1$.
\item For every positively oriented $(a,c,d)\in(\deH^2)^3$,
the intersection $\{a\}\times\ioo{c}{d}\cap\wt\Lambda$ is either empty or it consists
of one leaf or of two leaves that are the consecutive sides of a complementary region of $\wt\Lambda$.
\ee
\end{lem}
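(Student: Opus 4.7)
The plan is to prove (1) using the density of the $\Gamma$-orbit of any leaf in $\wt\Lambda$ provided by minimality, and then to deduce (2) from (1) by a Pasch-type crossing argument in $\H^2$.

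For (1), the starting observation is that since $\Lambda$ is minimal and not reduced to a closed geodesic, no leaf of $\wt\Lambda$ is the axis of an element of $\Gamma$; in particular for $\wt g = (a,b) \in \wt\Lambda$ the stabilizer $\Gamma_{\wt g}$ is trivial, and $\Gamma_a \cap \Gamma_b = \{e\}$, since a common nontrivial fixer would be hyperbolic with axis $\wt g$ and would project to a closed leaf. By minimality the orbit $\Gamma \cdot \wt g$ is dense in $\wt\Lambda$, so one can choose $\gamma_n \in \Gamma \setminus \{e\}$ with $\gamma_n \wt g \to \wt g$; after passing to a subsequence and possibly swapping the two endpoints we obtain $\gamma_n a \to a$ and $\gamma_n b \to b$. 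The key point is to rule out $\gamma_n \in \Gamma_a$ along a subsequence: the orbit $\Gamma_a \cdot b \subset \partial\H^2 \setminus \{a\}$ is discrete and accumulates only at the fixed points of the cyclic group $\Gamma_a$, so $\gamma_n b \to b$ would force $\gamma_n b = b$ eventually, hence $\gamma_n \in \Gamma_a \cap \Gamma_b = \{e\}$, contradicting $\gamma_n \neq e$. A symmetric argument rules out $\gamma_n \in \Gamma_b$, and after one further subsequence $(a_n, b_n) := (\gamma_n a, \gamma_n b)$ has all the required properties.

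For (2), assume for contradiction that there exist three leaves $\ell_i = (a, b_i) \in \wt\Lambda$ with $b_i \in \ioo{c}{d}$ and $b_2$ between $b_1$ and $b_3$ in the cyclic order of $\ioo{c}{d}$. Applying (1) to $\ell_2$ yields leaves $\wt g_n = (a_n, b_n) \to \ell_2$ with $a_n \neq a$ and $b_n \neq b_2$. Since leaves of $\wt\Lambda$ are pairwise disjoint, $\wt g_n$ cannot cross $\ell_2$, so, after passing to a subsequence, all $\wt g_n$ lie in one open half-plane bounded by $\ell_2$; say the one containing $\ell_1$. Both $a_n$ and $b_n$ then lie in the closed arc of $\partial\H^2$ from $a$ to $b_2$ containing $b_1$; but $a_n$ close to $a$ and $b_n$ close to $b_2$ sit on opposite sides of $b_1$ in this arc, forcing $(a_n, b_n)$ to cross $\ell_1 = (a, b_1)$ for large $n$, contradicting the lamination property. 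The case of the $\ell_3$-side is symmetric. When the intersection has exactly two elements $\ell_1, \ell_2$, the same argument shows that the open wedge they bound at $a$ contains no further leaf of $\wt\Lambda$ ending at $a$, so a small neighborhood of $a$ inside this wedge sits in a single complementary region of $\wt\Lambda$, of which $\ell_1$ and $\ell_2$ are two consecutive sides at the ideal vertex $a$.

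The main obstacle is the case analysis in (1), where the possibility that $a$ or $b$ is a fixed point of a parabolic or hyperbolic element of $\Gamma$ has to be handled by exploiting the discreteness of the resulting cyclic orbit on $\partial\H^2$ together with the triviality of $\Gamma_a \cap \Gamma_b$. Once (1) is in hand, (2) reduces to the Pasch-style crossing argument sketched above.
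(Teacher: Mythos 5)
Your proof of part (2) is essentially the paper's: both use (1) to produce a leaf $(a_n,b_n)$ converging to the middle geodesic $(a,c_2)$ with $a_n\neq a$, and then derive a contradiction because $(a_n,b_n)$ must cross one of the outer leaves. Your additional remark about why two remaining leaves are consecutive sides of a complementary region is correct and in fact slightly more explicit than what the paper writes.

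For part (1), however, you take a genuinely different route that has a gap. The paper does \emph{not} take $\gamma_n$ applied to $(a,b)$ itself: it picks a leaf $\lambda=(e,f)\in\wt\Lambda$ whose endpoints are not $\Gamma$-equivalent to $a$ or $b$ (possible because $\wt\Lambda$ has uncountably many leaves but the $\Gamma$-orbits of $a$ and $b$ are countable), and then uses density of $\Gamma\lambda$ to approximate $(a,b)$; the conditions $a_n\neq a$, $b_n\neq b$ are then automatic. Your version, using $\gamma_n(a,b)\to(a,b)$, must handle the ``swapped'' subsequence where $\gamma_n a\to b$ and $\gamma_n b\to a$. Writing ``possibly swapping the two endpoints'' and then arguing about $\gamma_n\in\Gamma_a$ does not address this: in the swapped case you would set $(a_n,b_n):=(\gamma_n b,\gamma_n a)$, and then $a_n=a$ is the event $\gamma_n b=a$ (i.e.\ $a$ lies in the $\Gamma$-orbit of $b$), which is not the event $\gamma_n\in\Gamma_a$, and is not ruled out by the discreteness of $\Gamma_a\cdot b$ near $b$. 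This case can in fact be excluded — if $\gamma_n b=a$ for infinitely many $n$, then $\gamma_n\gamma_1^{-1}\in\Gamma_a$ gives infinitely many distinct powers $\tau^{k_n}$ with $\tau^{k_n}(\gamma_1 a)\to b$, forcing $b$ to be a fixed point of $\tau$ and hence $(a,b)$ to be a closed leaf — but you need to say this; as written, the swap case is simply not covered. The paper's choice of a generic auxiliary leaf avoids the whole issue and is the cleaner route.
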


\begin{proof}  (1) Pick a leaf $\lambda=(e,f)$ in $\wt\Lambda$ such that
neither $e$ nor $f$ is $\Gamma$-equivalent to $a$ or $b$.
Since the projection of $\lambda$ is dense in $\Lambda$, there exists a sequence
$\gamma_n\in\Gamma$ with 
\bqn
\lim_n\gamma_n(e,f)=(a,b)\,.
\eqn
Now set $a_n=\gamma_ne$ and $\b_n=\gamma_nf$.

\medskip
\noindent
(2) Assume that $\{a\}\times\ioo{c}{d}\cap\wt\Lambda$ contains three leaves, say
$\lambda_i=(a,c_i)$, for $i=1,2,3$ with $c_2\in\ioo{c_1}{c_3}$.
Now pick a sequence $(a_n,b_n)$ as in (1) with $\lim_n(a_n,b_n)=(a,c_2)$.
For $n$ large enough we will have $b_n\in\ioo{c_1}{c_3}$ and $a_n\in\ioo{c_3}{c_1}$ while $a_n\neq a$ for all $n\geq1$.
But this implies for such an $n$ that $(a_n,b_n)$ crosses either $(a,c_1)$ or $(a,c_3)$,
that is a contradiction.
\end{proof}

\begin{lem}\label{lem:1.19} Let $\mu$ be a basic geodesic current whose support is 
a lamination $\wt\Lambda$. 
Let $a\in\deH^2$ and $I\subset\deH^2$ be an interval such that $a\notin\overline{I}$.  
Then 
\bqn
\mu(\{a\}\times I)=0\,.
\eqn
\end{lem}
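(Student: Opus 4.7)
The strategy is to reduce the estimate to finitely many atoms and then kill each of them using Lemma~\ref{lem:1.14}. Since $\supp(\mu)=\wt\Lambda$, the mass $\mu(\{a\}\times I)$ is concentrated on the leaves of $\wt\Lambda$ with one endpoint at $a$ and the other in $I$. Because $a\notin\overline{I}$, we may enclose $I$ in an open arc $\ioo{c}{d}$ with $(a,c,d)$ positively oriented; Lemma~\ref{lem:1.18}\,(2) then ensures that $(\{a\}\times I)\cap\wt\Lambda$ contains at most two leaves, and if it is empty we are done.

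Otherwise, since we are in the setting of Theorem~\ref{thm:main_currents}, the lamination $\Lambda$ is minimal, not a closed geodesic, and compactly supported in $\Sigma_\mu$; this combination rules out crowns and forbids any leaf from ending at a parabolic fixed point, so that every ideal vertex of a complementary region of $\wt\Lambda$ in $\H^2$ is the meeting point of exactly two geodesic sides. Consequently $a$ is a vertex of a unique complementary region $\mathcal R$, and the two leaves $(a,c_1),(a,c_2)$ of $\wt\Lambda$ ending at $a$ are the two sides of $\mathcal R$ meeting there. Order them so that $(c_1,a,c_2)$ is positively oriented and apply Lemma~\ref{lem:1.14} with middle vertex $b=a$: one gets $\mu(\{a\}\times\icc{c_2}{c_1})=0$, where $\icc{c_2}{c_1}$ is the closed arc from $c_2$ to $c_1$ not containing $a$. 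Since $(\{a\}\times I)\cap\wt\Lambda\subset\{(a,c_1),(a,c_2)\}\subset\{a\}\times\icc{c_2}{c_1}$, this yields $\mu(\{a\}\times I)\leq\mu(\{a\}\times\icc{c_2}{c_1})=0$.

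The main obstacle is the structural claim that at any endpoint of a leaf of $\wt\Lambda$ exactly two leaves meet: without it one cannot directly invoke Lemma~\ref{lem:1.14}, whose conclusion requires two sides of a complementary region meeting at a common vertex. This 0-or-2 dichotomy uses all three hypotheses in Theorem~\ref{thm:main_currents}\,(3): minimality (to force every leaf to be dense and exclude isolated leaves), the lamination not reducing to a single closed geodesic (to exclude crowns, whose bounding geodesic would otherwise be a closed leaf and collapse $\Lambda$ to that geodesic), and compactness of the carrier in $\Sigma_\mu$ (to prevent leaves from reaching cusps, the only place where a single leaf could end at a given ideal point).
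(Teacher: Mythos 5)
Your argument takes a genuinely different route from the paper's in the key step, but it has a gap. After reducing to finitely many leaves via Lemma~\ref{lem:1.18}\,(2), the paper disposes of them with a separate, elementary observation: since $\Lambda$ is minimal and not a closed geodesic, $\mu(\{\lambda\})=0$ for every leaf $\lambda\in\wt\Lambda$ (the $\Gamma$-orbit of a leaf with a positive atom would accumulate at $\lambda$, violating local finiteness of the Radon measure). Together with $\supp(\mu)=\wt\Lambda$ and the at-most-two-leaves bound, the result is immediate. You instead try to use Lemma~\ref{lem:1.14} to show the atoms vanish, and this only works when $a$ is an ideal vertex of a complementary region.

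The gap is the one-leaf case. Lemma~\ref{lem:1.18}\,(2) allows $(\{a\}\times I)\cap\wt\Lambda$ to consist of exactly one leaf $(a,c_1)$, and nothing forces such a leaf to be a boundary leaf of a complementary region. In a typical minimal lamination (e.g.\ the stable lamination of a pseudo-Anosov, whose complementary regions are ideal triangles) the endpoints of boundary leaves form a countable set, whereas the endpoints of leaves form an uncountable set; so generically $a$ is \emph{not} a vertex of any complementary region, and your ``Consequently $a$ is a vertex of a unique complementary region $\mathcal R$'' does not follow. In that situation Lemma~\ref{lem:1.14} is not applicable and your argument does not show $\mu(\{(a,c_1)\})=0$. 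You would need to add the atomlessness observation above to cover this case, at which point the two-leaf argument via Lemma~\ref{lem:1.14} becomes superfluous.
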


\begin{proof}  We start out observing that since $\Lambda$ is minimal and not a closed geodesic,
then $\mu(\{\lambda\})=0$ for every $\lambda\in\wt\Lambda$.
Next, let $c,d\in\deH^2$ with $I\subset\ioo{c}{d}$ and $a\notin\icc{c}{d}$.
Then 
\bqn
\mu(\{a\}\times I)\leq\mu(\{a\}\times\ioo{c}{d})\,.
\eqn
Now apply the preceding lemma. Since $\{a\}\times\ioo{c}{d}\cap\wt\Lambda$ consists of at most two leaves, and the support of the geodesic current $\mu$ coincides with $\wt \Lambda$, we get the desired result.
%
\end{proof}

We deduce immediately the following:

\begin{cor}\label{cor:1.20} Under the hypothesis of Lemma \ref{lem:1.19} the map
\bqn
(a,b,c,d)\mapsto\mu(\icc{a}{b}\times\icc{c}{d})
\eqn
is continuous on the set of positive $4$-tuples of points in $\deH^2$.
\end{cor}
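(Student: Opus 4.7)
The plan is to deduce continuity from the vanishing of $\mu$ on the topological boundary of the closed rectangle $R_{a,b,c,d}:=\icc{a}{b}\times\icc{c}{d}\subset(\deH^2)^{(2)}$. First I would decompose this boundary into the union of the four sides
$$\{a\}\times\icc{c}{d},\quad \{b\}\times\icc{c}{d},\quad \icc{a}{b}\times\{c\},\quad \icc{a}{b}\times\{d\},$$
and use the positivity of the 4-tuple $(a,b,c,d)$ to conclude that $\icc{a}{b}$ and $\icc{c}{d}$ have disjoint closures, so in particular $a,b\notin\icc{c}{d}$ and $c,d\notin\icc{a}{b}$. Lemma~\ref{lem:1.19} then applies directly to the first two sides, and to the last two sides after invoking the flip-invariance of $\mu$ (which swaps the two factors of $(\deH^2)^{(2)}$); hence each side, and therefore the entire boundary $\partial R_{a,b,c,d}$, is $\mu$-null.

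Next I would invoke the standard measure-theoretic fact that if a Radon measure assigns measure zero to the boundary of a compact set $R$, then $R'\mapsto\mu(R')$ is continuous at $R$ under a suitable Hausdorff-type convergence of the arguments. Concretely, given any sequence of positive 4-tuples $(a_n,b_n,c_n,d_n)\to(a,b,c,d)$ and any $\varepsilon>0$, inner and outer regularity of $\mu$ combined with $\mu(\partial R_{a,b,c,d})=0$ yield an open neighborhood $U$ of $R_{a,b,c,d}$ and a compact subset $K$ of the interior of $R_{a,b,c,d}$ with $\mu(U\setminus K)<\varepsilon$. For $n$ sufficiently large one has $K\subset R_{a_n,b_n,c_n,d_n}\subset U$, which gives $|\mu(R_{a_n,b_n,c_n,d_n})-\mu(R_{a,b,c,d})|<\varepsilon$.

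I expect no substantial obstacle: once Lemma~\ref{lem:1.19} is in place, the argument is essentially measure-theoretic bookkeeping. The only minor subtlety is passing from the two sides of $\partial R_{a,b,c,d}$ whose first coordinate is a singleton to the two sides whose second coordinate is a singleton; this requires the flip-invariance built into the very definition of a geodesic current and is automatic.
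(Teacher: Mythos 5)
Your argument is correct and is precisely the standard portmanteau-type deduction that the paper leaves implicit (it gives no proof, stating only ``We deduce immediately the following''). Decomposing the topological boundary of $\icc{a}{b}\times\icc{c}{d}$ into its four edges, applying Lemma~\ref{lem:1.19} together with flip-invariance to show each edge is $\mu$-null, and then using inner/outer regularity to sandwich nearby rectangles is exactly the intended reasoning.
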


\subsection{Inequalities on intersections, systoles}\label{subsec:1.4}
Let, as usual, $\Sigma=\Gamma\backslash\H^2$ be a complete finite area hyperbolic surface,
$\Gamma<\PSL(2,\bR)$.  

In order to conclude the proof of Theorem \ref{thm:main_currents} we will need a good understanding of the behaviour of the intersection function $i(\mu,c)$
under certain surgeries consisting in reducing the self-intersection number of a closed geodesic $c$. Let $p\in c$ be a self-intersection point of the geodesic $c$.

Let $\gamma\in\pi_1(\Sigma,p)$ be a representative of $c$.  Let us parametrize $\gamma$ by $g:S^1\to\Sigma$.
Then $g^{-1}(p)=\{t_1,\dots, t_\ell\}$ with $\ell\geq2$, where we assume that $t_1,\dots,t_\ell$ are positively oriented in $S^1$.
Define $\gamma_2=g|_{\icc{t_1}{t_2}}$ and $\gamma_3=g|_{\icc{t_2}{t_1}}$; let $\gamma_1:=\gamma^{-1}_2\gamma_3$.

\medskip
\begin{center}
\begin{figure}[h]\label{fig:1}
\begin{tikzpicture}
\draw (0,0) .. controls (1.5,1) and (3,1) .. (3,0);
\draw (3,0) .. controls (3,-1) and (1.5,-1) .. (0,0);
\draw (0,0) .. controls (-1.5,1) and (-3,1) .. (-3,0);
\draw (-3,0) .. controls (-3,-1) and (-1.5,-1) .. (0,0);
\filldraw (0,0) circle [radius=1pt];
\draw (2.2,0) node {\small{$\gamma_2$}};
\draw (-2.2,0) node {\small{$\gamma_3$}};
%
\draw[rounded corners, scale=1.3] (-3,0) .. controls (-3,1) and (-1.5,1) .. (0,0.2) .. controls (1.5,1) and (3,1) .. (3,0);
\draw[rounded corners, scale=1.3] (-3,0) .. controls (-3,-1) and (-1.5,-1) .. (0,-0.2) .. controls (1.5,-1) and (3,-1) .. (3,0);
\draw (2.5,-1.3) node {{$\gamma_1=\gamma_2^{-1}\gamma_3$}};
\draw (0,.9) node {{$\gamma=\gamma_2\gamma_3$}};
\draw (3,0) node [rotate=270] {\tiny{$<$}};
\draw (-3,0) node [rotate=270] {\tiny{$<$}};
\draw (3.9,0) node [rotate=90] {\tiny{$<$}};
\draw (-3.9,0) node [rotate=270] {\tiny{$<$}};
\end{tikzpicture}
\caption{The three curves obtained from $c$ resolving the self intersection at $p$.}
\end{figure}
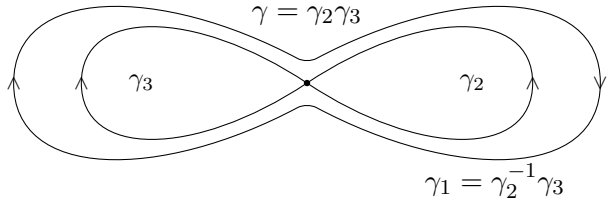
\end{center}

We have:

\begin{prop}\label{prop:1.23}  If $\Sigma$ is not the thrice punctured sphere,
then one of $\gamma_1,\gamma_2$ or $\gamma_3$ must be hyperbolic.  
For such a $\gamma_i$ we have:
\be
\item $i(\mu,\delta_{\gamma_i})\leq i(\mu,\delta_{\gamma})$;
\item The self-intersection number of any $\gamma_i$ is strictly smaller than the one of $\gamma$.
\ee
\end{prop}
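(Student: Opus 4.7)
I plan to address the three assertions in order.

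\emph{Hyperbolicity.} Arguing by contradiction, suppose none of $\gamma_1,\gamma_2,\gamma_3$ is hyperbolic. First, $\gamma_2,\gamma_3$ are non-trivial: if, say, $\gamma_2=e$, the geodesic arc representing $\gamma_2$ would lift to $\H^2$ as a geodesic segment from a preimage of $p$ to itself, hence of zero length. If $\gamma_1=e$, then $\gamma_2=\gamma_3$ and $\gamma=\gamma_2^2$, so the hyperbolicity of $\gamma$ forces $\gamma_2$ to be hyperbolic, a contradiction. So $\gamma_1,\gamma_2,\gamma_3$ are all parabolic. Consider the subgroup $G:=\langle\gamma_2,\gamma_3\rangle\subseteq\Gamma$: since $\Gamma$ is torsion-free and every $2$-generated subgroup of a surface group is free, $G$ is free; its rank is exactly $2$ because $\gamma_2,\gamma_3$ fix distinct points on $\partial\H^2$ (a common fixed point would render $\gamma=\gamma_2\gamma_3$ parabolic). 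In $G\cong F_2$, the cyclically reduced words $\gamma_2,\gamma_3,\gamma_2^{-1}\gamma_3$ are primitive and pairwise non-conjugate, so they represent three distinct (unoriented) parabolic conjugacy classes in $G$. Hence the complete hyperbolic surface $G\backslash\H^2$ has at least three cusps; combined with $\chi(G\backslash\H^2)=\chi(F_2)=-1$, the identity $-1=2-2g-b-c$ with $c\geq 3$ forces $g=b=0$, so $G\backslash\H^2$ is the thrice-punctured sphere and $G$ is a lattice of covolume $2\pi$. The inclusion $G\subseteq\Gamma$ yields $\mathrm{area}(\Sigma)=2\pi/[\Gamma:G]$; since any torsion-free finite-area hyperbolic surface has area at least $2\pi$ by Gauss--Bonnet, $[\Gamma:G]=1$, so $\Sigma=G\backslash\H^2$ is the thrice-punctured sphere, contradicting the hypothesis.

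\emph{Intersection inequality.} Given the hyperbolic $\gamma_i$, represent it by a piecewise geodesic loop built from the arcs associated to $\gamma_2^{\pm1}$ and $\gamma_3^{\pm 1}$ (with the appropriate orientations). Following \cite{Martone_Zhang}, straightening such a piecewise geodesic to its geodesic representative cannot increase the ``broken'' intersection with $\mu$, since straightening only removes transverse crossings with geodesics in $\supp(\mu)$. Since the arcs of the piecewise geodesic representing $\gamma_i$ are subarcs of $c$ (possibly traversed in reverse, which does not affect transverse crossings), its broken intersection with $\mu$ is bounded by $i(\mu,\delta_\gamma)$, giving $i(\mu,\delta_{\gamma_i})\leq i(\mu,\delta_\gamma)$.

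\emph{Self-intersection.} The surgery at $p$ removes that single crossing, so the piecewise geodesic representative of $\gamma_i$ has strictly fewer self-intersections than $c$; straightening to the geodesic representative can only further reduce this count (by the bigon criterion). The main obstacle is the hyperbolicity claim, which hinges on the classification of $2$-generated Fuchsian groups with three inequivalent primitive parabolic classes together with the Gauss--Bonnet lower bound on area.
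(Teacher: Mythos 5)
Your proposal is essentially correct. A structural observation first: the paper does not give an argument for Proposition~\ref{prop:1.23} at all---it simply cites \cite[Proposition~4.6]{Martone_Zhang}. So the comparison is really between your argument and that reference, and you have chosen to make the hyperbolicity claim self-contained while citing \cite{Martone_Zhang} for parts (1) and (2), which is a reasonable division of labour.

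Your Gauss--Bonnet argument for the hyperbolicity claim is clean and correct. The chain of deductions is: (i) $\gamma_2,\gamma_3$ are non-trivial and, if $\gamma_1=e$, then $\gamma=\gamma_2^2$ would already force $\gamma_2$ hyperbolic; (ii) if all three are parabolic, $G=\langle\gamma_2,\gamma_3\rangle$ is free of rank $2$ because the generators have distinct fixed points (a common fixed point would make $\gamma=\gamma_2\gamma_3$ non-hyperbolic); (iii) the abelianization images $(1,0),(0,1),(-1,1)$ being pairwise linearly independent shows the three parabolic fixed points lie in pairwise distinct $G$-orbits, hence $G\backslash\H^2$ has at least three cusps; (iv) $\chi(F_2)=-1$ then pins the convex core down to a thrice-punctured sphere with no funnel boundary, so $G$ is a lattice of covolume $2\pi$; (v) $\mathrm{area}(\Sigma)=2\pi/[\Gamma:G]\geq 2\pi$ forces $[\Gamma:G]=1$. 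One small point worth making explicit in step (iii): it is not enough that $\gamma_2,\gamma_3,\gamma_1$ are pairwise non-conjugate---for the cusp count you need the parabolic fixed points to lie in distinct $G$-orbits. Your abelianization argument in fact gives this stronger conclusion (two of them being powers of a common parabolic would make their images in $\Z^2$ proportional), but as written the proposal only records non-conjugacy of the elements; it would be worth spelling out the slightly stronger statement.

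For parts (1) and (2) your sketch is in the spirit of \cite{Martone_Zhang}, and the monotonicity statement---that replacing the broken geodesic by its geodesic representative does not increase the intersection with $\mu$, and strictly decreases the number of transverse self-crossings---is precisely what Proposition~4.6 of that reference establishes. Since you cite it, this is fine, but it should be understood that the substantive content of (1) and (2) really does live in that reference: the ``broken intersection'' needs to be defined via lifts and Lemma~\ref{lem:1.21}, and the comparison requires that a geodesic separating the endpoints of $\gamma_i$ at infinity must cross the broken lift at least as often as it crosses the straight axis. Without that, the phrase ``straightening only removes transverse crossings'' is an intuition rather than a proof.
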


For the proof see \cite[Proposition~4.6]{Martone_Zhang}.  We use this to draw the following
conclusion concerning the systole of $\mu$.

\begin{cor}\label{cor:1.24} If $\Sigma$ is not the thrice punctured sphere, then 
\bqn
\Syst(\mu)=\inf\{i(\mu,\delta_\gamma):\,\gamma\in\Gamma,\,\gamma\text{ is hyperbolic and simple}\}\,.
\eqn
\end{cor}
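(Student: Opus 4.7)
The inequality $\Syst(\mu)\leq\inf\{i(\mu,\delta_\gamma):\gamma\in\Gamma\text{ hyperbolic and simple}\}$ is immediate from Definition~\ref{defn:systole}, since the right-hand side is an infimum over a subset of the family indexing the systole. The plan is therefore to establish the reverse inequality by showing that any hyperbolic $\gamma\in\Gamma$ can be replaced by a simple hyperbolic element whose $\mu$-intersection does not exceed $i(\mu,\delta_\gamma)$.

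The argument will proceed by induction on the self-intersection number of the closed geodesic $c\subset\Sigma$ associated to $\gamma$. If $c$ is already simple there is nothing to prove. Otherwise, pick any self-intersection point $p\in c$ and apply Proposition~\ref{prop:1.23}: using the hypothesis that $\Sigma$ is not the thrice punctured sphere, at least one of the three resolutions $\gamma_1,\gamma_2,\gamma_3$ obtained at $p$ is hyperbolic, and for such a choice $\gamma_i$ we have both
\[
i(\mu,\delta_{\gamma_i})\leq i(\mu,\delta_\gamma)
\]
and strict decrease of the self-intersection number. Iterating this construction produces a finite sequence of hyperbolic elements with strictly decreasing (non-negative integer) self-intersection numbers, so after finitely many steps we reach a simple hyperbolic element $\eta\in\Gamma$ with
\[
i(\mu,\delta_\eta)\leq i(\mu,\delta_\gamma).
\]

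Taking the infimum over all hyperbolic $\gamma\in\Gamma$ then yields
\[
\inf\{i(\mu,\delta_\gamma):\gamma\in\Gamma\text{ hyperbolic and simple}\}\leq\Syst(\mu),
\]
and combined with the obvious reverse inequality this gives the claim. The only non-routine ingredient is Proposition~\ref{prop:1.23}, which is already available; the exclusion of the thrice punctured sphere is used precisely there, since on that surface every hyperbolic element is (freely homotopic to) a power of a boundary curve and the resolutions may fail to yield a hyperbolic element. The special case of the thrice punctured sphere is handled separately in Section~\ref{subsec:thrice}.
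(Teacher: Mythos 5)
Your proof is correct and follows the same route as the paper: iterate Proposition~\ref{prop:1.23} to strictly decrease the self-intersection number while not increasing $i(\mu,\delta_\cdot)$, terminating at a simple hyperbolic element. The paper states this in one line and leaves the trivial reverse inequality implicit, but the content is identical.
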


\begin{proof}  Indeed, given any $\gamma$ hyperbolic with self-intersection,
by iterating Proposition~\ref{prop:1.23} we can find $\eta$ hyperbolic and simple with 
$i(\mu,\delta_\eta)\leq i(\mu,\delta_\gamma)$.
\end{proof}

From this we deduce:

\begin{prop}\label{prop:1.25} Let $\mu$ be a basic geodesic current. Assume that the supporting surface $\Sigma_\mu$ is not the thrice-punctured sphere and $\Syst_{\Sigma_\mu}(\mu)=0$.
Then there exists $(a,b)\in(\deH^2)^{(2)}$ which is simple, $\mu$-somewhat short and recurrent.
\end{prop}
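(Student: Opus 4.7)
The plan is to realize the sought geodesic $(a,b)$ as a leaf of a measured lamination obtained as a limit of rescaled simple closed curves produced by the vanishing subsurface systole.

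First, I would produce a sequence of simple non-peripheral hyperbolic elements $\gamma_n\in\Gamma_\mu$ with $i(\mu,\delta_{\gamma_n})\to 0$: starting from any non-peripheral hyperbolic sequence witnessing $\Syst_{\Sigma_\mu}(\mu)=0$ and iteratively applying the self-intersection reduction of Proposition~\ref{prop:1.23}, as in the proof of Corollary~\ref{cor:1.24}---which is valid precisely because $\Sigma_\mu$ is not the thrice-punctured sphere---one reaches simple hyperbolic representatives with no larger $\mu$-intersection. The basic assumption on $\mu$ guarantees $i(\mu,\delta_{\gamma_n})>0$, so the geodesics $c_n:=\p_\Sigma(\gamma_n)$ are pairwise distinct simple non-peripheral closed curves in $\mathring{\Sigma_\mu}$; since only finitely many simple closed geodesics have bounded length, $\ell(c_n)\to\infty$.

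Second, since simple non-peripheral closed geodesics in $\mathring{\Sigma_\mu}$ remain in a fixed compact subset $K\subset\mathring{\Sigma_\mu}$ (they stay outside Margulis cusp neighborhoods and, by the collar lemma applied to $\partial\Sigma_\mu$, away from the boundary components), the rescaled currents $\nu_n:=\delta_{c_n}/\ell(c_n)$ all lie in the compact space $\CcK(\Sigma)_{\calL}$. Extracting a subsequence, $\nu_n\to\nu$ with $i(\calL,\nu)=1$ by Bonahon continuity, so $\nu\neq 0$; and $i(\nu,\nu)=\lim_n i(\delta_{c_n},\delta_{c_n})/\ell(c_n)^2=0$ since $c_n$ is simple, so $\nu\in\MLc(\Sigma)$ is a non-trivial measured lamination with carrier in $K$. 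Moreover $i(\mu,\nu)=\lim_n i(\mu,\delta_{c_n})/\ell(c_n)=0$.

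Third, any leaf $(a,b)\in\supp(\nu)$ is simple, recurrent, and $\mu$-somewhat short. It is simple because it is a leaf of a geodesic lamination. It is recurrent because $\supp(\nu)\subset K$ projects into a fixed compact subset of $\mathring{\Sigma_\mu}$, so the endpoints of $(a,b)$ cannot be fixed by a peripheral element of $\Gamma_\mu$ (whose associated geodesics escape to a cusp or to a boundary component). For somewhat-shortness: if $(a,b)$ were transversely crossed by some $(c,d)\in\supp(\mu)$, then by openness of transverse intersection in $(\deH^2)^{(2)}\times(\deH^2)^{(2)}$ one would find small product neighborhoods $U\times V$ of $\big((a,b),(c,d)\big)$ in $\supp(\nu)\times\supp(\mu)$ contained in the open transversality set $\mathcal D$ and embedding in a fundamental domain for the $\Gamma$-action on $\mathcal D$, yielding $i(\mu,\nu)\geq(\mu\times\nu)(U\times V)>0$, contradicting $i(\mu,\nu)=0$.

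The most delicate point is the first step: Proposition~\ref{prop:1.23} only guarantees that \emph{one} of the three pieces $\gamma_1,\gamma_2,\gamma_3$ produced by resolving a self-intersection is hyperbolic, not that this hyperbolic piece is non-peripheral, so tracking non-peripherality through the reduction requires care. An alternative route, should this prove too subtle, is to allow possibly non-simple $\gamma_n$, extract a limit axis $(a,b)$ directly from the fundamental-domain estimate of Lemma~\ref{lem:1.21} (which works because $\ell(c_n)\to\infty$ lets any fixed boundary rectangle fit into a single fundamental domain of $\gamma_n$, forcing $\mu$ of the rectangle to be bounded by $i(\mu,\delta_{\gamma_n})\to 0$), and then invoke Proposition~\ref{lem:simple_ss_geod} to pass to a simple $\mu$-somewhat short representative; recurrence is then enforced by insisting that the axes all be lifted through a fixed compact lift of $K$ in $\H^2$.
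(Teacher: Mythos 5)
Your proof is correct, and it takes a genuinely different route in the crucial last step. The paper also starts from Corollary~\ref{cor:1.24} to produce simple closed geodesics $c_k$ with $i(\mu,\delta_{\gamma_k})\to 0$ (and, just as you do, uses $i(\mu,\delta_{\gamma_k})>0$ to force $\ell(c_k)\to\infty$ and the compactness of the set of simple non-peripheral geodesics in $\mathring\Sigma_\mu$ to control where the axes accumulate). But from there the paper works directly with a Hausdorff limit of the \emph{axes}: it conjugates so the axes pass through a fixed fundamental domain, passes to a subsequential limit $(a,b)$, and verifies $\mu(\ioo{a}{b}\times\ioo{b}{a})=0$ by an explicit inclusion argument in the spirit of Lemma~\ref{lem:1.21}, bounding the $\mu$-mass of any box $\ioo{w}{x}\times\ioo{y}{z}$ straddling $(a,b)$ by $i(\mu,\delta_{\gamma_k})$. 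You instead rescale to get $\nu_n=\delta_{c_n}/\ell(c_n)$, use compactness of $\CcK(\Sigma)_{\calL}$ and Bonahon continuity of $i$ to extract a nonzero $\nu\in\MLc(\Sigma)$ with $i(\mu,\nu)=0$, and conclude that a leaf of $\supp(\nu)$ does the job via the openness of the transversality set $\mathcal D$ and the definition of the intersection pairing. Both are valid; your route is more structural (it essentially reproves, in this special case, the implication $(3)\Rightarrow(1)$ of Theorem~\ref{thm:intersection}), while the paper's is more hands-on and avoids the measured-lamination machinery.

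The delicate point you flag---that iterating Proposition~\ref{prop:1.23} from a non-peripheral $\gamma\in\Gamma_\mu$ could in principle terminate at a simple hyperbolic element that is peripheral (a component of $\partial\Sigma_\mu$), which would have $i(\mu,\delta)=0$ and be useless for extracting a recurrent limit---is a real one, and the paper also glosses over it: the paper simply asserts that the sequence from Corollary~\ref{cor:1.24} has $i(\mu,\delta_{\gamma_k})>0$. You are right to note it needs an argument that the reduction can be steered to a non-peripheral simple curve. On your alternative fallback (dropping simplicity of the $\gamma_n$): be careful that non-simple closed geodesics in $\mathring\Sigma_\mu$ need not stay in a fixed compact subset (a non-simple geodesic can spiral arbitrarily close to $\partial\Sigma_\mu$), so ``insisting that the axes be lifted through a fixed compact lift of $K$'' does not by itself secure recurrence of the limit; the collar-lemma compactness you invoke really does rely on the $c_n$ being simple, which is why both you and the paper reduce to simple curves first.
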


\begin{proof}  By Corollary~\ref{cor:1.24}, there is a sequence $(\gamma_k)_{k\geq1}$ 
of hyperbolic elements in $\Gamma$ representing simple closed geodesics and such that 
\bqn
\lim_ki(\mu,\delta_{\gamma_k})=0\,.
\eqn
Since $i(\mu,\delta_{\gamma_k})>0$ for every $k>1$, we must have for the sequence
of hyperbolic lengths $\ell(\gamma_k)$, that $\lim_k\ell(\gamma_k)=\infty$.
Passing to a subsequence, and possibly conjugating the elements $\gamma_k$ so that their axis intersect a fixed fundamental region, we may assume that
\bqn
\lim_k(\gamma_{k,-},\gamma_{k,+})=(a,b)\,,
\eqn
where $(a,b)$ is simple as well, non-isolated, and hence neither $a$ nor $b$ can be cusps or fixed points of boundary elements.\\
\begin{minipage}{.5\textwidth}
We now show that $(a,b)$ is $\mu$-somewhat short.  
Let $(x,a,y,z,b,w)$ be positive.  
Let $V_a$ and $V_b$ be closed neighbourhoods respectively of $a$ and $b$,
such that 
\bqn
\{x,y,z,w\}\cap(V_a\cup V_b)=\varnothing\,.
\eqn
For $k$ large enough we have that $\gamma_{k,-}\in V_a$ and $\gamma_{k,+}\in V_b$.
\end{minipage}
\begin{minipage}{.5\textwidth}
\medskip
\begin{center}
\begin{tikzpicture}[scale=.8]
\draw (0,0) circle [radius=2];
\filldraw (-1,1.732) circle [radius=1pt] node[above left] {$x$};
\filldraw (-1.732,-1) circle [radius=1pt] node[left] {$y$};
\filldraw (1.732,-1) circle [radius=1pt] node[right] {$z$};
\filldraw (0,2) circle [radius=1pt] node[above] {$w$};
\draw[blue, very thick] (-1.732,1) arc[start angle=150, end angle=200, radius=2];
\draw[blue] (-1.7,0) node {$V_a$};
\draw[blue, very thick] (1,1.732) arc[start angle=420, end angle=350, radius=2];
\draw[blue] (1.5,.6) node {$V_b$};
\filldraw[blue] (-1.88,-.68) circle [radius=1pt];
\filldraw[blue] (-1.732,1) circle [radius=1pt];
\filldraw[blue] (1,1.732) circle [radius=1pt];
\filldraw[blue] (1.97, -.35) circle [radius=1pt];
\filldraw (-1.9,.62) circle [radius=1pt] node[left] {$a$};
\filldraw (-2,0) circle [radius=1pt] node[left] {$\gamma_{k,-}$};
\filldraw (2,0) circle [radius=1pt] node[right] {$b$};
\filldraw (1.732,1) circle [radius=1pt] node[above right] {$\gamma_{k,+}$};
\end{tikzpicture}
\end{center}
\end{minipage}

\medskip

Since $\lim_k\ell(\gamma_k)=\infty$, this implies that 
\bqn
\gamma_ky\in\ioo{z}{w}
\eqn
for $k$ large enough, and hence, again for large $k$,
\bqn
      \mu(\ioo{w}{x}\times\ioo{y}{z})
\leq\mu(\ioo{\gamma_{k,+}}{\gamma_{k,-}}\times\ioc{y}{\gamma_ky})
   =i(\mu,\delta_{\gamma_k})\,.
\eqn
This implies that $\mu(\ioo{w}{x}\times\ioo{y}{z})=0$,
and hence $(a,b)$ is $\mu$-somewhat short.
\end{proof}
\subsection{The thrice punctured sphere case}\label{subsec:thrice}
The first step in the proof of Theorem \ref{thm:main_currents} consists of the specific case of subsurfaces isomorphic to pairs of pants. It is well known that such subsurfaces do not support any non-trivial compactly supported measured lamination. However there are many interesting geodesic currents on pairs of pants: for example the theory of maximal and Hitchin representations on a pair of pants is quite rich
. The aim of this section is to prove the following restatement of Theorem \ref{thm:main_currents} in the case of 3-punctured spheres:

\begin{prop}\label{prop_intro:prop1}
Let $\Sigma=\Sigma_{0,3}$ be a 3-punctured sphere. For every non-vanishing geodesic current $\mu$ on $\Sigma$, ${\rm Syst}(\mu)$ is positive.
\end{prop}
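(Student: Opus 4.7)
I argue by contradiction, supposing that $\mu\neq 0$ on $\Sigma_{0,3}$ satisfies $\mathrm{Syst}(\mu)=0$, and aiming to deduce $\mu=0$. First, as in the proof of Proposition~\ref{prop:1.25} (but without access to Corollary~\ref{cor:1.24}, which rests on Proposition~\ref{prop:1.23} and is unavailable for $\Sigma_{0,3}$), I produce a $\mu$-somewhat short geodesic $(a,b)$ with $a\neq b$: pick hyperbolic $\gamma_n\in\Gamma$ with $i(\mu,\delta_{\gamma_n})\to 0$. Either some fixed $\gamma$ attains the value $0$ and its axis is $\mu$-somewhat short by Lemma~\ref{lem:1.22}, or by discreteness of $\Gamma$ the lengths $\ell(\gamma_n)\to\infty$; after conjugating each $\gamma_n$ so that its axis meets a fixed compact subset of a fundamental domain, the limit $(a,b)=\lim_n(\gamma_{n,-},\gamma_{n,+})$ is $\mu$-somewhat short by Lemmas~\ref{lem:1.21} and~\ref{lem:convergence of ss}.

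Next I would classify $(a,b)$ using the special geometry of $\Sigma_{0,3}$. Every essential simple closed curve on $\Sigma_{0,3}$ is peripheral, and the thrice-punctured sphere admits no non-trivial compactly supported measured lamination. After simplifying $(a,b)$ via Proposition~\ref{lem:simple_ss_geod} if necessary, Proposition~\ref{thm:1.5} rules out a recurrent non-closed projection, since its closure would be such a non-existent minimal lamination. Hence $(a,b)$ is either a hyperbolic axis $(\eta_-,\eta_+)$ for some non-peripheral $\eta\in\Gamma$, or $(a,b)$ has cusps as endpoints and projects to a simple arc between two cusps.

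In the hyperbolic-axis case, $\eta$ is non-peripheral so $c_\eta$ self-intersects on $\Sigma_{0,3}$, providing $\delta\in\Gamma$ with $\delta(\eta_-,\eta_+)\pitchfork(\eta_-,\eta_+)$; both axes are $\mu$-somewhat short by $\Gamma$-invariance, and Lemma~\ref{lem:ss-surj} yields four new $\mu$-somewhat short geodesics. Iterating with $\Gamma$-translates, chaining crossings via Lemma~\ref{lem:2.2}, and assembling closed loops from these $\mu$-somewhat short segments to invoke Proposition~\ref{prop:L(rho(gamma))=0}, one builds up $\mu$-somewhat shortness for a dense family of axes. By Lemma~\ref{lem:convergence of ss} every geodesic is then $\mu$-somewhat short: any two disjoint open arcs $A,B\subset\partial\H^2$ can be enclosed in complementary intervals $I_{(x,y)}$ and $I_{(y,x)}$, so $\mu(A\times B)\le\mu(I_{(x,y)}\times I_{(y,x)})=0$, and since such rectangles cover $(\partial\H^2)^{(2)}$ we conclude $\mu=0$, a contradiction.

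In the cusp-to-cusp case, $(a,b)$ projects to a simple arc $\alpha$ and $\supp(\mu)$ lies in the complement of $\Gamma(a,b)$ in $\mathbb H^2$, whose components are lifts of the punctured-disk $\Sigma_{0,3}\setminus\alpha$. The vanishing-systole hypothesis supplies many hyperbolic elements with small $\mu$-intersection; by varying the conjugation used in the first step, I would produce a second $\mu$-somewhat short geodesic $(a',b')$ transverse to $(a,b)$, after which Lemma~\ref{lem:ss-surj} together with Proposition~\ref{prop:L(rho(gamma))=0} reduces the problem to the hyperbolic-axis case. The main obstacle I expect is exactly this transversality step: the $\Gamma$-orbit of a single cusp-to-cusp lift of $\alpha$ consists of pairwise disjoint geodesics, so one must genuinely exploit the fact that $\mathrm{Syst}(\mu)=0$ furnishes hyperbolic axes in many different asymptotic directions, forcing some additional $\mu$-somewhat short geodesic to cross $(a,b)$ and thereby triggering the iterative machinery of the hyperbolic-axis case.
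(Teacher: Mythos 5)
Your argument correctly covers the first half of the dichotomy (some closed geodesic $c$ with $i(\mu,\delta_c)=0$): since any non-peripheral closed geodesic on $\Sigma_{0,3}$ fills, Proposition~\ref{prop:L(rho(gamma))=0} forces $i(\mu,\delta_\gamma)=0$ for every hyperbolic $\gamma$, and density of axis pairs in $(\partial\H^2)^{(2)}$ together with Lemma~\ref{lem:convergence of ss} and $\sigma$-additivity then gives $\mu=0$. That part mirrors the paper's first case. The genuine gap is exactly the one you flag yourself: when $i(\mu,\delta_\gamma)>0$ for all hyperbolic $\gamma$ yet $\mathrm{Syst}(\mu)=0$, the limiting $\mu$-somewhat short geodesic $(a,b)$ cannot be an axis (else $i(\mu,\delta_\eta)=0$ for a hyperbolic $\eta$), and by Propositions~\ref{lem:simple_ss_geod} and~\ref{thm:1.5} it cannot be recurrent either (no non-closed minimal lamination lives on $\Sigma_{0,3}$). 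So you are necessarily in the cusp-to-cusp case, where the $\Gamma$-orbit of $(a,b)$ is pairwise disjoint and none of the transversality machinery (Lemmas~\ref{lem:ss-surj}, \ref{lem:2.2}, Proposition~\ref{prop:L(rho(gamma))=0}) applies. You offer no mechanism to produce a second $\mu$-somewhat short geodesic crossing $(a,b)$, and I do not see how to extract one purely from the limiting construction: conjugating $\gamma_n$ so that its axis meets $(a,b)$ near a fixed point only guarantees a limit geodesic passing through a point of $(a,b)$, which may well coincide with $(a,b)$ itself, and there is no a priori reason for the limit to be transverse. This is not a loose end; it is the whole difficulty the proposition is about.

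The paper bypasses the geometric classification entirely. It works with the minimizing sequence $\gamma_n$ directly and applies the surgery operation of Proposition~\ref{prop:1.23} at self-intersection points. The hyperbolicity clause of Proposition~\ref{prop:1.23} fails as stated on $\Sigma_{0,3}$, which is precisely what Lemma~\ref{lem:a} is designed to repair: whenever two of the three resolved loops at a self-intersection are peripheral, the third is forced to be conjugate to one of $(ab^{-1})^{\pm1}$, $(bc^{-1})^{\pm1}$, $(ca^{-1})^{\pm1}$. Iterating, one may assume each $\gamma_n$ has a single self-intersection, hence lies in this finite list of conjugacy classes, so the sequence $i(\mu,\delta_{\gamma_n})$ takes finitely many values and cannot tend to $0$ without vanishing. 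This combinatorial route avoids having to know anything about the shape of the $\mu$-somewhat short limit geodesic, and in particular never confronts the cusp-to-cusp obstruction. To complete your proof you would essentially need to rediscover this algebraic ingredient; the geometric approach alone does not close the case you left open.
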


In the whole section we let $\Gamma$ be the fundamental group of the 3-punctured sphere, $\Gamma:=\pi_1(\Sigma_{0,3})$,
with presentation
\bqn
\Gamma=\langle a,b,c:\,abc=e\rangle\,.
\eqn
An element $\g\in\Gamma$ is \emph{peripheral} if it is conjugated to either $a^k$, $b^k$ or $c^k$ for some $k\in\Z$, $k\neq0$.

\begin{lem}\label{lem:a}
Let $\g_1,\g_2\in\Gamma$. Assume that $\g_1\g_2$ is not peripheral, but $\g_1,\g_2$ and $\g_1\g_2^{-1}$ are. 
Then $\gamma_1$, $\gamma_2$ and $\gamma_1\gamma_2^{-1}$ represent distinct primitive boundary components
and $\g_1\g_2$ is conjugated to one of the following: $(ab^{-1})^{\pm1}$, $(bc^{-1})^{\pm1}$, $(ca^{-1})^{\pm1}$.
\end{lem}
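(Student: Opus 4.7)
The plan is to work entirely in the free group $\G \cong F(a,b)$ (using $c = (ab)^{-1}$ to eliminate $c$) and combine an abelianization argument with cyclic-normal-form computations in the free group. Let $\phi : \G \to \Z^2$ denote the abelianization, with $\phi(a) = e_1$, $\phi(b) = e_2$, $\phi(c) = -e_1-e_2$; the images of peripheral elements lie on one of three lines $L_a, L_b, L_c$ through the origin spanned by $e_1, e_2, -e_1-e_2$ respectively.

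The main step is to show that $\phi(\g_1)$ and $\phi(\g_2)$ lie on \emph{distinct} peripheral lines. Suppose instead both lay on $L_a$, so $\g_i = g_i^{-1}a^{m_i}g_i$ with $m_i \neq 0$. If $m_1 = m_2$, then $\g_1\g_2^{-1}$ has trivial abelianization and, being peripheral, must be trivial (peripheral elements of $\G$ have nonzero abelianization), so $\g_1 = \g_2$ and $\g_1\g_2 = \g_1^2$ is peripheral, a contradiction. Otherwise let $h = g_1g_2^{-1}$; conjugating $\g_1\g_2^{-1}$ by $g_2$ yields $h^{-1}a^{m_1}ha^{-m_2}$, which by the abelianization constraint must be conjugate to $a^{m_1-m_2}$. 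Writing $h = a^p h' a^q$ with $h'$ either trivial or starting and ending in $b^{\pm 1}$, a direct cyclic reduction yields the cyclic normal form $a^{-m_2}(h')^{-1}a^{m_1}h'$, of cyclic length $|m_1|+|m_2|+2|h'|$; comparing with the cyclic length $|m_1-m_2|$ of $a^{m_1-m_2}$, and splitting on whether $m_1,m_2$ have the same or opposite signs, forces $h'$ to be trivial, hence $h \in \langle a\rangle$. But then $h$ commutes with $a^{m_1}$, giving $\g_1\g_2 \sim a^{m_1+m_2}$ peripheral, again contradicting the hypothesis.

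After applying an automorphism of $\G$ permuting $\{[a],[b],[c]\}$, we may therefore assume $\phi(\g_1) = (m_1,0) \in L_a$ and $\phi(\g_2) = (0,m_2) \in L_b$. Then $\phi(\g_1\g_2^{-1}) = (m_1,-m_2)$ must lie on $L_a \cup L_b \cup L_c$, and since $m_1,m_2 \neq 0$ only $L_c$ is possible, yielding $m_1 = -m_2 =: m$. Hence $\g_1 \sim a^m$, $\g_2 \sim b^{-m}$, and $\g_1\g_2^{-1} \sim c^{-m} = (ab)^m$, which already represent three distinct boundary components; it remains to show primitivity, namely $|m|=1$. Conjugate so that $\g_1 = a^m$ and write $\g_2 = g^{-1}b^{-m}g$, normalizing $g$ modulo left multiplication by $\langle b\rangle$ so that either $g = 1$ or $g$ begins with $a^{\pm 1}$. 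A cyclic-reduction analysis of $a^m g^{-1}b^m g$, splitting into cases according to whether $g$ ends in an $a$-block or a $b$-block, yields cyclic length $2|m| + 2(|g|-|s|)$, where $|s|$ is the length of the trailing $a$-block of $g$ (set to $0$ if $g$ ends in a $b$-block or $g=1$); equating this with the cyclic length $2|m|$ of $(ab)^m$ forces $g \in \langle a\rangle$, so $g = a^s$ for some $s$. Then $\g_1\g_2^{-1} \sim a^m b^m$, and a comparison of cyclic words shows $a^m b^m$ is conjugate to $(ab)^m$ only for $|m|=1$.

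With $|m|=1$, the three classes $\g_1, \g_2, \g_1\g_2^{-1}$ are primitive and distinct, and $\g_1\g_2 = a^m\cdot a^{-s}b^{-m}a^s \sim a^m b^{-m} = (ab^{-1})^{\pm 1}$; the remaining two entries $(bc^{-1})^{\pm 1}$ and $(ca^{-1})^{\pm 1}$ arise from the symmetric cases where $\g_1$ is conjugate to a power of $b$ or of $c$. The main obstacle is the cyclic-reduction bookkeeping in both the same-line exclusion and the primitivity argument, which requires careful tracking of cancellations at the endpoints of the conjugating words $h$ and $g$ based on whether their first and last letters are $a^{\pm 1}$ or $b^{\pm 1}$; once this is handled the combinatorial conclusion $|m|=1$ falls out immediately.
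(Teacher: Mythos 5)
Your proof is correct and takes essentially the same approach as the paper's: work in the free group $F(a,b)$, use the abelianization to constrain which boundary classes $\gamma_1,\gamma_2,\gamma_1\gamma_2^{-1}$ can represent, and then compare cyclic-reduction lengths, with your ``same line'' vs.\ ``distinct lines'' dichotomy matching the paper's two cases $\gamma_2=wa^lw^{-1}$ (ruled out) and $\gamma_2=wb^lw^{-1}$ (forcing $k=\pm1$). The bookkeeping differs only cosmetically --- your left-normalization of the conjugator $g$ modulo $\langle b\rangle$ plays the role of the paper's decomposition $w=a^sw_0b^t$, and you spell out the cyclic-length count and the primitivity conclusion a bit more explicitly --- but the underlying argument is the same.
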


\begin{proof}
Without loss of generality we can assume that $\g_1=a^k$ and that,
applying if necessary an automorphism fixing $a$ and exchanging $b$ and $c$, 
either we have $\g_2=wb^lw^{-1}$ or $\g_2=wa^lw^{-1}$ for some word $w$ in $a$ and $b$. 

We focus on the first case first. Since $\g_1\g_2^{-1}$ is peripheral, and $k$  and $l$ are non zero, 
we deduce, projecting to the abelianization $\Z^2$ of $\Gamma$, that $k=-l$ and $\g_1\g_2^{-1}=a^kwb^{k}w^{-1}$ is conjugated to $(ab)^k$. 
This implies that the cyclically reduced expression for $\g_1\g_2^{-1}$ is $(ab)^k$. 
We write $w=a^sw_0b^t$ where $w_0$ is either trivial or begins with a power of $b$ and ends with a power of $a$. 
The expression  $\g_1\g_2^{-1}=a^kw_0b^{k}w_0^{-1}$ is cyclically reduced. 
Hence $k=\pm1$ which means that $\g_1\g_2$ is conjugated to $(ab^{-1})^{\pm1}$.

We conclude the proof by showing that the second case cannot happen. 
We argue by contradiction.  
Looking, once more, at the abelianization, we would deduce that $\g_1\g_2^{-1}$ is conjugated to $a^{k-l}$. 
The word $w$ cannot be a power of $a$, since otherwise $\g_1\g_2=a^{k+l}$ would be peripheral. 
Hence we can write $w$ as $a^mw_0 a^n$ with $w_0$ beginning and ending with a power of $b$. 
However this implies that $\g_1\g_2^{-1}=a^kw_0a^{-l}w_0^{-1}$. 
This is absurd since such an expression is cyclically reduced, 
but we know that $\g_1\g_2^{-1}$ is conjugated to a power of $a$.
\end{proof}

\begin{proof}[Proof of Proposition~\ref{prop_intro:prop1}]
Assume that ${\rm Syst}(\mu)=0$. 
If there exists $\gamma\in\Gamma$ non peripheral with $i(\mu,\delta_\g)=0$, 
it follows from Proposition \ref{prop:L(rho(gamma))=0} that for every $\g\in\G$, $i(\mu,\delta_\gamma)=0$ and we are done. 

Let us then assume by contradiction that no non-peripheral element has $i(\mu,\delta_\g)=0$, 
but there is a sequence $\{\g_n\}_{n\in\N}$ of non-peripheral elements with $i(\mu,\g_n)<1/n$.
Combining Lemma~\ref{lem:a} and Proposition~\ref{prop:1.23} we can assume that $\g_n$ has a single self-intersection: 
since $\g_n$ is by assumption non-peripheral, $\g_n$ cannot be simple, if it has more than one self intersection points, 
we can choose one such a self-intersection point $p$ and deduce from Lemma~\ref{lem:a} 
that at least one of the three loops obtained resolving the intersection at $p$ is non-peripheral. 
Moreover it follows from Proposition~\ref{prop:1.23} that the intersection of such a loop with the geodesic current $\mu$ is not bigger than the one of $\g_n$.

The contradiction follows from the fact that every loop in $\Sigma_{0,3}$ with a single self-intersection is conjugated 
to one of $(ab^{-1})^{\pm1}$, $(bc^{-1})^{\pm1}$, or $(ca^{-1})^{\pm1}$, and therefore the sequence is eventually constant. 
\end{proof}

\subsection{Completion of the proof of Theorem~\ref{thm:main_currents}}\label{subsec:1.5}
As a consequence of Proposition \ref{prop_intro:prop1} we can and will assume that $\Sigma_\mu$ is not a trice punctured sphere.

\eqref{item:main_currents(1)}$\Rightarrow$\eqref{item:main_currents(2)}.  
This is a consequence of Proposition~\ref{prop:1.25}.

\medskip
\noindent
\eqref{item:main_currents(2)}$\Rightarrow$\eqref{item:main_currents(3)}.
Let us assume that $(a,b)$ is $\mu$-somewhat short and  recurrent.
We can apply Proposition~\ref{lem:simple_ss_geod} and find $(a',b')$ $\mu$-somewhat short,  simple and recurrent.  The closure of $\p_\Sigma((a',b'))$ in $\Sigma$ contains
then a maximal sublamination $\wt\Lambda$ that is not a closed geodesic.  Finally we deduce from
Proposition~\ref{thm:1.13} that $\wt \Lambda$ is surface filling and from Proposition~\ref{lem:1.17} that the support of $\mu$ coincides with $\wt \Lambda$.

\medskip
\noindent
\eqref{item:main_currents(3)}$\Rightarrow$\eqref{item:main_currents(1)}.  
Let $\Lambda$ be this minimal lamination and $m$ the transverse invariant measure defined by $\mu$.
Since the lamination is minimal and not reduced to a closed geodesic, there is, for each $\epsilon>0$,
a geodesic segment $\sigma_\epsilon\subset\Sigma$ transverse to $\Lambda$ 
with $\lambda(\sigma_\epsilon)<\epsilon$.  
Given $x\in\sigma_\epsilon$, the concatenation of the subsegment of $\Lambda$ between $x$ 
and the first return to $\sigma_\epsilon$ gives a closed loop
and an element $\gamma\in\Gamma$ with $i(\mu,\delta_\gamma)<\epsilon$.

\section{Currents with positive systole}\label{sec:pos}
The main objective of this section is to prove Theorem \ref{thm_intro:posSyst} in the introduction. This rests on the following characterization of currents with vanishing systole. Recall that $\MLc (\Sigma)$ is the space of currents with compact carrier and vanishing self intersection, and $\CcK(\Sigma)$ is the space of currents with carrier in a compact subset $K\subset \Sigma$.
\begin{thm}\label{thm:intersection}
Let $K\subset \Sigma$ be a compact subset such that $\MLc (\Sigma)\subset\CcK(\Sigma)$. For a current $\mu\in\mathcal C(\Sigma)$ the following are equivalent
\begin{enumerate}
\item $\Syst(\mu)=0$;
\item the function $\lambda\mapsto i(\mu,\lambda)$ admits a zero on $\MLc(\Sigma)\setminus\{0\}$;
\item the function $\lambda\mapsto i(\mu,\lambda)$ admits a zero on $\CcK(\Sigma)\setminus\{0\}$;
\end{enumerate}
\end{thm}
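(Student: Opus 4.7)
The plan is to establish the cycle of implications $(1) \Rightarrow (2) \Rightarrow (3) \Rightarrow (1)$. The implication $(2) \Rightarrow (3)$ is trivial from the standing hypothesis $\MLc(\Sigma) \subseteq \CcK(\Sigma)$, so the real content is in the other two steps.

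For $(1) \Rightarrow (2)$, I would start from $\Syst(\mu) = 0$ and use Corollary~\ref{cor:1.24} to produce a sequence of hyperbolic and simple elements $\gamma_n \in \Gamma$ with $i(\mu, \delta_{\gamma_n}) \to 0$; the thrice-punctured sphere case is vacuous by Proposition~\ref{prop_intro:prop1}, which forbids vanishing systole there. Each $\delta_{\gamma_n}$ lies in $\MLc(\Sigma) \subseteq \CcK(\Sigma)$, so the normalizations $\nu_n := \delta_{\gamma_n}/\ell(\gamma_n)$ belong to the intersection $\MLc(\Sigma) \cap \CcK(\Sigma)_\calL$, which is compact by the compactness statements in Section~\ref{sec:preliminaries} combined with the closedness of $\MLc(\Sigma)$. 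After extracting a convergent subsequence $\nu_n \to \lambda$ with $\lambda \in \MLc(\Sigma) \setminus \{0\}$, Bonahon continuity of $i$ on $\mathcal{C}(\Sigma) \times \CcK(\Sigma)$ yields $i(\mu, \lambda) = \lim i(\mu, \delta_{\gamma_n})/\ell(\gamma_n) = 0$, using that $\ell(\gamma_n)$ is bounded below by the hyperbolic systole of $\Sigma$ while the numerator tends to zero.

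For $(3) \Rightarrow (1)$, I would proceed by contradiction. Given $\nu \in \CcK(\Sigma) \setminus \{0\}$ with $i(\mu, \nu) = 0$, the first step is a Fubini-type computation showing that every $(a,b) \in \supp(\nu)$ is $\mu$-somewhat short: writing $i(\mu, \nu)$ as the $\mu \otimes \nu$-mass of a Borel fundamental domain for the diagonal $\Gamma$-action on the space of transversely intersecting pairs, Fubini gives that for $\nu$-almost every $(a,b)$ the $\mu$-measure of transverse companions with intersection point in a chosen fundamental domain for $\Gamma$ on $\H^2$ vanishes, and the $\Gamma$-invariance of $\mu$ and $\nu$ then upgrades this statement to $\mu(\ioo{a}{b} \times \ioo{b}{a}) = 0$ for $\nu$-a.e.\ $(a,b)$. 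Since the set of $\mu$-somewhat short geodesics is closed by Lemma~\ref{lem:convergence of ss}, this condition extends to all of $\supp(\nu)$. Now, since $\carrier(\nu) \subseteq K$ is compact and $\nu \neq 0$, there exists $(a,b) \in \supp(\nu)$ whose projection to $\Sigma$ lies entirely in $K$; in particular, neither endpoint is fixed by a parabolic element, so $(a,b)$ is recurrent. Assuming for contradiction that $\Syst(\mu) > 0$, no closed geodesic is $\mu$-somewhat short, so $\calE_\mu = \varnothing$; Theorem~\ref{thm:decBP} then reduces the decomposition of $\mu$ to a single basic component with $\Sigma_\mu = \Sigma$, and Theorem~\ref{thm:main_currents} applied to $\mu$ forces the nonexistence of a recurrent $\mu$-somewhat short geodesic, contradicting the existence of $(a,b)$.

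The main obstacle I anticipate is the Fubini-type disintegration used in $(3) \Rightarrow (1)$: while the heuristic is immediate, one must carefully select a Borel fundamental domain for the diagonal $\Gamma$-action on transversely intersecting pairs and exploit the $\Gamma$-invariance of both $\mu$ and $\nu$ to pass from the fundamental-domain identity to the full $\mu$-somewhat short statement $\mu(\ioo{a}{b}\times\ioo{b}{a}) = 0$. Once this reduction is in place, the remainder of the argument simply assembles the structural results of Sections~\ref{subsec:1.0}--\ref{subsec:1.5}, and in particular the equivalences in Theorem~\ref{thm:main_currents}.
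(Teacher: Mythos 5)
Your proof is correct, and both non-trivial implications take genuinely different routes than the paper. For $(1)\Rightarrow(2)$, the paper argues by cases: if some hyperbolic $\gamma$ already has $i(\mu,\delta_\gamma)=0$, then iterating Proposition~\ref{prop:1.23} (via Corollary~\ref{cor:1.24}) produces a simple such $\gamma$, whose current is the desired $\lambda$; otherwise $\mu$ is basic with $\Sigma_\mu=\Sigma$ and Theorem~\ref{thm:main_currents} forces the support of $\mu$ to be a lamination, so $\lambda=\mu$ itself works. Your compactness argument---normalizing simple closed geodesics by hyperbolic length, using the compactness of $\MLc(\Sigma)\cap\CcK(\Sigma)_{\calL}$ and Bonahon continuity---avoids this case split and handles both situations uniformly, at the cost of needing a lower bound on the hyperbolic systole of $\Sigma$ (which holds for any fixed complete finite-area surface). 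This is a genuine simplification of the logical structure, and it mirrors the argument the paper itself uses for Corollary~\ref{cor:open} and the subsequent corollary.

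For $(3)\Rightarrow(1)$, however, the Fubini disintegration you flag as the main obstacle is heavier machinery than the paper needs. The paper fixes a single $(a,b)\in\supp(\lambda_0)$, chooses an arbitrary positively oriented $6$-tuple $(x,a,y,z,b,t)$, and observes that $\ioo{x}{y}\times\ioo{z}{t}$ has positive $\lambda_0$-mass while every geodesic in $\ioo{y}{z}\times\ioo{t}{x}$ meets every geodesic in $\ioo{x}{y}\times\ioo{z}{t}$ transversally; since that product of boxes is covered by finitely many $\Gamma$-translates of the fundamental domain for $\mathcal D$, vanishing of $i(\mu,\lambda_0)$ forces $\mu(\ioo{y}{z}\times\ioo{t}{x})=0$, and letting $x,y\to a$, $z,t\to b$ gives somewhat-shortness of $(a,b)$ by $\sigma$-additivity. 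This avoids the measure-theoretic gymnastics of choosing a Borel fundamental domain, a null exceptional set, and upgrading an a.e.\ statement by $\Gamma$-invariance and Lemma~\ref{lem:convergence of ss}. Your version does go through, but the ``upgrade'' step should be handled with care: you need countably many translates of the fundamental domain in $\H^2$ to exhaust $\H^2$, and you conclude a.e., then use closedness of the somewhat-short set and density of the a.e.\ set in $\supp(\nu)$; the paper's direct box argument applies to \emph{every} $(a,b)\in\supp(\lambda_0)$ without passing through an a.e.\ statement. Finally, your concluding contradiction (assume $\Syst(\mu)>0$, deduce $\mu$ is basic with $\Sigma_\mu=\Sigma$ via Theorem~\ref{thm:decBP}, then contradict Theorem~\ref{thm:main_currents}) is a valid and slightly more explicit rendering of the paper's terse ``which implies $\Syst(\mu)=0$ by Theorem~\ref{thm:main_currents}''; both implicitly use that $\mu$ non-basic already gives $\Syst(\mu)=0$.
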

\begin{proof}
We may assume that $\mu\neq 0$.

\medskip
\noindent
(1)$\Rightarrow$(2). If $i(\mu,\delta_\g)=0$ for some $\g\in\G$ hyperbolic, then (see Corollary \ref{cor:1.24}) there is $\eta\in\Gamma$ representing a simple closed geodesic with $i(\mu,\delta_\eta)=0$ and (2) holds. If, instead, $i(\mu,\delta_\g)>0$ for every hyperbolic $\g\in\G$, then it follows from Theorem \ref{thm:main_currents} that $i(\mu,\mu)=0$, which, again, shows (2).
 
 \medskip
\noindent
(2)$\Rightarrow$(3). This is clear by the choice of $K$.

\medskip
\noindent
(3)$\Rightarrow$(1). Let $\l_0\in\CcK(\Sigma)\setminus\{0\}$ with $i(\mu,\l_0)=0$, and fix $(a,b)\in\supp(\l_0)$. 
Since $\p_\Sigma((a,b)) \subset K $ this implies in particular that the geodesic $(a,b)$ is recurrent. 
Let $x,y,z,t\in\partial\H^2$ be such that $(x,a,y,z,b,t)$ is positively oriented. 
Then $\ioo{x}{y}\times \ioo{z}{t}$ is a neighbourhood of $(a,b)$ hence $\l_0(\ioo{x}{y}\times \ioo{z}{t})>0$. 
But then it follows from $i(\mu,\l_0)=0$ that $\mu(\ioo{y}{z}\times \ioo{t}{x})=0$. 
This implies that $\mu(\ioo{a}{b}\times \ioo{b}{a})=0$ hence $(a,b)$ is a $\mu$-somewhat short geodesic which implies $\Syst(\mu)=0$ by Theorem \ref{thm:main_currents}.
\end{proof}

Let now $\l_0\in\CcK(\Sigma)$ be a surface filling current with compact support. Then 
$$\mathcal C(\Sigma)_{\l_0}:=\{\mu\in\mathcal C(\Sigma)|\; i(\mu,\l_0)=1\}$$
is compact and so is 
$$\MLc(\Sigma)_{\l_0}=\MLc(\Sigma)\cap \mathcal C(\Sigma)_{\lambda_0}.$$
\begin{cor}\label{cor:open}
The subset 
$$\Syst(0)=\{\mu\in\mathcal C(\Sigma)|\;\Syst(\mu)=0\}$$
is closed.
\end{cor}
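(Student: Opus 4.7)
The plan is to deduce closedness directly from the characterization in Theorem~\ref{thm:intersection} together with the compactness of $\MLc(\Sigma)_{\l_0}$ and the continuity of the intersection pairing. Fix a sequence $(\mu_n)_{n\geq 1}$ in $\Syst(0)$ converging to some $\mu\in\mathcal C(\Sigma)$; I want to show $\Syst(\mu)=0$. Applying Theorem~\ref{thm:intersection} to each $\mu_n$, I obtain a sequence $\alpha_n\in\MLc(\Sigma)\setminus\{0\}$ with $i(\mu_n,\alpha_n)=0$.

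The first step is a normalization. Since $\l_0$ is surface filling and $\alpha_n\neq 0$ has compact carrier, every geodesic in $\supp(\alpha_n)$ transversally meets some geodesic in $\supp(\l_0)$, and a neighbourhood argument at such an intersection shows $i(\alpha_n,\l_0)>0$. I may therefore rescale and assume $\alpha_n\in\MLc(\Sigma)_{\l_0}$; note that the rescaled $\alpha_n$ still satisfies $i(\mu_n,\alpha_n)=0$. Since $\MLc(\Sigma)_{\l_0}$ is compact (by the lemma invoked immediately before the corollary), after passing to a subsequence I may assume $\alpha_n\to\alpha$ with $\alpha\in\MLc(\Sigma)_{\l_0}$; in particular $i(\alpha,\l_0)=1$ so $\alpha\neq 0$.

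It remains to take the limit in the intersection equation. By hypothesis $\MLc(\Sigma)\subset\CcK(\Sigma)$ for a fixed compact $K\subset\Sigma$, so the $\alpha_n$ and $\alpha$ all lie in the common space $\CcK(\Sigma)$ on which the Bonahon intersection pairing $i:\mathcal C(\Sigma)\times\CcK(\Sigma)\to\mathbb R_{\geq 0}$ is jointly continuous. Hence
\[
i(\mu,\alpha)=\lim_{n\to\infty}i(\mu_n,\alpha_n)=0.
\]
Applying the implication (3)$\Rightarrow$(1) of Theorem~\ref{thm:intersection} to $\mu$ and the nonzero current $\alpha\in\MLc(\Sigma)\subset\CcK(\Sigma)$ yields $\Syst(\mu)=0$, so $\mu\in\Syst(0)$ and the set is closed. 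The only real subtlety is ensuring the normalization is legitimate, i.e.\ that nonzero laminations have positive intersection with the surface filling current $\l_0$; everything else is a routine compactness-plus-continuity argument.
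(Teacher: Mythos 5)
Your proof is correct and follows the same route as the paper's: extract a nonzero $\alpha_n\in\MLc(\Sigma)$ with $i(\mu_n,\alpha_n)=0$ via Theorem~\ref{thm:intersection}, normalize to the compact slice $\MLc(\Sigma)_{\l_0}$, pass to a convergent subsequence, and use continuity of $i$ plus the (3)$\Rightarrow$(1) implication of the theorem. The only difference is that you explicitly justify the normalization step (that $i(\alpha_n,\l_0)>0$), which the paper leaves implicit; this is a welcome addition, not a divergence.
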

\begin{proof}
Assume that $\mu_n\in\Syst(0)$ is a convergent sequence with limit $\mu\in \mathcal C(\Sigma)$. By Theorem \ref{thm:intersection} there is $\lambda_n\in\MLc(\Sigma)_{>0}$ with $i(\mu_n,\l_n)=0$. Passing to a subsequence we may assume that $\l_n$ converges to $\l\in\MLc(\Sigma)_{\l_0}$. By the continuity of the intersection this implies that $i(\mu,\lambda)=\lim i(\mu_n,\l_n)=0$ and since $\l\neq 0$, Theorem \ref{thm:intersection} implies $\Syst(\mu)=0$.
\end{proof}
Let $\P\Syst(0)$ denote the image of $\Syst(0)\setminus\{0\}$ in $\P(\mathcal C (\Sigma))$ and let $\Omega$ be its complement, which is open by Corollary~\ref{cor:open}. When denoting elements of $\P(\mathcal C(\Sigma))$ we will always assume that $\nu\in\mathcal C(\Sigma)_{\l_0}$.
\begin{cor}
Let $K\subset\Sigma$ be compact with $\MLc(\Sigma)\subset\CcK(\Sigma)$ and $[\mu]\in\Omega$. Then there is an open neighbourhood $V_{[\mu]}$ of $[\mu]$ and constants constants $0<c_1\leq c_2$ such that for every $[\nu]\in V_{[\mu]}$ and every geodesic $c$ contained in $K$ we have 
$$c_1\ell(c)\leq i(\nu,\delta_c)\leq c_2\ell(c).$$
\end{cor}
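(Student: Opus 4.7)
The plan is to argue by contradiction, reducing both bounds to a compactness argument inside $\CcK(\Sigma)$ combined with Theorem~\ref{thm:intersection}. Fix representatives by the continuous section $[\nu]\mapsto \nu/i(\nu,\lambda_0)$, so that convergence $[\nu_n]\to[\mu]$ in $\P\mathcal C(\Sigma)$ becomes $\nu_n\to\mu$ in $\mathcal C(\Sigma)_{\lambda_0}$. For each closed geodesic $c\subset K$ the normalized current $\delta_c/\ell(c)$ lies, up to the universal factor relating $i(\mathcal L,\delta_c)$ to $\ell(c)$, in the compact set $\CcK(\Sigma)_{\mathcal L}$; this compactness and the continuity of the Bonahon pairing $i:\mathcal C(\Sigma)\times\CcK(\Sigma)\to\R_{\geq 0}$ will be the two pillars of the argument.

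First I would dispose of the upper bound. Suppose it fails: then there exist $[\nu_n]\to[\mu]$ and closed geodesics $c_n\subset K$ with $i(\nu_n,\delta_{c_n})/\ell(c_n)\to\infty$. After passing to a subsequence, $\delta_{c_n}/\ell(c_n)$ converges to some nonzero $\lambda\in\CcK(\Sigma)_{\mathcal L}$ by compactness; continuity of intersection then gives $i(\nu_n,\delta_{c_n}/\ell(c_n))\to i(\mu,\lambda)<\infty$, a contradiction.

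For the lower bound, I would argue analogously: if it fails, we obtain $[\nu_n]\to[\mu]$ and $c_n\subset K$ with $i(\nu_n,\delta_{c_n})/\ell(c_n)\to 0$, and after extraction $\delta_{c_n}/\ell(c_n)\to\lambda\in\CcK(\Sigma)\setminus\{0\}$. By continuity $i(\mu,\lambda)=0$, and then Theorem~\ref{thm:intersection} (applied via the implication (3)$\Rightarrow$(1) on $\mu$) yields $\Syst(\mu)=0$, contradicting $[\mu]\in\Omega$. Combining the two contradictions, there is a neighbourhood $V_{[\mu]}$ of $[\mu]$ and constants $0<c_1\leq c_2$ doing the job for all $[\nu]\in V_{[\mu]}$ simultaneously.

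The main conceptual step is the lower bound, and the hard part is ensuring that the limit current $\lambda$ is both nonzero and in $\CcK(\Sigma)$; this is exactly what the compactness of $\CcK(\Sigma)_{\mathcal L}$ and the hypothesis $c\subset K$ guarantee. Once that is in place, Theorem~\ref{thm:intersection} converts the equality $i(\mu,\lambda)=0$ into the systole statement that contradicts $[\mu]\in\Omega$, which is the crux of the argument.
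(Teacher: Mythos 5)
Your proof is correct and follows the same route as the paper: normalize inside $\mathcal C(\Sigma)_{\lambda_0}$, use compactness of $\CcK(\Sigma)_{\mathcal L}$ to extract a nonzero limit $\lambda$ of $\delta_{c_n}/\ell(c_n)$, and invoke continuity of the Bonahon pairing together with Theorem~\ref{thm:intersection} to derive $\Syst(\mu)=0$ and hence a contradiction. The only cosmetic difference is that the paper writes out the lower bound and declares the upper bound analogous, while you spell out both — and correctly note that the upper bound needs only finiteness of $i(\mu,\lambda)$, not Theorem~\ref{thm:intersection}.
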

\begin{proof}
We prove the lower bound; the proof of the upper bound is completely analogous. Assume by contradiction that there exists a sequence $\nu_n$ converging to $\mu$ and a sequence of $\g_n\in\G$ with $\delta_{\g_n}\in\CcK(\Sigma)$ with 
$$\lim\frac{i(\nu_n,\delta_{\gamma_n})}{\ell(\g_n)}=0.$$
Recall that if $\mathcal L$ denotes the Liouville current, 
$$\CcK(\Sigma)_\mathcal L:=\{\lambda\in\CcK(\Sigma)|\; i(\mathcal L, \lambda)=1\}$$
is compact. Since $\delta_{\g_n}/\ell(\g_n)\in\CcK(\Sigma)_\mathcal L$, we may assume, passing to a subsequence, that  $\delta_{\g_n}/\ell(\g_n)$ converges to $\l\in \CcK(\Sigma)_\mathcal L$. But then $i(\mu,\lambda)=0$ and hence $\Syst(\mu)=0$ by Theorem \ref{thm:intersection}. This is a contradiction.
\end{proof}

\section{An application to the length spectrum
  compactification}\label{sec:5}

%
Consider the map $\calC(\Sigma)\to\R_{\geq0}^\mathscr{C}$ 
that to a current $\mu$ associates the function $c\mapsto i(\mu,c)$.
It descends to a well defined continuous map
\bqn
I:\P\calC(\Sigma)\longrightarrow\P(\R_{\geq0}^\mathscr{C})\,.
\eqn
It follows from \cite{Martone_Zhang} that both $\calL(\mathrm{Max}(\Sigma,n))$ and $\calL(\mathrm{Hit}(\Sigma,n))$
are in the image of $I$, that is $\calL(\Sigma,n):=\calL(\calX(\Sigma,n))$ is in the image of $I$.
Since $I$ is continuous and $\P\calC(\Sigma)$ is compact, 
we deduce that $\overline{\calL(\Sigma,n)}$ is in the image of $I$.
In particular for $[L]\in\partial\calL(\Sigma,n)$ 
there is a geodesic current $\mu\in\calC(\Sigma)$ such that $L(c)=i(\mu,c)$ for every $c\in\calC$.
Then Theorem~\ref{thm_intro:dec} applied to $\mu$ implies immediately Corollary~\ref{cor_intro:1.5}.

To show Corollary~\ref{cor_intro:1.6} we use Otal's result that $I$ is a homeomorphism onto its image, \cite{Otal}.
As a result we have that 
$$\Omega(\Sigma,n)=I(\Omega)\cap\partial\calL(\Sigma,n)\,,$$
where $\Omega$ is as in Theorem~\ref{thm_intro:posSyst}.  
Thus $\Omega(\Sigma,n)$ is open in $\partial\calL(\Sigma,n)$
and the statements about the length functions in $\Omega(\Sigma,n)$ follow
from the corresponding ones in Theorem~\ref{thm_intro:posSyst}.



\end{document}